\documentclass[11pt,reqno]{amsart} 
\usepackage[utf8]{inputenc} 
\usepackage[left=2.5cm,right=2.5cm,top=2.5cm,bottom=2.5cm]{geometry}
\usepackage{graphicx} 
\usepackage{epstopdf}
\usepackage{color}
\usepackage{cite}
\usepackage{mathtools}
\usepackage{xcolor}
\usepackage{tikz}

\makeatletter

\renewcommand\subsubsection{\@secnumfont}{\bfseries}%
\renewcommand\subsubsection{\@startsection{subsubsection}{3}
  \z@{.5\linespacing\@plus.7\linespacing}{-.5em}%
  {\normalfont\bfseries}}
  
  \makeatother

\usepackage{array} 
\usepackage{paralist} 
\usepackage{verbatim} 
\usepackage{subfig} 

\usepackage{esint}
\usepackage{amsfonts}
\usepackage{amsmath}
\usepackage{amsthm}
\usepackage{amssymb}
\usepackage{hyperref}
\usepackage{dsfont}

\hypersetup{
	colorlinks=true,
	linkcolor=blue, 
	citecolor=blue,
	filecolor=blue,
	urlcolor=blue,
}

\newcommand{\mel}{\MoveEqLeft}

\newtheorem{theorem}{Theorem}[section]
\newtheorem{proposition*}{Proposition\textsuperscript{*}}

\newtheorem{corollary*}{Corollary\textsuperscript{*}}

\newtheorem{lemma}[theorem]{Lemma}

\theoremstyle{definition}

\newtheorem{example*}{Example\textsuperscript{*}}

\numberwithin{equation}{section}

\usepackage{bbm}

\def\Limes#1#2 {\lim\limits_{#1\rightarrow #2}}

\def\R{\mathbb{R}}

\def\Z{\mathbb{Z}}

\newcommand{\Chi}{\mathcal{X}}

\def\XXint#1#2#3{{\setbox0=\hbox{$#1{#2#3}{\int}$ }
\vcenter{\hbox{$#2#3$ }}\kern-.59\wd0}}

\renewcommand{\epsilon}{\varepsilon}

\def\dfrac{\frac{\mathrm{d}}{\mathrm{d}t}}

\DeclareMathOperator{\BS}{BS}

\def\scalar#1#2{\langle #1,#2 \rangle}
\def\de{\partial}

\def\dx{\,\mathrm{d}x}

\def\dt{\,\mathrm{d}t}

\newcommand\minus\backslash

\newcommand\lan\langle
\newcommand\ran\rangle

\newcommand{\supp}{\operatorname{supp}}

\newcommand{\dd}{{\,\mathrm d}}

\DeclareMathOperator\dist{dist}

\newcommand{\norm}[1]{\left\lVert#1\right\rVert}

\renewcommand\leq\leqslant
\renewcommand\geq\geqslant

\title[Vortex-Wave uniqueness ]{A Uniqueness theorem for the Vortex-Wave system with non-constant vorticity near the point vortex}

 \author[D. Meyer]{David Meyer}
 \address{ \vspace{-0.4cm}
\newline 
\textbf{{\small David Meyer}} 
\vspace{0.15cm}
\newline \indent Max-Planck Institute for Mathematics in the Sciences, Leipzig, Germany}
 \email{david.meyer@mis.mpg.de}

\keywords{}
\subjclass[2020]{}

\begin{document}
\begin{abstract}
We study the uniqueness of the vortex wave system, i.e.\ the 2D incompressible Euler equations with vorticity consisting of a point vortex and a bounded part. We show that if the initial vorticity and point vortex location $(w_0,X_0)$  satisfy a constraint of the type $|w_0(x)-w_0(X_0)|=O(|x-X_0|^\alpha)$ for $\alpha> 2$ and the vorticity is in $L^\infty$, then solutions are unique, globally in time, even though the velocity is not log-Lipschitz.
\end{abstract}

\maketitle

\section{Introduction and main results}

The 2D incompressible Euler equations are one of the most fundamental models of fluid dynamics.
In vorticity form, these equations read as \begin{align}
    \de_t\omega+(u\cdot \nabla)\omega=0\quad \text{and }\quad u=\nabla^\perp\Delta^{-1}\omega \label{oiler}
\end{align}  
where $\omega:\R^2\rightarrow \R$ is the vorticity and $u:\R^2\rightarrow \R^2$ is the velocity.

We are concerned with solutions of a specific form, given by the so-called \textit{vortex-wave system}. This case consists of solutions of the form $\omega=w+\delta_{X(t)}$, i.e.\ a regular part and a point vortex.
The system is a model for the interaction of a strongly concentrated vortex, idealized into a point vortex, with the surrounding fluid.

For such data, the velocity is given by \begin{align*}
    u(x)=(\nabla^\perp \Delta^{-1} w)(x)+\frac{(x-X(t))^\perp}{2\pi|x-X(t)|^2},  
\end{align*}
where $y^\perp=(-y_2,y_1)$, thanks to the explicit form of the full-space inverse Laplacian.

In general, this has a singularity at $x=X(t)$ and the transport equation for the Dirac-Delta at $X(t)$ needs to be somewhat reinterpreted. Since the velocity of the point-vortex is rotationally symmetric, it should not transport the point-vortex itself and the system should evolve according to the equations \begin{align}
    \de_t w+(u\cdot\nabla) w=0, \qquad u=\nabla^\perp \Delta^{-1} w+\frac{(x-X(t))^\perp}{2\pi|x-X(t)|^2},\qquad \dfrac X(t)=(\nabla^\perp \Delta^{-1} w)(X(t)).\label{vw} 
\end{align}
Even with these modifications, such data is still far too singular to fit into the usual well-posedness framework for the 2D Euler equations.

It is relatively straightforward to show that for initial data $w_0\in L^1\cap L^\infty$ there exists a weak solution (see definition \eqref{def sol} below), which was first done by Marchioro and Pulvirenti in \cite{marchioro1991vortex}, see also \cite{lopes2011existence} for the $L^p$-case.

Uniqueness of such solutions is a much more difficult and largely unresolved problem. Indeed, almost all existing uniqueness proofs for the 2D Euler equations require the velocity to be log-Lipschitz and that is not the case here.

This limitation is also sharp, and it can be shown
 that for measure- or even $L^p$-valued vorticities (leading to velocities which are only Sobolev-regular), solutions to the Euler equations \eqref{oiler} with an additional forcing term on the right-hand side are non-unique, see e.g.\ \cite{vishik2018a, vishik2018b,grotto2022burst, brue2026flexibility}.
However, in this specific problem, the singular part of the solution(s) has far more structure than a "generic" measure-valued vorticity, and therefore one might hope that uniqueness still holds.


In fact, it can be shown \cite{starovoitov1994uniqueness,lacave2009uniqueness} that if the initial data $w_0\in L^1\cap L^\infty$ is constant in a neighbourhood of the point-vortex, then this property is preserved globally in time and leads to (global in time) uniqueness of solutions, as log-Lipschitzness of the velocity in a neighborhood of the region where the vorticity is non-constant is sufficient for uniqueness.

The goal of this paper is to improve this result and to show that uniqueness still holds for non-constant vorticities,
 if the initial vorticity converges faster than second order to a constant at $X(0)$, i.e.\ that there is some $\alpha>2$ such that \begin{align}
\limsup_{x\rightarrow X(0)} \frac{|w_0(x)-w_0(X(0))|}{|x-X(0)|^\alpha}<\infty.\label{main ass}
\end{align}
This contains the case of $w_0$ being constant near $X(0)$ as a special case.

\subsection{Main result}

We say that a given triple $(w,u,X)$ is a solution to the vortex wave system for initial data $w_0,X(0)$ if the first equation in  \eqref{vw} holds distributionally, i.e.\ for any $\phi\in C_c^\infty(\R^2\times \R_{\geq 0})$ it holds that \begin{align}
\begin{drcases}
&\int_0^\infty\int_{\R^2} w_t(x)\de_t\phi(t,x)+u_t(x)\cdot \nabla\phi(t,x) w_t(x) \dx \dt=-\int_{\R^2} w_0(x)\phi(0,x)\dd x\qquad\qquad\quad\\
& u_t(x)=(\nabla^\perp \Delta^{-1} w_t)(x)+\frac{(x-X(t))^\perp}{2\pi|x-X(t)|^2}\\
&\dfrac X(t)=(\nabla^\perp \Delta^{-1} w_t)(X(t)).
\end{drcases}\label{def sol}
\end{align}
Note that this is not equivalent to being a weak solution to the 2D Euler equations in velocity form in the usual sense (e.g.\ the velocity is not in $L^2$). Still, solutions with $w_t\in L^1\cap L^\infty$ always exist for $w_0\in L^1\cap L^\infty$ \cite{marchioro1991vortex} and can also be recovered as suitable limits of classical solutions \cite{bjorland2011vortex}. 
Furthermore, $w\in L^1\cap L^\infty$ guarantees that $\nabla^\perp\Delta^{-1} w_t$ is log-Lipschitz (see Lemma \ref{ll lemma app}) and that the ODE for $X$ is uniquely solvable for a given $w$.

A detailed discussion of the equivalence of this solution concept with Lagrangian formulations can be found in \cite{lacave2009uniqueness}.

Our main theorem is then the following: \begin{theorem}\label{T1}
Suppose $X(0)\in \R^2$ is given and $w_0\in L^\infty(\R^2)$ has compact support and fulfills the assumption \eqref{main ass}.

Then the solution $w\in L^\infty([0,\infty),L^1(\R^2)\cap L^\infty(\R^2))$ to the vortex-wave system with this initial data in the sense of \eqref{def sol} is unique.
\end{theorem}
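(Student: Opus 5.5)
The plan is to work in Lagrangian variables around the point vortex and prove two things. First, the structural assumption \eqref{main ass} is propagated in time. Second, under that propagated structure, the energy-type $L^2$ difference estimate of Yudovich type closes even though the velocity is only singular near $X(t)$.

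\textbf{Step 1: flow and propagation of the decay.} Given any solution $w\in L^\infty(L^1\cap L^\infty)$, the field $v=\nabla^\perp\Delta^{-1}w$ is log-Lipschitz (Lemma \ref{ll lemma app}), so $X$ is uniquely determined. Following \cite{lacave2009uniqueness}, $w$ is transported by the flow $\Phi_t$ of $u$, and trajectories starting away from $X(0)$ never reach $X(t)$. Using $(x-X)\cdot(x-X)^\perp=0$, we get
\[
\frac{d}{dt}|\Phi_t(x)-X(t)|=O\bigl(|\Phi_t-X|\log|\Phi_t-X|^{-1}\bigr).
\]
Hence $|\Phi_t(x)-X(t)|$ is comparable to $|x-X(0)|^{e^{\pm Ct}}$. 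The exponent degrades only by a factor $e^{-Ct}$, so a condition $|w_t(x)-c|\le C|x-X(t)|^{\alpha(t)}$ with $\alpha(t)=\alpha e^{-Ct}$ is propagated, where $c=w_0(X(0))$. The point vortex rotates around $X$ but does not change distances to leading order, so the constant value at the vortex is preserved. To get uniqueness globally, we use this on a short interval $[0,T]$ with $\alpha(t)>2$ and then iterate, restarting from time $T$. Iteration requires $\alpha$ not to decay, which fails. Instead, I would improve the estimate: the singular rotation field commutes with radial distance exactly, so only the log-Lipschitz part $v$ changes distances. Near $X$, $v(x)-v(X)=\nabla v(X)(x-X)+O(|x-X|^{2}\log)$ if $w$ is nearly constant there. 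Indeed, near-constancy to order $\alpha>2$ makes $v$ $C^{1,\beta}$ near $X$, so $\Phi_t-X$ is bi-Lipschitz in $|x-X|$ and $\alpha$ is genuinely preserved. This bootstrap (constancy gives $C^{1}$ velocity near $X$, which preserves the exponent) is run as a continuity argument.

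\textbf{Step 2: difference estimate.} Take two solutions $(w^i,X_i)$ with the same data. Work in a frame centred at the vortex, or compare flows $\Phi^1,\Phi^2$ in a Lagrangian fashion, estimating
\[
\delta(t)=|X_1-X_2|+\int |\Phi^1_t-\Phi^2_t|\,|w_0|\,dx
\]
(or the analogous quantity for $w_0-c$). The difference of the singular kernels,
\[
K(x-X_1)-K(x-X_2)\sim \frac{|X_1-X_2|}{|x-X|^2},
\]
is integrated against $w-c$ rather than $w$. The constant part $c$ produces a rigid-rotation-like contribution that cancels by antisymmetry, since the transport of a constant is trivial. We then need $\int |w-c|\,|x-X|^{-2}dx<\infty$, which holds once $\alpha>0$. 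The delicate term is the mismatch between the two flows near the vortex. The velocity gradient there is $|x-X|^{-2}$, and the weight $|w_0-c|\lesssim |x-X|^{\alpha}$ with $\alpha>2$ makes $|x-X|^{\alpha-2}$ bounded. This is exactly where $\alpha>2$ enters: it gives an effectively Lipschitz estimate $\delta'\le C\delta\log(1/\delta)$. Osgood's lemma then yields $\delta\equiv0$.

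\textbf{Main obstacle.} The hardest step is Step 2's near-vortex term. Two particles near the two different vortex centres rotate at angular speeds $|x-X_i|^{-2}$, which are wildly different, so the Lagrangian distance is not controlled by $\delta$ directly. First I would measure positions in polar coordinates relative to each respective vortex, so that the radial distances are controlled as in Step 1. The angular error is then weighted by $|w_0-c|$, which is small on the set where angles diverge. Concretely, split at the radius $r=\delta^{1/2}$. Inside this radius, bound the contribution crudely by $r^{\alpha}\cdot$ (area $\times$ displacement), which is $\lesssim\delta$ when $\alpha>2$. Outside it, the Lipschitz constant of the singular field is $r^{-2}$, but it is weighted by the decay $r^{\alpha}$, which is again acceptable.
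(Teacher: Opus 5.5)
Your Step 1 is essentially the paper's Lemma \ref{decay prop}, with one correction. When $w$ is only bounded, $v$ need not be $C^{1}$ (or even Lipschitz) in any neighbourhood of $X$. What holds, and is all you need, is the pointwise bound $|v(x)-v(X)|\lesssim|x-X|$. This already follows from the degraded exponent $\alpha e^{-Ct}>0$, so no continuity argument is required.

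The genuine gap is in Step 2, and your ``main obstacle'' paragraph does not close it. Write $K(y)=y^\perp/(2\pi|y|^2)$ and $r\approx|x-X_0|$. In $\frac{\mathrm{d}}{\mathrm{d}t}\int|\Phi^1_t-\Phi^2_t|\,\mathrm{d}\mu$, the vortex self-interaction term $|K(\Phi^1_t(x)-X)-K(\Phi^2_t(x)-X)|\approx r^{-2}|\Phi^1_t(x)-\Phi^2_t(x)|$ is integrated against the \emph{same} measure $\mu$ that defines $\delta$. Whatever weight you put on the labels ($|w_0|$, $|w_0-c|$ or $r^\alpha$), you get $\int r^{-2}|\Phi^1_t-\Phi^2_t|\,\mathrm{d}\mu$, which is not $\lesssim\delta\log(1/\delta)$. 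The factor $r^{-2}$ is a relative growth rate, and a weight appearing on both sides cannot cancel it.

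The smallness $|w_0-c|\lesssim r^{\alpha}$ legitimately enters only in the \emph{other} term, where $v^1-v^2$ is generated by $(w_0-c)\circ(\Phi^1_t)^{-1}-(w_0-c)\circ(\Phi^2_t)^{-1}$. You are spending it twice. Your split at $r=\delta^{1/2}$ still leaves a factor up to $\delta^{-1}$ outside the small ball.

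This is not a technicality. The vortex field is a strong shear: a radial offset $\varepsilon$ at radius $r$ becomes a Euclidean separation of order $\varepsilon t/r^{2}$ within time $t$. So a Gronwall or Osgood inequality for a Euclidean Lagrangian distance cannot hold uniformly near the vortex. Polar coordinates about each vortex do not fix this either. Step 1 only gives $|\Phi^i_t(x)-X_i|\approx|x-X_0|$ up to constants, whereas the angular velocities differ by about $r^{-3}$ times the radial mismatch. Moreover, the radial component of $v-v(X)$ creates radial mismatches of size $|\Phi^1_t-\Phi^2_t|\log$ at every instant, and the rotation then amplifies these by $r^{-2}$.

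The missing idea is the paper's choice of geometry. It measures the discrepancy in the anisotropic metric $m(x,y)=\max(|x|,|y|)|x_\theta-y_\theta|+\max(|x|,|y|)^{-2}\bigl||x|-|y|\bigr|$, in which the vortex shear is effectively Lipschitz (Lemma \ref{ll lemma}). In this metric the radial part of $v-v(0)$ is far from Lipschitz. To handle that, the paper replaces $|x|$ by the modulated radius $d(t,x)=|x|e^{2\pi\Psi_R(t,x)}$, a function of the stream function of $\bar u$. Then $\nabla d\perp\bar u$, so $\frac{\mathrm{d}}{\mathrm{d}t}d(t,\mathcal{X}_t(x))=(\partial_t d)(t,\mathcal{X}_t(x))$ and the instantaneous radial transport disappears. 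What remains obeys $|\partial_t d(t,x)-\partial_t d(t,y)|\lesssim|x|^{2}|x-y|\log(2+|x-y|^{-1})$ (Lemma \ref{det d}), which is exactly compatible with the weight $r^{-2}$ on radial differences.

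Since the two solutions produce different $d^1$ and $d^2$, the energy also carries terms $|x|^{\beta-4}|\Psi_R^1-\Psi_R^2|^2$. Their time derivatives are controlled by Lemma \ref{deri dpsi}, and Lemmas \ref{det d} and \ref{deri dpsi} are where $\alpha>2$ is really used. Only this combined energy $E_t$ satisfies $\frac{\mathrm{d}}{\mathrm{d}t}E_t\lesssim E_t\log(2+E_t^{-1})$. Without a device of this kind, your inequality $\delta'\le C\delta\log(1/\delta)$ is unsupported.
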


Here the assumption of compact support of $w_0$ merely serves to simplify the proof, one can prove the same result also for $w_0\in L^1\cap L^\infty$ at the price of a few additional calculations.\newline
In principle, the argument works also for multiple point vortices or bounded domains with a sufficiently regular boundary if there are no collisions of the point vortices with each other or the boundary.\smallskip

Let us say a few more words about the existing literature:\newline
The vortex-wave system is mostly used to study the interaction of a concentrated vortex with the surrounding vorticity, see e.g.\ \cite{ionescu2022axi,flandoli2026early}.
However, it can also be derived as the vanishing solid limit of fluid-solid systems with non-zero circulation around the solids, see e.g.\ \cite{glass2014motion,glass2016motion,glass2019dynamics}.

Similar models for the SQG or Vlasov-Poisson equations have been studied in \cite{cobb2025existence,marchioro2011cauchy,crippa,pausader2024stability}. 

For the 2D Navier-Stokes equation, global existence and uniqueness for initial data containing point vortices has been established in \cite{gallagher2005uniqueness}. A detailed study of the vanishing viscosity limit for such data has been given in \cite{nguyen2019inviscid}. 

Regarding the uniqueness of the 2D Euler equations for velocities which are not log-Lipschitz or Osgood, there are various results, either relying on the vorticity being constant near the singular part, see e.g.\ \cite{lacave2009uniqueness,lacave2019euler,han2021euler}, specific spiral symmetries \cite{jeong2024logarithmic,elgindi2023wellposedness}, the special structure of singularities induced by rough boundaries \cite{agrawal2020uniqueness,agrawal2025uniqueness}, or the Baire category theorem \cite{galeati20262d}.
 Let us also mention the case of bounded vorticity without decay at $\infty$ as another area in which there is ongoing research, see e.g.\ \cite{elgindi2020symmetries,mcowen2022perfect,cobb2026unbounded} and the references therein.





\subsection{Idea of the proof}
One can eliminate the variable $X$ by rewriting the equations in a frame centered on $X$, i.e.\ by making the change of variables $x\rightarrow x-X$, $\varpi_t=w_t(\cdot -X(t))$, $\bar{u}_t=u_t(\cdot-X(t))$, leading to the equations \begin{align}
\de_t \varpi_t+(\bar u_t\cdot\nabla) \varpi_t=0,\quad \bar u_t(x)=v_t(x)-v_t(0)+\frac{x^\perp}{2\pi|x|^2},\quad v_t=\nabla^\perp \Delta^{-1}\varpi_t.\label{xframe}
\end{align}
The classical argument for the uniqueness of the Euler equations is to show a differential estimate for $\norm{\mathcal{X}_t^1-\mathcal{X}_t^2}_{L^2}$ where $\mathcal{X}_t^1$ and $\mathcal{X}_t^2$ are the flow maps of two different solutions from the same initial data, and to conclude from Gronwall's Lemma that this must be $0$, implying uniqueness, see e.g.\ \cite{marchioro2012mathematical,loeper2006uniqueness}. This normally requires one to estimate $\norm{u_t^1(\mathcal{X}_t^1)-u_t^2(\mathcal{X}_t^1)}_{L^2}$ and $\norm{u_t^2(\mathcal{X}_t^1)-u_t^2(\mathcal{X}_t^2)}_{L^2}$ in an appropriate way.
Closing the estimates for the latter term usually requires the velocity to be Lipschitz or log-Lipschitz, which is not the case here.

Let us explain how to circumvent this issue here:

Somewhat inspired by the approach in \cite{agrawal2025uniqueness}, the main idea is to adapt the usual proof of uniqueness to a more strict metric, in which the velocity of the point vortex is almost Lipschitz in the sense that its symmetric gradient in this metric is bounded (which is enough to control the spread of trajectories).

This metric is \begin{align}
m(x,y):=\max(|x|,|y|)|x_\theta-y_\theta|+\max(|x|,|y|)^{-2}\big||x|-|y|\big|,\label{def m}
\end{align}
where $x_\theta$ and $y_\theta$ are standard polar coordinates.
The remaining velocity $v_t-v_t(0)$ is clearly not Lipschitz or Log-Lipschitz in this metric, though.
However, close to the point vortex one expects that the actual impact of the velocity field $v-v(0)$ on the flow should be much smaller than the instantaneous velocity $v_t-v_t(0)$, since there should be massive cancellations due to the fact that the rotation of a particle around the point vortex is much faster than the variations of $v_t-v_t(0)$ and hence the velocity $v_t-v_t(0)$ should be "averaged" along rotations around the point vortex which makes it much smaller.

This idea is implemented by
taking a metric which is equivalent to $m$, depending on $\varpi_t$, for which the relevant time derivatives under the flow are much smaller. More concretely, we will look at something like \begin{align*}
\max(|x|,|y|)|x_\theta-y_\theta|+\max(|x|,|y|)^{-2}\big|d(t,x)-d(t,y)\big|\approx m(x,y),
\end{align*}
where $d(t,x)\approx |x|$ and $\nabla d$ is orthogonal to $\bar u$ near $0$ (see Section \ref{S24}).
If one then tracks the evolution of $d(t,\mathcal{X}_t(x))-d(t,\mathcal{X}_t(y))$, where $\mathcal{X}_t$ is the flow map of $u$, only the time derivative of $d$ itself contributes to the time derivative of this. This time derivative can be made $\lesssim \max(|x|,|y|)^2|x-y|\log(2+\frac{1}{|x-y|})$ if an assumption like \eqref{main ass} holds (see Lemma \ref{det d}).
Requiring some kind of smallness assumption like \eqref{main ass} is natural for such an argument, as the velocity with which $\varpi$ moves diverges near $0$ and hence the contribution of this part to the time derivative of $d$ would be of a bigger order without a smallness condition. The assumption \eqref{main ass} can easily be shown to be propagated under the evolution, see Lemma \ref{decay prop}.

Hence, even though $\dfrac m(\mathcal{X}_t(x),\mathcal{X}_t(y))$ is unbounded, we do still have $m(\mathcal{X}_t(x),\mathcal{X}_t(y))\lesssim |x-y|^{e^{-Ct}}$ and the flow behaves as if its velocity were log-Lipschitz with respect to the metric $m$.

However, this functional depends on $\bar u_t$, and if we have two solutions from the same initial datum whose equality we are trying to prove, this also gives rise to two different realisations of $d$. Hence, instead of looking at $d(t,\mathcal{X}_t^1(x))-d(t,\mathcal{X}_t^2(x))$, we will look at $d^1(t,\mathcal{X}_t^1(x))-d^2(t,\mathcal{X}_t^2(x))$, where $d^1$ and $d^2$ are the two different realisations of $d$ for the two different solutions and add an extra term of the type $|x|^{-2}|d^1(t,\mathcal{X}_t^1(x))-d^2(t,\mathcal{X}_t^1(x))|+|x|^{-2}|d^1(t,\mathcal{X}_t^2(x))-d^2(t,\mathcal{X}_t^2(x))|$ (in the splitting \eqref{split E}, this is $E_t^1+E_t^2$) which controls the error arising through this and whose time derivative can still be controlled through $E$.

The proof then proceeds by considering a weighted $L^2$-norm of these terms (defined in \eqref{def E}), which in particular controls $\norm{|x|^{-1+\frac{\alpha-2}{200}}m(\mathcal{X}_t^1(x),\mathcal{X}_t^2(x))}_{L^2}$ and showing a Gronwall estimate for this energy.

\subsubsection{A possible improvement}

It would of course be very desirable to show a version of this result where the constraint $\alpha>2$ in \eqref{main ass} is replaced by $\alpha\geq 1$, as the assumption is then true for any Lipschitz $w_0$. The author believes that this can be done by taking into account some further, more difficult cancellation effects. More precisely, the limiting factor in the current analysis is the contribution of the transport of $\varpi_t$ near $0$ by the (diverging) velocity $\frac{x^\perp}{2\pi |x|^2}$ to the time derivatives of $d$ and related quantities. However, this contribution should be highly oscillatory since, at leading order, particles are just rotating around the point vortex at a very high frequency and its actual influence should be much smaller than the bounds for the time derivative.

This seems to be much more difficult to implement than the current argument.

\subsection{Structure of the paper}
The rest of the article is devoted to the proof of Theorem \ref{T1}. Section \ref{S2} contains the definition of distance and energy functionals as well as the proof that the Assumption \eqref{main ass} is propagated under the evolution.
Section \ref{S3} contains the proof of the desired Gronwall estimate for the energy.
Section \ref{S4} contains the proofs of some technical lemmas.
An appendix contains some classical estimates.


\section{Preparations and definition of the energy functional}\label{S2}
\subsection{Notation}
We write $A\lesssim B$ if there is a constant $C$ such that $A\leq CB$. Such a constant $C$ is allowed to depend on the initial data $\varpi_0$, the value of the limsup in the assumption \eqref{main ass}, the constants $S_1$ and $S_2$, defined further below and $\alpha$. Such a constant is also allowed to vary from line to line.

We generally work with the variables $\varpi,\bar u,v$ as defined in \eqref{xframe}, where the first equation is understood distributionally.

For the ease of notation, we also use the shorthand $\BS$ for the Biot-Savart law, i.e.\ \begin{align*}
\BS[f]:=\nabla^\perp \Delta^{-1} f=f*\frac{x^\perp}{2\pi|x|^2}.
\end{align*}
For an $x\in \R^2\backslash \{0\}$, we denote its polar angle by \begin{align*}
    x_\theta :=\begin{cases}\cos^{-1}\left(\frac{x_1}{|x|}\right)& \text{for $x_2\geq 0$}\\-\cos^{-1}\left(\frac{x_1}{|x|}\right)& \text{for $x_2< 0$}
    \end{cases}
\end{align*}
with the convention that the range of $\cos^{-1}$ is $[0,\pi]$.
We will always understand these angles (and differences of them) as numbers in $\R/2\pi \Z$. The distance between polar angles is always understood as the distance on $\R/2\pi \Z$, i.e.\ $|x_\theta-y_\theta|:=\min_{k\in \Z}|x_\theta-y_\theta+2\pi k|$.

\subsection{Preliminaries}
Let us first recall that we have \begin{align}
&|\bar u_t(x)|\lesssim 1+|x|^{-1}\label{uni bd u}\\
&|v_t(x)|\lesssim 1\label {bd v}
\end{align}
for all $x$, since the Biot-Savart law maps $L^1\cap L^\infty$ to $L^\infty$ by e.g.\ Young's convolution inequality.

We denote by $\mathcal{X}_t$ the flow map of $u$ defined by \begin{align*}
    \mathcal{X}_0(x)=x,\quad \dfrac \mathcal{X}_t(x)=\bar u_t(\mathcal{X}_t(x)).
\end{align*}
As $\bar u_t$ is locally log-Lipschitz (see Lemma \ref{ll lemma app}), except at $0$, this is well-defined, as long as $X_t(x)\neq 0$, which in fact always is the case for $x\neq 0$. Indeed it follows from the log-Lipschitz property of $v_t$
(see e.g.\ Lemma \ref{ll lemma app}) and the fact that the point vortex velocity is radial around the origin that \begin{align*}
    \left|\dfrac |\mathcal{X}_t(x)|\right|=\left|v_t(\mathcal{X}_t(x))-v_t(0)\right|\lesssim  |\mathcal{X}_t(x)|\log\left(2+\frac{1}{|\mathcal{X}_t(x)|}\right)
\end{align*}
and hence Gronwall's Lemma implies that for $|x|\leq 1$ we have \begin{align}
|\mathcal{X}_t(x)|\gtrsim |x|^{e^{Ct}}\label{bas gronwall}
\end{align}
for some $C$ not depending on $x$ or $t$.

At $0$, where the ODE is not defined, we set $\mathcal{X}_t(0)=0$.

This is a volume-preserving and invertible map and fits into the formalism of regular Lagrangian flows, see e.g.\ \cite{crippa2}.

It is known that every solution $\varpi\in L_t^\infty(L_x^1\cap L_x^\infty)$ of \eqref{xframe} is of the form $\varpi_t=\varpi_0\circ \mathcal{X}_t^{-1}$, see e.g.\ \cite{lacave2009uniqueness}.

\subsection{Propagation of \eqref{main ass} and the compact support of $\varpi$}
Before coming to the actual uniqueness proof, we show that the property \eqref{main ass} is propagated by the evolution and that the support of each solution remains compact.

\begin{lemma}\label{comp supp}
Suppose that the initial datum $\varpi_0\in L^\infty(\R^2)$ is supported in $B_{S_0}(0)$. Then there is a function $S^t:\R_{\geq 0}\rightarrow \R$, only depending on $S_0$ and $\norm{\varpi_0}_{L^\infty}$, such that every solution $\varpi_t$ to the equations \eqref{xframe} is supported in $B_{S_t}(0)$ at time $t$.
\end{lemma}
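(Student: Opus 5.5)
We plan to use the Lagrangian representation $\varpi_t=\varpi_0\circ\mathcal{X}_t^{-1}$, which holds for every solution in $L_t^\infty(L^1_x\cap L^\infty_x)$ (see the discussion preceding the lemma). Then $\supp\varpi_t\subseteq\mathcal{X}_t(B_{S_0}(0))$, and it suffices to bound $|\mathcal{X}_t(x)|$ for $|x|<S_0$ by a quantity depending only on $S_0$, $\norm{\varpi_0}_{L^\infty}$ and $t$.

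The key observation is that the point-vortex part $\frac{x^\perp}{2\pi|x|^2}$ is orthogonal to $x$, so it does not change $|\mathcal{X}_t(x)|$. Hence, for $\mathcal{X}_t(x)\neq0$,
\begin{align*}
\Big|\frac{\mathrm d}{\mathrm dt}|\mathcal{X}_t(x)|\Big|=\Big|\frac{\mathcal{X}_t(x)}{|\mathcal{X}_t(x)|}\cdot\big(v_t(\mathcal{X}_t(x))-v_t(0)\big)\Big|\leq 2\norm{v_t}_{L^\infty}.
\end{align*}
We bound $\norm{v_t}_{L^\infty}$ uniformly in the solution by the standard estimate
\begin{align*}
\norm{\BS[f]}_{L^\infty}\lesssim \norm{f}_{L^1}^{1/2}\norm{f}_{L^\infty}^{1/2},
\end{align*}
obtained by splitting the kernel at radius $R$ and optimizing. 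Since $\mathcal{X}_t$ is measure preserving and $\varpi_t=\varpi_0\circ\mathcal{X}_t^{-1}$, we have $\norm{\varpi_t}_{L^p}=\norm{\varpi_0}_{L^p}$ for $p=1,\infty$. Moreover $\norm{\varpi_0}_{L^1}\leq \pi S_0^2\norm{\varpi_0}_{L^\infty}$. Thus $\norm{v_t}_{L^\infty}\leq K:=C S_0\norm{\varpi_0}_{L^\infty}$, with $K$ independent of the particular solution. Integrating in time gives $|\mathcal{X}_t(x)|\leq S_0+2Kt$, so we may take $S_t:=S_0+2Kt+1$ to get strict containment in the open ball.

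The only delicate point is the legitimacy of the Lagrangian representation and of the pointwise ODE computation for a merely distributional solution. Here we rely on the cited result that every such solution is transported by the regular Lagrangian flow $\mathcal{X}_t$. We also use that trajectories starting at $x\neq0$ never reach $0$, by \eqref{bas gronwall}, so $|\mathcal{X}_t(x)|$ is differentiable along them. The point $x=0$ is a null set and is irrelevant for the support in the $L^\infty$ sense. If one wanted to avoid the flow, an alternative is to test the weak formulation with $\phi=\psi(|x|-S_0-2Kt)$ for a smooth increasing cutoff $\psi$. The radial symmetry of the vortex term again removes it, and this yields $\int\varpi_t\,\psi=0$ for nonnegative parts after splitting $\varpi_0=\varpi_0^+-\varpi_0^-$. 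That route is messier, so we would use the flow.
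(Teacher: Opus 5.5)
Your proposal is correct and follows essentially the paper's argument: $\varpi_t$ is transported by the flow $\mathcal{X}_t$, and a velocity bound that is uniform over all solutions is integrated in time. The only difference is how you handle the singularity. You bound the radial speed $|v_t(\mathcal{X}_t(x))-v_t(0)|$ on all of $\R^2$, using that the vortex part is tangential, and you make the dependence on $S_0$ and $\norm{\varpi_0}_{L^\infty}$ explicit through conservation of $\norm{\varpi_t}_{L^1}$ and $\norm{\varpi_t}_{L^\infty}$. The paper instead bounds $\norm{\bar u_s}_{L^\infty(\R^2\backslash B_1(0))}$ via $|\bar u_t(x)|\lesssim 1+|x|^{-1}$ and takes $S^t\geq\max(1,\dots)$ to stay away from the origin. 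Your version avoids treating $B_1(0)$ separately and is slightly cleaner.
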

\begin{proof}
Since $\varpi$ is transported by $u$, we must have that $\varpi_t$ is supported in $B_{S^t}(0)$, where $S^t\geq \max(1, S_0+\int_0^t\norm{\bar u_s}_{L^\infty(\R^2\backslash B_{1}(0))}\dd s)$. Since we have a uniform bound 
$\norm{\bar u_s}_{L^\infty(\R^2\backslash B_{1}(0))}\lesssim 1$ by \eqref{uni bd u},
we see that this $S^t$ can be chosen independently of the solution and finite.
\end{proof}

\begin{lemma}\label{decay prop}
Suppose $\varpi_0$ fulfills \eqref{main ass}, then there is a constant $C$, only depending on \linebreak
$\norm{|\cdot|^{-\alpha}(\varpi_0(\cdot)-\varpi_0(0))}_{L^\infty}$ and $\norm{\varpi_0}_{L^1\cap L^\infty}$, such that every solution $\varpi$ to \eqref{xframe} fulfills \begin{align}
   \norm{|\cdot|^{-\alpha}(\varpi_t(\cdot)-\varpi_t(0))}_{L^\infty(\R^2)}\leq Ce^{Ce^{Ct}} \label{ass later t}
\end{align}
and \begin{align}\label{est v}
\norm{|\cdot|^{-1}(v_t(\cdot)-v_t(0))}_{L^\infty(\R^2)}\leq Ce^{Ct} 
\end{align}
for all $t\in \R_{\geq 0}$.

Furthermore it holds that $\varpi_t(0)=\varpi_0(0)$.
\end{lemma}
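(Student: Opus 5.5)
The plan is to run a continuity/bootstrap argument that couples the two estimates. The key input is the Lagrangian representation $\varpi_t=\varpi_0\circ\mathcal{X}_t^{-1}$ together with $\mathcal{X}_t(0)=0$.

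\textbf{Step 1: the value at the origin.} Since $\mathcal{X}_t$ fixes $0$, we have $\varpi_t(0)=\varpi_0(0)$. Because $\varpi$ is only in $L^\infty$, "value at $0$" has to be read as the constant $c:=\varpi_0(0)$ appearing in \eqref{main ass}. The statement we actually prove is then $|\varpi_t(x)-c|\lesssim |x|^{\alpha}$ for a.e.\ $x$ near $0$. This bound shows that $c$ is the unique constant with this property at time $t$, which gives the last claim.

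\textbf{Step 2: bound on $v$ given a bound on $\varpi$.} Write $v_t(x)-v_t(0)=\BS[\varpi_t-c\mathbbm{1}_{B_1}](x)-\BS[\varpi_t-c\mathbbm{1}_{B_1}](0)+c\bigl(\BS[\mathbbm{1}_{B_1}](x)-\BS[\mathbbm{1}_{B_1}](0)\bigr)$.

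The last term is explicit: $\BS[\mathbbm{1}_{B_1}](x)=\tfrac12 x^\perp$ for $|x|\le1$, so it is Lipschitz. For the first term set $f=\varpi_t-c\mathbbm{1}_{B_1}$. Suppose $|f(y)|\le A_t|y|^\alpha$ on $B_1$, and note $f$ is bounded in $L^1\cap L^\infty$ uniformly in $t$. Split the difference of kernels $K(x-y)-K(-y)$ into the regions $|y|<2|x|$ and $|y|\ge2|x|$.
\begin{itemize}
\item On $|y|<2|x|$, the size of $f$ gives a contribution $\lesssim A_t|x|^{\alpha}\cdot|x|\lesssim A_t|x|$.
\item On $|y|\ge2|x|$, the kernel difference is $\lesssim|x|/|y|^2$. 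Against $|f|\lesssim A_t|y|^\alpha$ (integrable since $\alpha>0$) and the $L^1$ tail, this gives $\lesssim (A_t+1)|x|$.
\end{itemize}
Away from $B_1$ the bound \eqref{est v} is trivial by \eqref{bd v}. Together,
\[
\norm{|\cdot|^{-1}(v_t-v_t(0))}_{L^\infty}\lesssim 1+A_t .
\]

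\textbf{Step 3: bound on $\varpi$ given a bound on $v$.} The radial part of $\bar u$ is $v_t-v_t(0)$, so
\[
\Bigl|\tfrac{d}{dt}|\mathcal{X}_t(x)|\Bigr|\le L_t|\mathcal{X}_t(x)|,\qquad L_t=\norm{|\cdot|^{-1}(v_t-v_t(0))}_{L^\infty}.
\]
Gronwall gives $|\mathcal{X}_t^{-1}(x)|\le e^{\int_0^tL}|x|$, and therefore
\[
|\varpi_t(x)-c|=|\varpi_0(\mathcal{X}_t^{-1}x)-c|\le A_0e^{\alpha\int_0^tL}|x|^\alpha,\qquad\text{i.e.}\qquad A_t\le A_0e^{\alpha\int_0^t L_s\,ds}.
\]

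\textbf{Closing the loop.} Steps 2 and 3 combine into $L_t\lesssim 1+A_0e^{\alpha\int_0^tL}$, which can blow up (a Riccati-type inequality). To avoid this, first get an a priori bound on $L_t$ independent of $A_t$, and only then feed it into Step 3; this is the main obstacle.

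For the a priori bound, note that the local part of $v_t-v_t(0)$ only involves $\varpi_t$ on $B_{2|x|}$. That region is the image under the flow of a region of size $\lesssim |x|^{e^{-Ct}}$ by \eqref{bas gronwall}, which is only a weak (Hölder) control and not enough by itself. So instead I would refine Step 2 using only $L^\infty$ bounds: one has $|v_t(x)-v_t(0)-c\tfrac12 x^\perp|\lesssim|x|\log(2+1/|x|)\sup_{B_{r}}|\varpi_t-c|+|x|$. Then split scales: the set where $A_t$ is effective versus the rest.

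In practice I expect the correct closing to be a bootstrap in the $L^\infty$ deviation. Let $\delta(r,t)=\sup_{B_r}|\varpi_t-c|$. Step 3 with the log-Lipschitz Gronwall \eqref{bas gronwall} gives $\delta(r,t)\lesssim A_0 r^{\alpha e^{-Ct}}$, which is small at small $r$. Inserting $\delta$ into the refined Step 2 and summing dyadically yields $L_t\lesssim 1+\sum_k 2^{-k}\cdot$(bounded) $\lesssim e^{Ct}$ uniformly, proving \eqref{est v}. Plugging this into Step 3 then gives $A_t\le A_0e^{Ce^{Ct}}$, which is \eqref{ass later t}.
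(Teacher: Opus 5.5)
Your closing argument is the paper's: the log-Lipschitz Gronwall bound \eqref{bas gronwall} gives $|\varpi_t(x)-c|\lesssim \min(1,|x|^{\gamma})$ with $\gamma=\alpha e^{-Ct}$. Its constant depends only on $\norm{\varpi_0}_{L^1\cap L^\infty}$, so it is uniform over all solutions. This yields $L_t\lesssim e^{Ct}$ with no dependence on $A_t$. Your Step 3 then gives $A_t\le A_0e^{\alpha\int_0^tL_s\dd s}\le Ce^{Ce^{Ct}}$. You are also right that Steps 2 and 3 alone only give a Riccati-type inequality, so an $A_t$-independent bound on $L_t$ is the crux.

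Your justification of that crux is muddled in three places:
\begin{itemize}
\item The remark that the H\"older control is ``not enough by itself'' is false; it is exactly what the paper uses.
\item The ``refined Step 2'', with a single $\sup_{B_r}|\varpi_t-c|$ and a factor $\log(2+1/|x|)$, still carries that logarithm. It cannot give \eqref{est v} and should be dropped.
\item The sum $\sum_k 2^{-k}\cdot(\text{bounded})$ is the wrong sum. It would give a time-independent constant and does not explain the $e^{Ct}$ you then claim.
\end{itemize}

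The fix is to rerun your own Step 2 with $A_t|y|^\alpha$ replaced by $|y|^{\gamma}$. The region $|y|<2|x|$ contributes $\lesssim |x|^{1+\gamma}$. The region $|y|\geq 2|x|$ contributes $\lesssim |x|\int_{2|x|\leq |y|\leq 1}|y|^{\gamma-2}\dd y+|x|\lesssim |x|(1+\gamma^{-1})$. So the correct dyadic sum is $\sum_k 2^{-k\gamma}\approx \gamma^{-1}\approx e^{Ct}$. Any positive decay exponent suffices, and the loss $\gamma^{-1}$ is exactly where the $e^{Ct}$ in \eqref{est v}, and hence the double exponential in \eqref{ass later t}, comes from. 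With this correction your proof is complete and coincides with the paper's.
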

\begin{proof}
Since it holds that $\mathcal{X}_t(0)=0$ and $\varpi_t$ is transported, we see that \eqref{main ass} and \eqref{bas gronwall} imply that \begin{align*}
    \norm{\max(1,|x|^{-\alpha e^{-Ct}})(\varpi_t(x)-\varpi_t(0))}_{L_x^\infty}\lesssim 1.
\end{align*} 
%

We then note that \begin{align*}
   |x|^{-1}|v_t(x)-v_t(0)|&\lesssim |x|^{-1}\left|\int \left(\frac{x-z}{|x-z|^2}+\frac{z}{|z|^2}\right) \varpi_t(z)\dd z\right| \\
   &\leq |x|^{-1}\left|\int_{\R^2} \left(\frac{x-z}{|x-z|^2}+\frac{z}{|z|^2}\right)(\varpi_t(z)-\mathds{1}_{B_1}(0)\varpi_t(0))\dd z\right|\\
   &\quad+|x|^{-1}\left|\int_{\R^2} \left(\frac{x-z}{|x-z|^2}+\frac{z}{|z|^2}\right)\mathds{1}_{B_1}(0)\varpi_t(0)\dd z\right|.
\end{align*}
For $|x|\geq \frac{1}{2}$, we trivially have an uniform bound on this because we have a uniform bound on $v$.

For $|x|\leq \frac{1}{2}$, the second term is uniformly bounded because $\frac{|\cdot|}{|\cdot|^2}*\mathds{1}_{B_1(0)}$ is smooth in $B_{\frac{2}{3}}(0)$. The first term is uniformly bounded as \begin{align*}
  \mel  \left|\int_{\R^2} \left(\frac{x-z}{|x-z|^2}+\frac{z}{|z|^2}\right)(\varpi_t(z)-\mathds{1}_{B_1}(0)\varpi_t(0))\dd z\right|\\
  &\lesssim \int_{B_1(0)\cup \supp \varpi_t} |x|\left(|z|^{-2}+|z|^{-1}|x-z|^{-1}\right)\min(1,|z|^{\alpha e^{-Ct}})\dd z\\
    &\lesssim |x|\left(1+e^{Ct}\right)
\end{align*}
because $|\frac{x-z}{|x-z|^2}-\frac{z}{|z|^2}|\lesssim |x|(|z|^{-2}+|z|^{-1}|x-z|^{-1})$ and because the measure of $\supp \varpi_t$ does not depend on $t$.

In sum we have that \begin{align*}|v_t(x)-v_t(0)|\lesssim |x|e^{Ct}\end{align*} with the same constant $C$ for all solutions.

But then we have that the flow map $\mathcal{X}_t$ of every solution fulfils $|\mathcal{X}_t(x)|\geq  e^{-Ce^{Ct}}|x|$ by Gronwall, because the point-vortex velocity is radial around the origin and then we obtain that the property \eqref{ass later t} holds with the same constant for all solutions since $\varpi_t(\cdot)-\varpi_t(0)$ is transported.
\end{proof}

\subsection{Definition of the modulated distance $d$}\label {S24}
Let $\Psi_R$ and $\Psi$ denote the stream functions of $v_t(\cdot)-v_t(0)$ and $\bar u$, both normalized to be $0$ at $0$, i.e.\ 
\begin{align}\label{def psir}
\Psi_R(t,x)=\frac{1}{2\pi}\int_{\R^2} \left(\log|x-z|-\log|z|+\frac{x\cdot z}{|z|^2}\right) \varpi_t(z)\dd z
\end{align}
and
\begin{align*}
\Psi(t,x)=\frac{1}{2\pi}\log|x|+\Psi_R(t,x).
\end{align*}
We set  
\begin{align*}
d(t,x):=\exp(2\pi \Psi(t,x)).
\end{align*}
Note that \begin{align}
\bar u_t(x)\perp \nabla_x d(t,x)\label{ort}
\end{align}
for all $x$ and $t$ as one sees from the fact that $\bar{u}_t=\nabla_x^\perp \Psi$ and the chain rule.

Since $\varpi_t\in L^\infty$, we have \begin{align}
\norm{\Psi_R(t,\cdot)}_{L^\infty}\lesssim 1,\label{linfty psir}
\end{align}
uniformly in $t$.

%

These quantities all depend on $\varpi_t$. Below in the uniqueness proof, we will compare two solutions $\varpi^1$ and $\varpi^2$, to denote the dependence on the solution, we will use a superscript as in e.g.\ $d^1(t,x)$ and $d^2(t,x)$. We remark that this kind of modulated distance was already used in the author's earlier work \cite{meyer2025long} to study the long-time confinement of concentrated vortices.

Let us note the useful identities \begin{align}
    &\dfrac d(t,\mathcal{X}_t(x))=(\de_t d)(t,\mathcal{X}_t(x))\label{id flow}\\
    &d(t,x)=|x|e^{2\pi\Psi_R(t,x)}\label{psir id}
\end{align}
for future reference. \eqref{id flow} follows from \eqref{ort} and the chain rule.

At leading order, $d$ behaves like the distance $|x|$, but varies very slowly with time, the following lemma gives a precise account of this relationship:
\begin{lemma}
Suppose $\varpi$ is a solution to \eqref{xframe} on $[0,T]$, supported on $B_S(0)$, which fulfills \eqref{ass later t}.
Then\begin{align}
    &\left||x|-d(t,x)\right|\lesssim |x|^3 \label{d est 1}\\
    &\left|\nabla_x|x|-\nabla_x d(t,x)\right|\lesssim |x|^2\label{d est 2}\quad \text{for $x\neq 0$}
\end{align}
and \begin{align}
  \left|\dfrac d(t,x)\right|\lesssim |x|^3\label{d est 3},
\end{align}
for all $x\in B_{10S}(0)$ and $t\in [0,T]$, where the implicit constant depends on $S$ and $T$.
\end{lemma}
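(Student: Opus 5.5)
The plan is to reduce all three estimates to pointwise bounds on the regular stream function $\Psi_R$ and its time derivative, using the identity \eqref{psir id}, $d(t,x)=|x|e^{2\pi\Psi_R(t,x)}$, together with \eqref{linfty psir}. Concretely, I will show that for $x\in B_{10S}(0)$ and $t\in[0,T]$
\begin{align*}
|\Psi_R(t,x)|\lesssim |x|^2,\qquad |\nabla_x\Psi_R(t,x)|\lesssim |x|,\qquad |\de_t\Psi_R(t,x)|\lesssim |x|^2 .
\end{align*}
These give the lemma as follows. Since $\Psi_R$ is bounded, $\big||x|-d\big|=|x|\,|e^{2\pi\Psi_R}-1|\lesssim |x|\,|\Psi_R|\lesssim|x|^3$, which is \eqref{d est 1}. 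Next, $\nabla d=e^{2\pi\Psi_R}\nabla|x|+2\pi|x|e^{2\pi\Psi_R}\nabla\Psi_R$, and both pieces differ from $\nabla|x|$ by at most $|x|^2$, which is \eqref{d est 2}. Finally, $\de_t d=2\pi|x|e^{2\pi\Psi_R}\de_t\Psi_R\lesssim|x|^3$, which is \eqref{d est 3}. Away from the origin, say $|x|\geq\frac12$, all bounds are trivial from the uniform bounds, so the work is for small $|x|$.

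\textbf{Spatial bounds.} The gradient bound is immediate: $\nabla^\perp\Psi_R=v_t-v_t(0)$, so \eqref{est v} gives $|\nabla\Psi_R|\lesssim|x|$. Integrating this along the segment from $0$, where $\Psi_R(t,0)=0$, then gives $|\Psi_R|\lesssim|x|^2$. Alternatively, one can split $\varpi_t=\varpi_t(0)\mathds 1_{B_1}+(\varpi_t-\varpi_t(0)\mathds 1_{B_1})$ as in the proof of Lemma \ref{decay prop}. The disk part has the explicit radial stream function $\varpi_t(0)|x|^2/4$ for $|x|<1$; the linear term integrates to zero by symmetry. The remainder is handled with the kernel bound $|K(x,z)|\lesssim |x|^2|z|^{-2}$ for $|z|>2|x|$, where $K(x,z)=\log|x-z|-\log|z|+\frac{x\cdot z}{|z|^2}$, combined with $|\varpi_t(z)-\varpi_t(0)|\lesssim|z|^\alpha$ from \eqref{ass later t}.

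\textbf{Time derivative.} Here the assumption $\alpha>2$ enters. By Lemma \ref{decay prop}, $\varpi_t(0)$ is constant in time, and $\operatorname{div}\bar u=0$. Hence, distributionally,
\begin{align*}
\de_t\varpi_t=-\operatorname{div}\big(\bar u_t(\varpi_t-\varpi_t(0))\big).
\end{align*}
Testing against $K(x,\cdot)$ gives
\begin{align*}
2\pi\,\de_t\Psi_R(t,x)=\int\nabla_zK(x,z)\cdot\bar u_t(z)\,(\varpi_t(z)-\varpi_t(0))\dd z .
\end{align*}
To justify this, I mollify $K(x,\cdot)$ near its singularities at $z=x$ and $z=0$, use the weak formulation \eqref{def sol} in time-integrated form, and pass to the limit using the integrability shown below. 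This yields that $\Psi_R$ is Lipschitz in $t$ with the stated bound for a.e.\ $t$, which is all that \eqref{d est 3} needs.

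I then estimate the integrand using $|\bar u_t(z)|\lesssim 1+|z|^{-1}$ from \eqref{uni bd u} and $|\varpi_t-\varpi_t(0)|\lesssim\min(1,|z|^\alpha)$, in two regions.
\begin{itemize}
\item For $|z|>2|x|$, Taylor expansion gives $|\nabla_zK|\lesssim|x|^2|z|^{-3}$. The contribution is then $\lesssim|x|^2\int|z|^{\alpha-4}\min(1,\cdot)\dd z\lesssim|x|^2$. This integral converges at $0$ exactly because $\alpha>2$, and at infinity because the support is compact.
\item For $|z|<2|x|$, I use $|\nabla_zK|\lesssim|x-z|^{-1}+|z|^{-1}+|x||z|^{-2}$. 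The contribution is $\lesssim\int_{B_{2|x|}}\big(|x-z|^{-1}+|x||z|^{-2}\big)|z|^{\alpha-1}\dd z\lesssim|x|^\alpha\lesssim|x|^2$.
\end{itemize}

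I expect the main obstacle to be this time-derivative step. The pointwise estimates are routine once the splitting is chosen; the delicate part is rigorously justifying the formula for $\de_t\Psi_R$, given the weak-solution concept and the singular test function. That is precisely where the integrability afforded by $\alpha>2$ is needed.
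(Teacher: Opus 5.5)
Your proposal is correct and follows essentially the paper's route. Via \eqref{psir id}, everything reduces to $|\nabla\Psi_R|\lesssim|x|$ from \eqref{est v} (hence $|\Psi_R|\lesssim|x|^2$) and to $|\de_t\Psi_R|\lesssim|x|^2$; the paper proves the latter in \eqref{det d 1} as you do, integrating by parts against $K(x,\cdot)$ and using $|\nabla_zK|\lesssim|x|^2|z|^{-3}$ for $|z|\geq 2|x|$, with $\alpha>2$ giving convergence. The only difference is that you subtract $\varpi_t(0)$ globally rather than $\tilde\eta\varpi_t(0)$, which avoids the $\nabla\tilde\eta$ term. Note, though, that the far-field integral then converges because $|\nabla_zK|\lesssim|x|^2|z|^{-3}$ decays against the bounded $\bar u_t$, not because of compact support, since $\varpi_t-\varpi_t(0)$ is not compactly supported when $\varpi_t(0)\neq 0$.
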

In particular, it holds that \begin{align}
    d(t,x)\approx |x|.\label{d est 4}
\end{align}
for all sufficiently small $|x|$.
\begin{proof}
For \eqref{d est 1}, we use that $\Psi_R$ has a double zero at $0$, indeed, as already argued above in \eqref{est v}, its orthogonal derivative $v_t(\cdot)-v_t(0)$ has a first order zero at $0$. Therefore \eqref{psir id} implies \eqref{d est 1}.

Similarly, \eqref{d est 2} follows from the product rule and because $|\nabla_x\Psi_R(t,x)|=|v_t(x)-v_t(0)|\lesssim |x|$, by \eqref{est v}.

For \eqref{d est 3}, we use that $|\dfrac \Psi_R(t,x)|\lesssim |x|^2$, which is proven further below as part of Lemma \ref{det d} and which yields the statement by \eqref{psir id}.
\end{proof}

\subsection{Definition of the distance functional $D$}

To define the distance functional, we fix an initial datum $\varpi_0$ which fulfils the assumptions of the theorem.

Clearly, it suffices to show uniqueness on a short time interval, the global-in-time result then follows by applying the short-time result to the initial datum $\varpi_t$, which still fulfils the same assumptions by the Lemmata \ref{comp supp} and \ref{decay prop}.

Let us take two solutions $\varpi^1$ and $\varpi^2$ of \eqref{xframe} for this initial datum of which we wish to show that they are the same.

Let us denote by $\bar u_t^1,v_t^1,\bar u_t^2,v_t^2$ the corresponding velocities and let $\mathcal{X}_t^1$ and $\mathcal{X}_t^2$ be the corresponding flow maps. 

Let us take an $S_1\in (0,\frac{1}{10})$ such that for $x\in B_{S_1}(0)$ it holds that \begin{align}
    \frac{99}{100}|x|\leq |\mathcal{X}_t^1(x)|,|\mathcal{X}_t^2(x)|\leq \frac{101}{100}|x|\label{mod}
\end{align}
for all $t\in [0,1]$. This is possible by e.g.\ \eqref{id flow}, \eqref{d est 3} and \eqref{d est 4}.

This implies that \begin{align}
  \frac{12}{10}|x|\geq   \frac{11}{10}\min(|\mathcal{X}_t^1(x)|,|\mathcal{X}_t^2(x)|)\geq \max(|\mathcal{X}_t^1(x)|,|\mathcal{X}_t^2(x)|)\geq \frac{9}{10}|x|\label{mod1}
\end{align}
for all $t$ on some small time interval $[0,T]$ (where $T$ does not depend on $x$) and all $x\in \R^2$. Indeed for small $|x|$ it follows from \eqref{mod}, while for $|x|\geq S_1$
it holds because $u$ is uniformly bounded outside of $B_{S_1}(0)$.

After potentially lowering $S_1$, we can also assume by \eqref{d est 2} and \eqref{d est 4} that \begin{align}
    d^1(t,x),d^2(t,x)\in [\frac{1}{2}|x|,2|x|], \quad \left|\nabla_x d^1(t,x)-\nabla |x|\right|+\left|\nabla_x d^2(t,x)-\nabla |x|\right|\leq \frac{1}{10} \qquad \text{for $x\in B_{S_1}(0)$}\label{d mod S1}
\end{align}
and $t\in [0,T]$.

We then consider the following regions in $\{(x,y)\in \R^{2+2}\,|\,\frac{11}{10}\min(|x|,|y|)\geq \max(|x|,|y|)\}$: \begin{align*}
&\Omega:=\left\{(x,y)\,\Big|\, \max(|x|,|y|)\leq \frac{1}{10}S_1,\, \frac{11}{10}\min(|x|,|y|)\geq \max(|x|,|y|)\right\}\\
&\Omega':=\left\{(x,y)\,\Big|\,\max(|x|,|y|)\leq \frac{1}{5}S_1,\, \frac{11}{10}\min(|x|,|y|)\geq \max(|x|,|y|)\right\}.
\end{align*}
We take a smooth, nonnegative cut-off function $\eta$ with values in $[0,1]$, which is supported in $\Omega'$ and which equals $1$ on $\Omega$.

Let us fix an $S_2\geq 100$ such that $\supp \varpi_t^1,\varpi_t^2\subset B_{\frac{1}{2}S_2}(0)$ for all $t\in [0,T]$, this is possible by Lemma \ref{comp supp}.

Let us set \begin{align*}
    \beta:=\frac{\alpha-2}{100}>0.
\end{align*}
We then define \begin{align*}\begin{aligned}
\mel D(x,y)^2:= \eta(x,y)\biggl(\frac{1}{d^1(t,x)^{4-\beta}}|\Psi_R^1(t,x)-\Psi_R^2(t,x)|^2+\frac{1}{d^2(t,y)^{4-\beta}}|\Psi_R^1(t,y)-\Psi_R^2(t,y)|^2\\
&\quad+\frac{1}{\max(d^1(t,x),\,d^2(t,y))^{6-\beta}}|d^1(t,x)-d^2(t,y)|^2+\max(d^1(t,x),\,d^2(t,y))^{\beta}|x_\theta-y_\theta|^2\biggr)\\
&\quad+(1-\eta(x,y))|x-y|^2
\end{aligned}\end{align*}
and take $D$ itself as the non-negative square root of this.

The following lemma shows that this indeed controls the distance $|x|^{\beta/2-1}m(x,y)$.

\begin{lemma}
Whenever $\frac{11}{10}\min(|x|,|y|)\geq \max(|x|,|y|)$, we have  \begin{align}\label{D m}
D(x,y)\gtrsim |x|^{\frac{\beta}{2}-1}m(x,y)\gtrsim |x|^{\frac{\beta}{2}-1}|x-y| \quad \text{ in $B_{2S_2}(0)$}.
\end{align}
In particular, $D$ is non-negative and if $D(x,y)=0$, then $x=y$.
\end{lemma}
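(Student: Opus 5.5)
We split according to the value of the cut-off $\eta$. Throughout, $(x,y)$ satisfies $\frac{11}{10}\min(|x|,|y|)\geq\max(|x|,|y|)$ and lies in $B_{2S_2}(0)$, so $|x|\approx|y|\approx\max(|x|,|y|)$. The quantity $D^2$ is a convex combination,
\[
D^2=\eta\,A+(1-\eta)\,|x-y|^2,
\]
where $A$ is the bracketed expression. It therefore suffices to show two things: $A\gtrsim |x|^{\beta-2}m(x,y)^2$ on $\Omega'$ (where $\eta\neq 0$), and $|x-y|^2\gtrsim |x|^{\beta-2}m(x,y)^2$ on the support of $1-\eta$. Combining these gives $D^2\gtrsim |x|^{\beta-2}m^2$ wherever needed.

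The second inequality in \eqref{D m} is elementary and we prove it first. Using $|x-y|\leq \big||x|-|y|\big|+\max(|x|,|y|)|x_\theta-y_\theta|$ and $\max(|x|,|y|)\lesssim 1$ on $B_{2S_2}(0)$, we get $\max^{-2}\big||x|-|y|\big|\gtrsim\big||x|-|y|\big|$. Hence $m(x,y)\gtrsim|x-y|$.

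On the region away from $\Omega$, i.e.\ where $\eta<1$, we have $\max(|x|,|y|)\geq\frac{1}{10}S_1$, so $|x|\approx 1$ (with constants depending on $S_1,S_2$). We need $|x-y|\gtrsim m(x,y)$ there. The radial part is immediate: $\big||x|-|y|\big|\leq|x-y|$, and $\max^{-2}\lesssim 1$. For the angular part, both points lie at radius $\gtrsim S_1$, so $\max(|x|,|y|)|x_\theta-y_\theta|\lesssim |x-y|$. This holds because chord length and angle are comparable for points of comparable radius bounded below, and the angle is taken in $\R/2\pi\Z$ with $|x_\theta-y_\theta|\leq\pi$. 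Thus $(1-\eta)|x-y|^2\gtrsim(1-\eta)|x|^{\beta-2}m^2$.

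On $\Omega'$ we bound $A$ from below, dropping the two nonnegative $\Psi_R$-terms. By \eqref{d mod S1}, $d^i(t,\cdot)\approx|\cdot|$, so $\max(d^1(t,x),d^2(t,y))\approx|x|$. The angular term then gives
\[
|x|^{\beta}|x_\theta-y_\theta|^2=|x|^{\beta-2}\big(|x||x_\theta-y_\theta|\big)^2,
\]
which is exactly the angular part of $m$ with the right weight.

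The main obstacle is the radial term. It involves $|d^1(t,x)-d^2(t,y)|$ rather than $\big||x|-|y|\big|$, and the error between them mixes two different functions $d^1,d^2$. We handle this as follows:
\[
\big||x|-|y|\big|\leq|d^1(t,x)-d^2(t,y)|+\big||x|-d^1(t,x)\big|+\big||y|-d^2(t,y)\big|.
\]
The last two terms are not small relative to $\big||x|-|y|\big|$ in general; \eqref{d est 1} only bounds them by $|x|^3$. To fix this, we use \eqref{psir id}, namely $d^i(t,z)=|z|e^{2\pi\Psi_R^i(t,z)}$, and write
\[
d^2(t,y)-d^1(t,y)=|y|\big(e^{2\pi\Psi^2_R(t,y)}-e^{2\pi\Psi^1_R(t,y)}\big).
\]
By \eqref{linfty psir} this is $\lesssim|y|\,|\Psi_R^1(t,y)-\Psi_R^2(t,y)|$. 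Next, $|d^1(t,x)-d^1(t,y)|\gtrsim\big||x|-|y|\big|-C|x|^2|x-y|$ by \eqref{d est 2} and the mean value theorem along a path of comparable radius. Combining,
\[
\big||x|-|y|\big|\lesssim|d^1(t,x)-d^2(t,y)|+|x|\,|\Psi_R^1(t,y)-\Psi_R^2(t,y)|+|x|^2|x-y|.
\]
We then multiply by $|x|^{\beta/2-3}$ and square; we use the weight $|x|^{\beta/2-3}$ rather than the $|x|^{\beta/2-1}\max^{-2}$ in $m$ because it matches the powers appearing in $A$ directly.

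The first term is controlled by the $d$-term of $A$, whose weight is $\max^{-(6-\beta)}\approx|x|^{\beta-6}$. The second term is controlled by the $\Psi_R$-term at $y$, whose weight is $d^2(t,y)^{-(4-\beta)}\approx|x|^{\beta-4}$; this is why that term is included in $D$.

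For the third term we write $|x-y|\leq\big||x|-|y|\big|+|x||x_\theta-y_\theta|$. Its angular piece is controlled by the angular term of $A$. Its radial piece is $|x|^{\beta/2-1}\big||x|-|y|\big|$, which is small compared with the left side $|x|^{\beta/2-3}\big||x|-|y|\big|$ because $|x|\leq S_1$ is small. It can therefore be absorbed, possibly after shrinking $S_1$, which the construction allows.

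Finally, $D(x,y)=0$ forces $m(x,y)=0$ when the radii are comparable. When they are not comparable, $\eta=0$ by the support of $\eta$ and $D=|x-y|$. In either case $x=y$.
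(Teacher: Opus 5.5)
Your proof is correct and follows essentially the paper's route. You use the $\Psi_R$-term of $D$, via \eqref{psir id} and \eqref{linfty psir}, to exchange $d^1$ and $d^2$ at a single point (the paper does this at $x$, you at $y$); \eqref{d est 2} then shows that $d^i(t,x)-d^i(t,y)$ differs from $|x|-|y|$ only by $|x|^2$ times the radial difference plus $|x|^3|x_\theta-y_\theta|$. The paper handles this last step with the auxiliary point $y'$ and the radial monotonicity from \eqref{d mod S1}, where you use the mean value theorem and absorption for small $S_1$; your explicit convex-combination treatment of the region where $\eta<1$ (via $|x-y|\gtrsim m(x,y)$ for comparable radii bounded below) is, if anything, more careful than the paper's one-line remark.
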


\begin{proof}
Recall the definition \eqref{def m} of $m$.
Let us first note that $m(x,y)\gtrsim |x-y|$ on any compact set and that the statement is hence trivial outside of $\Omega'$. It therefore suffices to show that \begin{align}\begin{aligned}
 \mel   \frac{1}{d^1(t,x)^{4-\beta}}|\Psi_R^1(t,x)-\Psi_R^2(t,x)|^2+\frac{1}{d^2(t,y)^{4-\beta}}|\Psi_R^1(t,y)-\Psi_R^2(t,y)|^2\\
&\quad+\frac{1}{\max(d^1(t,x),\,d^2(t,y))^{6-\beta}}|d^1(t,x)-d^2(t,y)|^2+\max(d^1(t,x),\,d^2(t,y))^{\beta}|x_\theta-y_\theta|^2\\
&\gtrsim |x|^{\frac{\beta}{2}-1}m(x,y)\label{nec est}\end{aligned}\end{align}
in $\Omega'$.

We directly control the difference of the polar angles by \eqref{d mod S1}.

To control the difference of the moduli, we note that by \eqref{psir id} and \eqref{d mod S1}, it holds that \begin{align*}
\mel\frac{1}{d^1(t,x)^{2-\frac{\beta}{2}}}\left|\Psi_R^1(t,x)-\Psi_R^2(t,x)\right|\gtrsim |x|^{-2+\frac{\beta}{2}}\left|e^{2\pi\Psi_R^1(t,x)}-e^{2\pi\Psi_R^2(t,x)}\right|\\
&=|x|^{-3+\frac{\beta}{2}}\left|d^1(t,x)-d^2(t,x)\right|,
\end{align*}
since $\Psi_R^1$ and $\Psi_R^2$ are uniformly bounded by \eqref{linfty psir}.

Therefore, using \eqref{d mod S1} again, we see that 
\begin{align}\begin{aligned}
\mel\frac{1}{\max(d^1(t,x),\,d^2(t,y))^{3-\frac{\beta}{2}}}|d^1(t,x)-d^2(t,y)|+\frac{1}{d^1(t,x)^{2-\frac{\beta}{2}}}|\Psi_R^1(t,x)-\Psi_R^2(t,x)|\\
&\gtrsim |x|^{-3+\frac{\beta}{2}}|d^2(t,x)-d^2(t,y)|.\label{d1est}
\end{aligned}\end{align}
We now claim that for $(x,y)\in \Omega'$, it holds that \begin{align}
\frac{1}{|x|^2}|d^2(t,x)-d^2(t,y)|+|x||x_\theta-y_\theta| \gtrsim  |x|^{-2}||x|-|y||\label{claim 1}.
\end{align}
To see this, we let $y'$ be the unique point such that $y_\theta'=y_\theta$ and $|y'|=|x|$. Then we conclude from the fact that $|z^\perp\cdot\nabla d^2(t,z)|\lesssim |x|^{3}$ on the circle segment connecting $y'$ and $x$ by \eqref{d est 2}, we have that \begin{align}
|d^2(t,x)-d^2(t,y')|\lesssim |x|^3|x_\theta-y_\theta'|,\label{yprime1}
\end{align}
since the circle segment has length $\approx |x||x_\theta-y_\theta'|$.

On the other hand, we also have $z\cdot \nabla_z d^2(t,z)\approx |z|$ for sufficiently small $z$ by \eqref{d mod S1} and hence it holds that   \begin{align}
    |d^2(t,y)-d^2(t,y')|\gtrsim ||y|-|y'||=||y|-|x||.\label{yprime2}
\end{align}
Combining \eqref{yprime1} and \eqref{yprime2} and using the triangle inequality yields the claim \eqref{claim 1}.

We hence see from \eqref{d1est} and \eqref{claim 1}, that for $|x|\leq S_1$, we have that \begin{align*}
    D(x,y)\gtrsim |x|^{\frac{\beta}{2}-3}\left||x|-|y|\right|,
\end{align*}
showing the remaining part of \eqref{nec est}.
\end{proof}

\subsection{The energy functional $E_t$}
Our energy functional for the proof of uniqueness is  \begin{align}\label{def E}
E_t:=\int_{B_{S_2}(0)} D(\mathcal{X}_t^1(x),\mathcal{X}_t^2(x))^2\dd x.
\end{align}
We observe the following: \begin{itemize}
\item $E_0=0$, since $D(x,x)=0$ if $\Psi_R^1=\Psi_R^2$.
\item If $E_t=0$, then both flow maps and hence also $\varpi_t^1$ and $\varpi_t^2$ agree.
\item $E_t$ can be infinite if $\mathcal{X}_t^1$ and $\mathcal{X}_t^2$ are diffeomorphisms that do not come from a solution to \eqref{xframe}. If they do come from two solutions to \eqref{xframe} with the same initial datum, then Lemma \ref{e fin} below shows that $E_t$ is finite for all $t$.
\end{itemize}

To prove uniqueness, it suffices to show that \begin{align}
\dfrac E_t\lesssim E_t \log\left(2+\frac{1}{E_t}\right)  \quad \text{ for $t\in [0,T]$}.\label{dest}
\end{align}
Gronwall's lemma then implies $E_t=0$ for $t\in [0,T]$, showing that $\varpi_t^1=\varpi_t^2$ and that uniqueness holds.

As already explained above, uniqueness after the time $T$ follows from applying this short time statement to the initial datum $\varpi_T$ (with different constants $S_1$ and $S_2$) since all the assumptions on the initial data are propagated. More precisely, this shows that the time interval on which $\varpi^1=\varpi^2$ is an open, nonempty subset of $\R_{\geq 0}$. Since the solutions are also continuous in time in e.g.\ $W^{-1,1}$, as the time derivatives are uniformly bounded in that space, the time interval on which $\varpi^1=\varpi^2$ is also closed. Since $\R_{\geq 0}$ is connected, this time interval must be the full $\R_{\geq 0}$.

To show this, we split $E_t=\sum_{i=1}^5E_t^i$ as \begin{align}
\begin{aligned}
&E_t^1:=\int_{B_{S_2}(0)}\eta(\mathcal{X}_t^1(x),\mathcal{X}_t^2(x))\frac{1}{d^1(t,\mathcal{X}_t^1(x))^{4-\beta}}|\Psi_R^1(t,\mathcal{X}_t^1(x))-\Psi_R^2(t,\mathcal{X}_t^1(x))|^2\dx\\
&E_t^2:=\int_{B_{S_2}(0)}\eta(\mathcal{X}_t^1(x),\mathcal{X}_t^2(x))\frac{1}{d^2(t,\mathcal{X}_t^2(x))^{4-\beta}}|\Psi_R^1(t,\mathcal{X}_t^2(x))-\Psi_R^2(t,\mathcal{X}_t^2(x))|^2\dx\\
&E_t^3:=\int_{B_{S_2}(0)}\frac{\eta(\mathcal{X}_t^1(x),\mathcal{X}_t^2(x))}{\max(d^1(t,\mathcal{X}_t^1(x)),\,d^2(t,\mathcal{X}_t^2(x)))^{6-\beta}}|d^1(t,\mathcal{X}_t^1(x))-d^2(t,\mathcal{X}_t^2(x))|^2\dx\\
&E_t^4:=\int_{B_{S_2}(0)}\eta(\mathcal{X}_t^1(x),\mathcal{X}_t^2(x))\max(d^1(t,\mathcal{X}_t^1(x)),\,d^2(t,\mathcal{X}_t^2(x)))^\beta|\mathcal{X}_t^1(x)_\theta-\mathcal{X}_t^2(x)_\theta|^2\dd x\\
&E_t^5:=\int_{B_{S_2}(0)}(1-\eta)(\mathcal{X}_t^1(x),\mathcal{X}_t^2(x))|\mathcal{X}_t^1(x)-\mathcal{X}_t^2(x)|^2\dd x,\label{split E}
\end{aligned}\end{align}
and estimate the time derivative of each summand separately.

\section{Proof of the differential estimate \eqref{dest} for $E_t$}\label{S3}

\subsection{Preparations and Notation}

To avoid line breaks, we introduce a few additional shorthands.

We set \begin{align*}
 &   k_t(x):=m(\mathcal{X}_t^1(x),\mathcal{X}_t^2(x))\\
&    \mathfrak{d}_t(x):=\max\left(d^1(t,\mathcal{X}_t^1(x)),d^2(t,\mathcal{X}_t^2(x))\right).
\end{align*}
Note that \eqref{d mod S1} and \eqref{mod} imply that for $|x|\leq \frac{1}{2}S_1$, it holds that \begin{align}
    \mathfrak{d}_t(x)\approx |x|. \label{mfrakd}
\end{align}

\begin{lemma}\label{e fin}
For $t\in [0,T]$, it holds that \begin{align*}
D(\mathcal{X}_t^1(x),\mathcal{X}_t^2(x))^2+E_t<\infty
\end{align*}
for all $x\in B_{S_2}(0)$.
\end{lemma}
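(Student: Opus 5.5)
The plan is to bound each of the five terms in the integrand of $D(\mathcal{X}_t^1(x),\mathcal{X}_t^2(x))^2$ pointwise by $C|x|^{\beta-2}$ for $x\in B_{S_2}(0)$, $x\neq0$. This gives finiteness for every $x\neq 0$. Since $\beta-2>-2$, the bound is integrable on $B_{S_2}(0)\subset\R^2$, so $E_t<\infty$. At $x=0$ both flow maps equal $0$, and I treat this point by convention: it is a null set, and $\eta$ can be taken to vanish there, or we read $D(0,0)=0$.

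\textbf{The terms with $1-\eta$ and the angular term.} Where $\eta\neq1$, the integrand is $(1-\eta)|\mathcal{X}_t^1(x)-\mathcal{X}_t^2(x)|^2\leq (2\cdot\frac{12}{10}S_2)^2$ by \eqref{mod1}, which is bounded. On $\supp\eta\subset\Omega'$, both points lie in $B_{S_1/5}$. By \eqref{mod} we have $|x|\approx|\mathcal{X}_t^i(x)|$, and by \eqref{d mod S1} we have $d^i\approx|\cdot|$, so $\mathfrak{d}_t(x)\approx|x|$ as in \eqref{mfrakd}. The angular term is then at most $\pi^2\mathfrak{d}_t(x)^\beta\lesssim1$.

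\textbf{The $\Psi_R$ terms.} Each $\Psi_R^i$ vanishes to second order at $0$: it is $0$ at $0$, and its gradient $v^i_t-v^i_t(0)$ is $O(|x|)$ by \eqref{est v}. Hence $|\Psi_R^i(t,z)|\lesssim|z|^2$. This gives
\[
\frac{|\Psi_R^1-\Psi_R^2|^2(t,\mathcal{X}^j_t(x))}{d^j(t,\mathcal{X}_t^j(x))^{4-\beta}}\lesssim\frac{|x|^4}{|x|^{4-\beta}}=|x|^\beta .
\]

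\textbf{The $d$-difference term.} This is the only term that is not immediate, since we cannot use smallness of $|\mathcal{X}^1_t(x)|-|\mathcal{X}^2_t(x)|$ beyond $O(|x|)$. The approach uses \eqref{id flow} together with \eqref{d est 3}: along each flow, $|\frac{d}{dt}d^i(t,\mathcal{X}^i_t(x))|\lesssim|\mathcal{X}^i_t(x)|^3\lesssim|x|^3$. Moreover $d^1(0,\cdot)=d^2(0,\cdot)$, because both solutions share the same initial datum. Integrating in time gives $|d^1(t,\mathcal{X}^1_t(x))-d^2(t,\mathcal{X}^2_t(x))|\lesssim t|x|^3$. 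Dividing by $\mathfrak{d}_t^{6-\beta}\approx|x|^{6-\beta}$ yields a bound $\lesssim|x|^{\beta}$.

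Collecting the four cases, the integrand is uniformly bounded on $B_{S_2}(0)$, which gives both claims. The \emph{main obstacle} is the $d$-difference term. Without the flow-invariance \eqref{id flow} and the cubic bound \eqref{d est 3}, the crude estimate would give only $|x|^2/|x|^{6-\beta}$, which is not integrable. Everything else follows from the second-order vanishing of $\Psi_R$ and the comparabilities \eqref{mod}, \eqref{d mod S1}.
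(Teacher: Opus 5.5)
Your proof is correct and follows essentially the same route as the paper: each summand is bounded pointwise by $\lesssim|x|^\beta$ or $\lesssim 1$, and the decisive step for the $d$-difference term is \eqref{id flow} combined with \eqref{d est 3} and $d^1(0,\cdot)=d^2(0,\cdot)$. The differences are cosmetic. You obtain $|\Psi_R^i(t,z)|\lesssim|z|^2$ directly from the double zero via \eqref{est v}, rather than through \eqref{psir id} and \eqref{d est 1}, which rest on the same fact. At $x=0$ only your second convention works, because $(0,0)\in\Omega$ forces $\eta(0,0)=1$; this is harmless since $\{0\}$ is a null set.
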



\begin{proof}
Obviously, finiteness of $D$ implies finiteness of $E_t$, so it suffices to argue the former. 

The finiteness of the summand corresponding to $E_t^5$ is a trivial consequence of the boundedness of $\bar u$.

It follows from \eqref{psir id}, \eqref{d est 1} and \eqref{mfrakd} that \begin{align*}
    \mel\frac{1}{d^1(t,x)^{4-\beta}}\left|\Psi_R^1(t,\mathcal{X}_t^1(x))-\Psi_R^2(t,\mathcal{X}_t^1(x))\right|^2\\
    &\lesssim |x|^{-6+\beta} \left(\left||\mathcal{X}_t^1(x)|-d^1(t,\mathcal{X}_t^1(x))\right|^2+\left||\mathcal{X}_t^1(x)|-d^2(t,\mathcal{X}_t^1(x))\right|^2\right)\lesssim 
    |x|^{\beta},
\end{align*}    
  while it follows from \eqref{id flow}, \eqref{d est 1}, \eqref{d est 3} and \eqref{mfrakd} that \begin{align*}
  \mel\left|d^1(t,\mathcal{X}_t^1(x))-d^2(t,\mathcal{X}_t^2(x))\right|\lesssim \left|d^1(t,\mathcal{X}_t^1(x))-|x|\right|+\left|d^2(t,\mathcal{X}_t^2(x))-|x|\right|\\
  &\lesssim\left|d^1(0,x)-|x|\right|+\int_0^t \left|(\de_t d^1)(s,\mathcal{X}_s^1(x))\right|+\left|(\de_t d^2)(s,\mathcal{X}_s^2(x))\right|\dd s\lesssim |x|^3,\end{align*}
which also used that $d^1(0,\cdot)=d^2(0,\cdot)$.
By \eqref{mfrakd}, this implies that   \begin{align*}
    \frac{1}{\max(d^1(t,\mathcal{X}_t^1(x)),\,d^2(t,\mathcal{X}_t^2(x)))^{6-\beta}}\left|d^1(t,\mathcal{X}_t^1(x))-d^2(t,\mathcal{X}_t^2(x))\right|^2\lesssim |x|^{\beta}.
\end{align*}
The finiteness of the angular term is trivial. 
\end{proof}

\begin{lemma}\label{ll lemma}
For $t\in [0,T]$ and $\max(|x|,|y|)\leq 2\min(|x|,|y|)\leq 2S_2$, we have \begin{align}
  &  \left|\bar u_t(x)-\bar u_t(y)\right|\lesssim  |x|^{-2}m(x,y)\log\left(2+ \frac{1}{m(x,y)}\right)\label{ll1}\\
  & \left|\bar u_t(x)\cdot \frac{x^\perp}{|x|^2}-\bar u_t(y)\cdot \frac{y^\perp}{|y|^2}\right|\lesssim  |x|^{-1}m(x,y)\log\left(2+\frac{1}{m(x,y)}\right)\label{ll2},
\end{align}
in the sense that these estimates hold for both $\bar u_t^1$ and $\bar u_t^2$ in place of $\bar u_t$.
\end{lemma}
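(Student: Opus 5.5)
We split $\bar u_t=\frac{x^\perp}{2\pi|x|^2}+(v_t-v_t(0))$ and estimate the point-vortex part and the regular part separately, since each estimate is linear in $\bar u_t$. Throughout, $|x|\approx|y|$ because $\max(|x|,|y|)\le 2\min(|x|,|y|)$. Set $r=\max(|x|,|y|)$, and write $\delta\theta=|x_\theta-y_\theta|$ and $\delta r=||x|-|y||$, so that $m(x,y)=r\,\delta\theta+r^{-2}\delta r$.

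\emph{Point-vortex part.} For $K(x)=\frac{x^\perp}{2\pi|x|^2}$ we compare $x$ and $y$ through the intermediate point $y'$ with $y'_\theta=y_\theta$ and $|y'|=|x|$. Rotating along the circle of radius $|x|$ gives $|K(x)-K(y')|=|x|^{-1}|e^{ix_\theta}-e^{iy_\theta}|/(2\pi)\lesssim |x|^{-1}\delta\theta$. The radial move gives $|K(y')-K(y)|\lesssim |x|^{-2}\delta r$. Hence
\[
|K(x)-K(y)|\lesssim |x|^{-2}\bigl(|x|\delta\theta+\delta r\bigr)\le |x|^{-2}\bigl(r\delta\theta + r^{-2}\delta r\bigr)\cdot C,
\]
using $\delta r\le r^{-2}\delta r$ for $r\le 2S_2$ (up to a constant depending on $S_2$) and $|x|\delta\theta\lesssim r\delta\theta$. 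This is even stronger than \eqref{ll1}. For \eqref{ll2}, note that $K(x)\cdot\frac{x^\perp}{|x|^2}=\frac{1}{2\pi|x|^2}$ is purely radial, so the difference is $\lesssim |x|^{-3}\delta r\lesssim |x|^{-1}\cdot r^{-2}\delta r\le |x|^{-1}m(x,y)$. The key point is that the angular variation cancels exactly here, which is the whole reason for the choice of the metric $m$.

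\emph{Regular part.} We use the log-Lipschitz bound for $v_t$ from Lemma \ref{ll lemma app}: $|v_t(x)-v_t(y)|\lesssim |x-y|\log(2+\frac1{|x-y|})$. Since $|x-y|\lesssim r\delta\theta+\delta r\lesssim m(x,y)$ (with a constant depending on $S_2$), and $s\mapsto s\log(2+1/s)$ is increasing, we obtain $|v_t(x)-v_t(y)|\lesssim m\log(2+1/m)\lesssim |x|^{-2} m\log(2+1/m)$ on $B_{2S_2}$. This gives \eqref{ll1}.

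For \eqref{ll2} with $w_t:=v_t-v_t(0)$, we write
\[
w_t(x)\cdot\tfrac{x^\perp}{|x|^2}-w_t(y)\cdot\tfrac{y^\perp}{|y|^2}=(w_t(x)-w_t(y))\cdot\tfrac{x^\perp}{|x|^2}+w_t(y)\cdot\Bigl(\tfrac{x^\perp}{|x|^2}-\tfrac{y^\perp}{|y|^2}\Bigr).
\]
The first term is $\lesssim |x|^{-1}m\log(2+1/m)$ by the previous paragraph. For the second term we use \eqref{est v}, namely $|w_t(y)|\lesssim |y|$, together with the bound $|\frac{x^\perp}{|x|^2}-\frac{y^\perp}{|y|^2}|\lesssim |x|^{-2}(|x|\delta\theta+\delta r)$ computed above. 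This gives $\lesssim |x|^{-1}(|x|\delta\theta+\delta r)\lesssim |x|^{-1}m$.

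The only genuinely delicate step is using \eqref{est v} (the first-order vanishing of $v_t-v_t(0)$, which relies on Lemma \ref{decay prop}) to absorb the $|x|^{-2}$ factor in the second term. Everything else reduces to comparing $|x-y|$ with $m$ via the intermediate point $y'$, which requires care only with the angle convention on $\R/2\pi\Z$: $|e^{ia}-e^{ib}|\le|a-b|$ for the circle distance.
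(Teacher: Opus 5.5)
Your proof is correct and follows essentially the same route as the paper. Like the paper, you split $\bar u_t$ into the point-vortex kernel and $v_t-v_t(0)$, use the log-Lipschitz bound for $v_t$ with $|x-y|\lesssim m(x,y)$ on bounded sets, handle the non-cancelling term $w_t(y)\cdot\bigl(\frac{x^\perp}{|x|^2}-\frac{y^\perp}{|y|^2}\bigr)$ in \eqref{ll2} via \eqref{est v}, and note that the point-vortex contribution to \eqref{ll2} is the purely radial quantity $\frac{1}{2\pi|x|^2}$. Your intermediate point $y'$ just makes explicit the comparison $|x-y|\le |x|\,|x_\theta-y_\theta|+\bigl||x|-|y|\bigr|\lesssim m(x,y)$, which the paper invokes ``by definition''.
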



%

\begin{proof}
We have \begin{align*}
\left|v_t(x)-v_t(y)\right|\lesssim |x-y|\log\left(2+\frac{1}{|x-y|}\right)
\end{align*}
by Lemma \ref{ll lemma app}. Since $|x-y|\lesssim m(x,y)$ on compact sets by definition, we can estimate this part with \begin{align*}
 \left|v_t(x)-v_t(y)\right|\lesssim   m(x,y)\log\left(2+ \frac{1}{m(x,y)}\right).
\end{align*}    
    

\noindent For the point-vortex velocity, we note that \begin{align*}
\left|\frac{x^\perp}{|x|^2}-\frac{y^\perp}{|y|^2}\right|\leq \frac{|x-y|}{|x|^2}+\frac{|y|||x|^2-|y|^2|}{|x|^2|y|^2}\lesssim \frac{|x-y|}{|x|^2}
\end{align*}
by the assumption that $|x|$ and $|y|$ are of the same scale.
This has the desired bound \eqref{ll1}.


For the estimate \eqref{ll2}, we note that \begin{align*}
\left|(v_t(x)-v_t(0))\cdot \frac{x^\perp}{|x|}-(v_t(y)-v_t(0))\cdot \frac{y^\perp}{|y|}\right|\leq \left|\frac{x}{|x|^2}-\frac{y}{|y|^2}\right||v_t(y)-v_t(0)|+|x|^{-1}|v_t(x)-v_t(y)|.
\end{align*}
The first difference is estimated as \begin{align*}
  \left|\frac{x}{|x|^2}-\frac{y}{|y|^2}\right||v_t(y)-v_t(0)|\lesssim |x|^{-1}|x-y|\lesssim |x|^{-1}m(x,y)  
\end{align*}
with \eqref{est v}, and the fact that $\left|\frac{x}{|x|^2}-\frac{y}{|y|^2}\right|\lesssim |x|^{-2}|x-y|$.

The other term is estimated as above.

Regarding the contribution of the point vortex, we note that \begin{align*}
\left|\frac{x^\perp}{|x|^2}\cdot \frac{x^\perp}{|x|^2}-\frac{y^\perp}{|y|^2}\cdot \frac{y^\perp}{|y|^2}\right|=\left|\frac{1}{|x|^2}-\frac{1}{|y|^2}\right|\lesssim |x|^{-3}||x|-|y||\lesssim |x|^{-1}m(x,y).
\end{align*}
\end{proof}

\subsection{A priori bounds on $d$, $\Psi_R$ and $v_t$}
In this subsection, we collect some bounds on the time derivatives of the $d$ and $\Psi_R$, which will be used in the proof of the differential estimate on $E_t$ below. All of their proofs are in principle straightforward, but somewhat tedious and therefore postponed to Section \ref{S4} further below.
 
\begin{lemma}\label{deri dpsi}
It holds that \begin{align}
&\int_{B_{\frac{1}{2}}(0)}|\de_t\Psi_R^1(t,x)-\de_t\Psi_R^2(t,x)|^2|x|^{\beta-4}\dd x\lesssim E_t\left(\log\left(2+\frac{1}{E_t}\right)\right)^2\label{deri dpsi1}\\
&\int_{B_{\frac{1}{2}}(0)}|\Psi_R^1(t,x)-\Psi_R^2(t,x)|^2|x|^{\beta-4}\dd x\lesssim E_t.\label{deri dpsi2}
\end{align}
\end{lemma}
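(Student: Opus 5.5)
The plan is to express both differences through the flow maps and the initial datum, and then estimate them with a weighted Schur-type argument. By the Lagrangian representation $\varpi^i_t=\varpi_0\circ(\mathcal{X}^i_t)^{-1}$ and volume preservation,
\begin{align*}
\Psi_R^1(t,x)-\Psi_R^2(t,x)=\frac{1}{2\pi}\int \big(K(x,\mathcal{X}^1_t(y))-K(x,\mathcal{X}^2_t(y))\big)\varpi_0(y)\dd y,
\end{align*}
where $K(x,z)=\log|x-z|-\log|z|+\frac{x\cdot z}{|z|^2}$ is the kernel from \eqref{def psir}.

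The first step is to replace $\varpi_0$ by $\varpi_0-\varpi_0(0)\mathds{1}_{B_r}$ for a small $r\le S_1/10$. The subtracted constant part contributes
\begin{align*}
\varpi_0(0)\Big(\int_{(\mathcal{X}^1_t)^{-1}(B_r)}-\int_{(\mathcal{X}^2_t)^{-1}(B_r)}\Big)\dots,
\end{align*}
and since both flows preserve measure this reduces to an integral of $K(x,\cdot)$ over symmetric differences of sets close to $\partial B_r$. There the flow maps are comparable, by \eqref{mod}, and $K(x,\cdot)$ is smooth for $|x|\ll r$. So this part is bounded by $|x|^2\int_{|y|\approx r}|\mathcal{X}^1_t(y)-\mathcal{X}^2_t(y)|\dd y\lesssim |x|^2E_t^{1/2}$, which is harmless against the weight $|x|^{\beta-4}$.

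For the remaining part I use $|\varpi_0(y)-\varpi_0(0)|\lesssim\min(1,|y|^\alpha)$ together with the mean value theorem in $z$. The kernel gradient satisfies
\begin{align*}
|\nabla_zK(x,z)|\lesssim \frac{|x|^2}{|z|^3}\ \text{for } |z|\gg|x|,\qquad |\nabla_zK(x,z)|\lesssim |x-z|^{-1}+\frac{|x|}{|z|^2}\ \text{otherwise}.
\end{align*}
From \eqref{D m} and the definition of $m$ I get $|\mathcal{X}^1_t(y)-\mathcal{X}^2_t(y)|\lesssim|y|\,|\cdot_\theta\text{-diff}|+|y|^2\big||\mathcal{X}^1|-|\mathcal{X}^2|\big|\lesssim |y|^{1-\beta/2}D(y)$, where $D(y):=D(\mathcal{X}^1_t(y),\mathcal{X}^2_t(y))$. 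Inserting this gives a bound
\begin{align*}
|\Psi^1_R-\Psi^2_R|(x)\lesssim\int \mathcal{K}(x,y)\,|y|^{\alpha+1-\beta/2}D(y)\dd y
\end{align*}
with a homogeneous (log-integrable) kernel $\mathcal{K}$. \eqref{deri dpsi2} then follows from Cauchy--Schwarz, or equivalently a Schur test with power weights, in $L^2(|x|^{\beta-4}\dd x)\leftarrow L^2(\dd y)$. The condition $\alpha>2\geq 2-\beta$ is exactly what makes the scaling exponents integrable at $0$ in both variables. Points with $|y|\gtrsim S_1$ are handled trivially, since there $D\approx|\mathcal{X}^1-\mathcal{X}^2|$.

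For \eqref{deri dpsi1} I differentiate under the integral, using the weak formulation. This gives
\begin{align*}
\de_t\Psi_R^i=\int\nabla_zK(x,\mathcal{X}^i_t(y))\cdot\bar u^i_t(\mathcal{X}^i_t(y))\big(\varpi_0(y)-\varpi_0(0)\big)\dd y
\end{align*}
plus a boundary term from the constant part, which I treat as above using $\operatorname{div}\bar u=0$. I then split the difference into three terms:
\begin{align*}
&[\nabla_zK(x,\mathcal{X}^1)-\nabla_zK(x,\mathcal{X}^2)]\cdot\bar u^1(\mathcal{X}^1),\\
&\nabla_zK(x,\mathcal{X}^2)\cdot[\bar u^1(\mathcal{X}^1)-\bar u^1(\mathcal{X}^2)],\\
&\nabla_zK(x,\mathcal{X}^2)\cdot[\bar u^1-\bar u^2](\mathcal{X}^2).
\end{align*}
The first term uses $\nabla_z^2K$ and $|\bar u|\lesssim|y|^{-1}$, which costs two extra powers of $|y|$. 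The second uses \eqref{ll1}, giving $|y|^{-2}m\log(2+1/m)$, which again costs two powers. After the concavity of $s\mapsto s\log^2(2+1/s)$ and Jensen, this produces the factor $\log^2(2+1/E_t)$. The third term is $\nabla^\perp(\Psi_R^1-\Psi_R^2)$ evaluated at $\mathcal{X}^2$. I represent it by the same Lagrangian formula with the kernel $\nabla_x K$ and estimate it as in the previous step, with one more derivative.

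The main obstacle is the power counting near $0$. The velocity $|\bar u|\sim|y|^{-1}$ and the extra kernel derivative together lose $|y|^{-2}$ relative to the static estimate, so the whole argument hinges on the decay $|y|^\alpha$ with $\alpha>2$. With $\beta=(\alpha-2)/100$ this leaves a margin of order $\alpha-2$ in every Schur integral. I would first verify the critical regime $|y|\approx|x|$, where the kernel singularity $|x-z|^{-2}$ from $\nabla_z^2K$ appears. I expect to handle it by using \eqref{ll1}-type log-Lipschitz cancellation rather than the crude second-derivative bound.
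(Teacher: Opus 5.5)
Your splitting of $\de_t\Psi_R^1-\de_t\Psi_R^2$ into a kernel-difference term, a velocity-increment term and a velocity-difference term matches the paper's splitting into $p_1+p_3+p_5$, $p_2+p_4+p_6$ (inside $P$) and $Q+R$. What you sketch for the velocity-increment term (\eqref{ll1} plus Jensen) and for the off-diagonal parts of the first term is also what the paper does. For \eqref{deri dpsi2} you take a different route. The paper applies Hardy's inequality, using that $\Psi_R^1-\Psi_R^2$ vanishes at $0$ with $\nabla^\perp$ equal to $\bar u_t^1-\bar u_t^2$, and then Lemma~\ref{weighted l2}, whose proof relies on Lemma~\ref{l2 lemma}. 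You estimate the logarithmic kernel directly. That works and is more elementary, because $\nabla_zK$ is only singular like $|x-z|^{-1}$. To make it rigorous, use endpoint bounds such as $\bigl|\log|x-z_1|-\log|x-z_2|\bigr|\le|z_1-z_2|\bigl(|x-z_1|^{-1}+|x-z_2|^{-1}\bigr)$ and treat $|\mathcal{X}_t^1(y)-\mathcal{X}_t^2(y)|\gtrsim|y|$ as a separate case; do not apply the mean value theorem on segments that may pass through $x$ or $0$. Also take $r\approx 1$, so the annulus carrying the constant part stays away from $x\in B_{1/2}(0)$.

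There is a genuine gap in \eqref{deri dpsi1}, exactly at the step you postpone: the near-diagonal region $|y|\approx|x|$ of your first term, and your third term. There $\nabla_zK(x,\cdot)$ contains the Biot--Savart kernel $\frac{x-z}{|x-z|^2}$. Displacing the sources gives at best the pointwise bound $|\mathcal{X}_t^1(y)-\mathcal{X}_t^2(y)|\,|x-\mathcal{X}_t^i(y)|^{-2}$, which is not locally integrable in two dimensions. The bound \eqref{ll1} cannot fix this: it controls one fixed field at two evaluation points, whereas here $x$ is fixed and the sources move by a $y$-dependent amount. After dualising you face $\int f(y)\bigl(\BS[g](\mathcal{X}_t^1(y))-\BS[g](\mathcal{X}_t^2(y))\bigr)\dd y$ with $g\in L^2$. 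Then $\BS[g]$ is only in $\dot{H}^1$, not log-Lipschitz. Taking $g\in L^\infty$ would only give an $L^1_x$ bound, which is not enough for the weighted $L^2_x$ estimate.

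The missing ingredient is Lemma~\ref{l2 lemma}, $\norm{\BS[f\circ\Phi_1]-\BS[f\circ\Phi_2]}_{L^2}\lesssim\norm{f}_{L^\infty}\norm{\Phi_1^{-1}-\Phi_2^{-1}}_{L^2(\supp f)}$. It is proved by duality and the maximal-function inequality $|h(a)-h(b)|\lesssim|a-b|\bigl(M|\nabla h|(a)+M|\nabla h|(b)\bigr)$. The paper applies it on each fattened dyadic annulus $\bar{A}_n$ (where $|y|\approx|x|\approx2^{-n}$) with $f=\mathds{1}_{\bar{A}_n}\,\bar u_t^1(\mathcal{X}_t^1)(\varpi_0-\varpi_0(0))$, for which $\norm{f}_{L^\infty}\lesssim2^{-(\alpha-1)n}$. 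Together with \eqref{D m} this produces a factor $2^{-(\alpha-2)n}$, summable precisely because $\alpha>2$.

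Your plan for the third term (a Lagrangian formula with $\nabla_xK$ and ``one more derivative'') runs into the same non-integrable $|w-z|^{-2}$. The paper instead keeps that term Eulerian, as $\int\nabla_zK(x,z)\cdot(\bar u_t^1-\bar u_t^2)(z)\bigl(\varpi_t^2(z)-\tilde\eta(z)\varpi_t^2(0)\bigr)\dd z$ with a fixed radial cutoff $\tilde\eta$. There only the integrable $|x-z|^{-1}$ appears, and the paper uses the $|z|^\alpha$ decay together with $\norm{v_t^1-v_t^2}_{L^2}\lesssim\sqrt{E_t}$ (again Lemma~\ref{l2 lemma}) and \eqref{vt 0}. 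A purely pointwise version of your approach would require truncating the kernel at the scale of the displacement and summing dyadic shells with the Hardy--Littlewood maximal function of the displacement. That costs an affordable logarithm, but some maximal-function argument of this kind cannot be avoided.
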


\begin{lemma}\label{det d}
Suppose that $\max(|x|,|y|)\leq 2\min(|x|,|y|)\leq S_1$, then for all $t\in [0,T]$ it holds that \begin{align}
&|\de_t \Psi_R^1(t,x)|\lesssim |x|^2\label{det d 1}\\
&|\Psi_R^1(t,x)-\Psi_R^1(t,y)|\lesssim |x||x-y|\label{det d 2}\\
&|\de_t \Psi_R^1(t,x)-\de_t \Psi_R^1(t,y)|\lesssim |x||x-y|\log\left(2+\frac{1}{|x-y|}\right)\label{det d 3},
\end{align}
in particular
\begin{align}
    \left|\de_t d^1(t,x)-\de_t d^1(t,y)\right|\lesssim |x|^2|x-y|\log\left(2+\frac{1}{|x-y|}\right)\label{det d 4}
\end{align}
and the exact same estimates hold for $\Psi_R^2(t,\cdot)$ and $d^2(t,\cdot)$.
\end{lemma}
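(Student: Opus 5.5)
The plan is to compute $\de_t\Psi_R$ explicitly from the transport equation and to exploit that, by \eqref{ass later t}, $\varpi_t-\varpi_t(0)$ vanishes to order $\alpha>2$ at $0$. Write $\Psi_R(t,x)=\frac1{2\pi}\int K(x,z)\varpi_t(z)\dd z$ with $K(x,z)=\log|x-z|-\log|z|+\frac{x\cdot z}{|z|^2}$. This $K$ is exactly the second-order Taylor remainder in $x$ at $x=0$ of $\log|x-z|$.

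\textbf{Computing $\de_t\Psi_R$.} Fix a smooth radial cutoff $\chi$ equal to $1$ on $B_1(0)$ and supported in $B_2(0)$, and set $c=\varpi_t(0)=\varpi_0(0)$, which is time-independent by Lemma \ref{decay prop}. Since $\mathrm{div}\,\bar u_t=0$, the first equation of \eqref{xframe} can be rewritten distributionally as
\begin{align*}
\de_t\varpi_t=-\mathrm{div}\bigl(g_t\bigr)-c\,\bar u_t\cdot\nabla\chi,\qquad g_t:=\bar u_t(\varpi_t-c\chi).
\end{align*}
The point-vortex part of $\bar u_t$ is orthogonal to the radial vector $\nabla\chi$. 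Hence the last term equals $-c(v_t-v_t(0))\cdot\nabla\chi$, which is bounded and supported in the annulus $1\le|z|\le2$.

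By \eqref{ass later t} and \eqref{uni bd u}, together with compact support, we have $|g_t(z)|\lesssim\min(|z|^{\alpha-1},1)$ and $g_t$ is supported in $B_{S_2}(0)$. Testing against $K(x,\cdot)$ gives
\begin{align*}
\de_t\Psi_R(t,x)=\frac1{2\pi}\int\nabla_zK(x,z)\cdot g_t(z)\dd z-\frac{c}{2\pi}\int K(x,z)(v_t-v_t(0))\cdot\nabla\chi\dd z.
\end{align*}
The justification of this formula follows from a standard mollification in time, using that $\varpi$ is weakly continuous; the logarithmic singularity of $K$ is integrable.

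\textbf{Proof of \eqref{det d 1}.} We have
\begin{align*}
\nabla_zK(x,z)=\frac{z-x}{|z-x|^2}-\frac{z}{|z|^2}+\frac{x}{|z|^2}-\frac{2(x\cdot z)z}{|z|^4}.
\end{align*}
Since this is a Taylor remainder, $|\nabla_zK|\lesssim|x|^2|z|^{-3}$ for $|z|\ge2|x|$, and $|\nabla_zK|\lesssim|z-x|^{-1}+|z|^{-1}$ for $|z|\le2|x|$ (using $|x|/|z|^2\lesssim |z|^{-1}$ there only after also bounding crudely; the resulting integral is still fine).

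On $|z|\le2|x|$ the integral is therefore $\lesssim|x|^{\alpha-1}\cdot|x|\le|x|^2$. On $2|x|\le|z|\le 1$ it is $\lesssim|x|^2\int_{2|x|}^1r^{\alpha-4}r\,\dd r\lesssim|x|^2$, which is exactly where $\alpha>2$ is used. The region $|z|\ge1$ gives $\lesssim|x|^2$ directly. For the annulus term, $|K(x,z)|\lesssim|x|^2$ when $|z|\ge1$.

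\textbf{Proof of \eqref{det d 2}.} Here $\nabla_x\Psi_R=-(v_t-v_t(0))^\perp$, so $|\nabla_x\Psi_R(t,w)|\lesssim|w|$ by \eqref{est v}. Integrate along the path from $x$ that first follows the circle of radius $|x|$ and then moves radially to $y$. This path stays at distance $\approx|x|$ from the origin and has length $\lesssim|x-y|$ because $|x|\approx|y|$.

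\textbf{Proof of \eqref{det d 3}.} This is the step I expect to be the main obstacle. The reason is that $g_t$ is merely bounded (and not Hölder), so a Calderón–Zygmund-type bound only yields BMO, and the logarithm has to be extracted by hand. Set $\delta=|x-y|$ and split the integration region into three parts.

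On $|z-x|\le2\delta$ (and likewise near $y$), bound each kernel separately: $\int|g_t||z-x|^{-1}\lesssim|x|^{\alpha-1}\delta$, and the linear terms contribute no more.

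On $\{|z-x|>2\delta,\ |z|\le4|x|\}$, use $\bigl|\frac{z-x}{|z-x|^2}-\frac{z-y}{|z-y|^2}\bigr|\lesssim\delta|z-x|^{-2}$ together with $|g_t|\lesssim|x|^{\alpha-1}$. This gives $\lesssim|x|^{\alpha-1}\delta\log(2+|x|/\delta)$. The linear terms are $\lesssim\delta|z|^{-2}|z|^{\alpha-1}$, which is harmless.

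On $|z|\ge4|x|$, use the full Taylor-remainder structure, i.e.\ $|\nabla_zK(x,z)-\nabla_zK(y,z)|\lesssim\delta|x||z|^{-3}$. Integrating against $|z|^{\alpha-1}$ again gives $\lesssim\delta|x|$. The annulus term is $\lesssim|x|\delta$ by the same Taylor argument.

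Since $|x|^{\alpha-1}\le|x|$, adding the three contributions yields \eqref{det d 3}.

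\textbf{Proof of \eqref{det d 4}.} By \eqref{psir id}, $\de_td=2\pi|x|e^{2\pi\Psi_R}\de_t\Psi_R$. Expanding the difference with the product rule produces three terms: $||x|-|y||\cdot|x|^2$ using \eqref{det d 1}; $|x|\cdot|\de_t\Psi_R(x)-\de_t\Psi_R(y)|$ using \eqref{det d 3}; and $|x|\cdot|x|^2\cdot|\Psi_R(x)-\Psi_R(y)|$ using \eqref{det d 2} and \eqref{det d 1}. Here we also use \eqref{linfty psir}. Each term is $\lesssim|x|^2|x-y|\log(2+\frac1{|x-y|})$.

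Nothing in the argument depends on which solution is used, so the same bounds hold for $\Psi_R^2$ and $d^2$.
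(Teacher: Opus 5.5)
Your proposal is correct. For \eqref{det d 1} and \eqref{det d 4} it matches the paper. Both use the same formula for $\de_t\Psi_R$, obtained by integrating the transport equation by parts against $K(x,\cdot)$ after subtracting $\varpi_t(0)$ times a radial cutoff. Both use the Taylor-remainder bound $|\nabla_zK(x,z)|\lesssim|x|^2|z|^{-3}$ for $|z|\ge2|x|$, which is where $\alpha>2$ enters, and the same discrete product rule. Your parenthetical claim $|x|/|z|^2\lesssim|z|^{-1}$ on $|z|\le2|x|$ is false for $|z|\ll|x|$. It is harmless, though, since $\int_{|z|\le2|x|}|x||z|^{\alpha-3}\dd z\lesssim|x|^{\alpha}$ for $\alpha>1$.

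The genuine differences are in \eqref{det d 2} and \eqref{det d 3}.

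For \eqref{det d 2}, the paper re-expands $\Psi_R^1(t,x)-\Psi_R^1(t,y)$ as a kernel integral split over $I_n,\bar A_n,O_n$. You instead integrate $|\nabla\Psi_R(t,w)|=|v_t(w)-v_t(0)|\lesssim|w|$ from \eqref{est v} along an arc-plus-radial path of length $\lesssim|x-y|$. This is shorter and just reuses Lemma \ref{decay prop}, as the paper itself does for \eqref{d est 2}.

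For \eqref{det d 3}, the paper keeps dyadic regions anchored at $x$. On $\bar A_n$ it gets the logarithm from the classical log-Lipschitz bound, Lemma \ref{ll lemma app}, applied to $\bar u_t^1(\varpi_t^1-\varpi_t^1(0))\mathds{1}_{\bar A_n}$. You instead split at the scale $\delta=|x-y|$ into three zones: the ball $B_{2\delta}(x)$; the zone where $\bigl|\frac{z-x}{|z-x|^2}-\frac{z-y}{|z-y|^2}\bigr|\lesssim\delta|z-x|^{-2}$; and $|z|\ge4|x|$, using the remainder bound $\delta|x||z|^{-3}$. In effect you reprove that lemma by hand in localized form.

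The paper's route is more compact. Yours is self-contained and yields the slightly sharper local term $|x|^{\alpha-1}\delta\log(2+|x|/\delta)$. Because its regions are adapted to both points, it also never needs $y$ to stay at distance $\gtrsim|x|$ from the inner ball $I_n$. The hypothesis $|x|/2\le|y|\le2|x|$ does not by itself guarantee that.
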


\begin{lemma}\label{weighted l2}
It holds that \begin{align}\label{weighted l2 est}
\int_{B_1(0)} |x|^{\beta-2}\left|v_t^1(x)-v_t^2(x)\right|^2\dx\lesssim E_t
\end{align}
and \begin{align}
    \left|v_t^1(0)-v_t^2(0)\right|\lesssim \sqrt{E_t}.\label{vt 0}
\end{align}
\end{lemma}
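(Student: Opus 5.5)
The plan is to express $\varpi^1_t-\varpi^2_t$ through the two flow maps and then pair it with Biot–Savart kernels. Since $\varpi^i_t=\varpi_0\circ(\mathcal{X}^i_t)^{-1}$ and both flows preserve volume, for any test function $g$ we have
\begin{align*}
\int (\varpi^1_t-\varpi^2_t)(z)\,g(z)\dd z=\int \varpi_0(x)\bigl(g(\mathcal{X}^1_t(x))-g(\mathcal{X}^2_t(x))\bigr)\dd x .
\end{align*}
Using $\varpi_t(0)=\varpi_0(0)$ from Lemma \ref{decay prop}, we may replace $\varpi_0$ by $\varpi_0-\varpi_0(0)\mathds{1}_{B_{S_2}}$ wherever $g$ is integrated against the constant part. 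The constant part contributes $\varpi_0(0)\int_{B_{S_2}}(g(\mathcal{X}^1_t)-g(\mathcal{X}^2_t))$, and this vanishes if both flows map $B_{S_2}$ onto itself. They do up to a boundary layer, which is harmless because near $|x|=S_2$ everything is regular and is controlled by $E^5_t$. The remaining weight then satisfies $|\varpi_0(x)-\varpi_0(0)|\lesssim\min(1,|x|^\alpha)$.

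For \eqref{vt 0}, take $g(z)=\frac{z^\perp}{2\pi|z|^2}$, cut off smoothly away from infinity. We must then bound
\begin{align*}
\int |\varpi_0(x)-\varpi_0(0)|\,\bigl|g(\mathcal{X}^1_t(x))-g(\mathcal{X}^2_t(x))\bigr|\dx .
\end{align*}
In the region where $\eta=1$, \eqref{mod1} gives $|g(\mathcal{X}^1_t)-g(\mathcal{X}^2_t)|\lesssim |x|^{-2}|\mathcal{X}^1_t-\mathcal{X}^2_t|$. By \eqref{D m} this is at most $|x|^{-1-\beta/2}D(\mathcal{X}^1_t(x),\mathcal{X}^2_t(x))$. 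Applying Cauchy–Schwarz against the weight $|x|^{\alpha}$, we get the bound $\bigl(\int|x|^{2\alpha-2-\beta}\bigr)^{1/2}E_t^{1/2}\lesssim\sqrt{E_t}$, which is finite since $\alpha>2$. Away from the origin, $g$ is smooth, so the integrand is bounded by $|\mathcal{X}^1_t-\mathcal{X}^2_t|\lesssim D$ and is again controlled by $\sqrt{E_t}$. Near $0$ the region $\eta\neq1$ is excluded because of \eqref{mod1}.

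For \eqref{weighted l2 est}, fix $y\in B_1(0)$. Using the kernel $K(z)=\frac{z^\perp}{2\pi|z|^2}$ and the mean value theorem along a path from $\mathcal{X}^1_t(x)$ to $\mathcal{X}^2_t(x)$, we write
\begin{align*}
|v^1_t(y)-v^2_t(y)|\lesssim\int |\varpi_0(x)-\varpi_0(0)|\,\frac{|\mathcal{X}^1_t(x)-\mathcal{X}^2_t(x)|}{\min_i|y-\mathcal{X}^i_t(x)|^{2}}\wedge\Bigl(\frac{1}{|y-\mathcal{X}^1_t(x)|}+\frac{1}{|y-\mathcal{X}^2_t(x)|}\Bigr)\dx,
\end{align*}
plus the constant-part contribution, which is handled as before. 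This is the standard Loeper/Yudovich-type estimate. Writing $\delta(x)=|\mathcal{X}^1_t(x)-\mathcal{X}^2_t(x)|$ and using the weight $|x|^\alpha$, we bound the pairing by a generalized Riesz potential of $h(x):=|x|^{\alpha}\delta(x)$. In effect we need the weighted inequality
\begin{align*}
\int_{B_1}|y|^{\beta-2}\Bigl|\int\frac{h(x)}{|y-x|}\dx\Bigr|^2\dd y\lesssim\int |x|^{2\alpha-2-\beta}\delta(x)^2|x|^{-2\alpha}\dx\cdot(\dots),
\end{align*}
that is, boundedness of the operator $|\cdot|^{-1}*$ from $L^2(|x|^{\beta-2-2\alpha+2\alpha}\dots)$ into $L^2(|y|^{\beta-2})$. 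Concretely, we need $\int|y|^{\beta-2}|I_1 h|^2\lesssim\int|x|^{\beta-2\alpha}\,\cdot$ with $h\lesssim|x|^{\alpha}\delta$ and $\int|x|^{-2-\beta}\delta^2\lesssim E_t$. This is a Stein–Weiss inequality with power weights, and a lot of room is available because $\alpha>2$.

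The main obstacle is the near-diagonal part, where $|y-\mathcal{X}^i_t(x)|\lesssim\delta(x)$ and the mean value bound fails. There we use the $\wedge$-branch: the set of such $x$ has measure at most $\delta^2$, which yields a $\delta$-type contribution. Checking that the Stein–Weiss exponent conditions hold, namely $\beta-2>-2$ on the output side and an input weight small enough, is where $\alpha>2$ and $\beta=\frac{\alpha-2}{100}$ enter, and this is the step I expect to require the most care.
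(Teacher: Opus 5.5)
Your argument for \eqref{vt 0} is fine and is essentially the paper's. You subtract $\varpi_0(0)$ on a large ball and use $\bigl|\frac{a}{|a|^2}-\frac{b}{|b|^2}\bigr|=\frac{|a-b|}{|a||b|}\lesssim|x|^{-2}|a-b|$ from \eqref{mod1}. You then apply Cauchy--Schwarz with \eqref{D m}, and for the constant part you use that it only sees a boundary layer near $|z|\approx S_2$, where the kernel can be replaced by a $C^1$ function.

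\textbf{The gap is in \eqref{weighted l2 est}.} Once you take absolute values inside the Biot--Savart integral, the mean value bound gives $\delta(x)/(|y-\mathcal{X}^1_t(x)|\,|y-\mathcal{X}^2_t(x)|)$. After substituting $z=\mathcal{X}^i_t(x)$ this is a kernel $\delta/|y-z|^2$ of order zero. Your reduction to $\int h(x)|y-x|^{-1}\dx$ drops a factor $|y-z|^{-1}$. It also places the singularity at the label $x$ rather than at the position $\mathcal{X}^i_t(x)$, which near the vortex is rotated by a large angle.

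Stein--Weiss needs a Riesz potential of order $\gamma\in(0,2)$ and fails for the positive kernel $|y-z|^{-2}$. Your $\wedge$ only removes the region $|y-z|\lesssim\delta$. The range $\delta\lesssim|y-z|\lesssim|y|$ still contributes $\int\delta|y-z|^{-2}\dd z\approx\delta\log(|y|/\delta)$. For instance, suppose $\delta\approx\epsilon$ on the dyadic annulus containing $y$ and $|\varpi_0-\varpi_0(0)|\approx|x|^\alpha$ there. Then your majorant is $\gtrsim|y|^\alpha\epsilon\log(|y|/\epsilon)$.

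You use $\delta$ only through $\int|x|^{\beta-2}\delta^2\lesssim E_t$. (That is what \eqref{D m} gives, not $\int|x|^{-2-\beta}\delta^2\lesssim E_t$ as you wrote.) So this route yields at best $E_t\log^2(2+1/E_t)$, never $E_t$.

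The lost logarithm matters in the paper. Via Hardy's inequality, \eqref{weighted l2 est} becomes \eqref{deri dpsi2}. That estimate is paired by Cauchy--Schwarz with the already logarithmic \eqref{deri dpsi1} in the bound for $\frac{\mathrm{d}}{\mathrm{d}t}E_t^1$. An extra logarithm would give $E_t\log^2(2+1/E_t)$ there, which is not an Osgood modulus.

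\textbf{What is missing is an $L^2$ estimate that keeps the cancellation.} The paper uses Lemma \ref{l2 lemma}, proved by duality and the maximal-function bound $|h(p)-h(q)|\lesssim|p-q|\bigl(M|\nabla h|(p)+M|\nabla h|(q)\bigr)$. It gives $\|\BS[f\circ\Phi_1]-\BS[f\circ\Phi_2]\|_{L^2}\lesssim\|f\|_{L^\infty}\|\Phi_1^{-1}-\Phi_2^{-1}\|_{L^2(\supp f)}$ with no logarithm.

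For $x\in A_n$ the paper applies this only to the near field $\mathds{1}_{I_{n-4}}(\varpi_0-\varpi_0(0))$. Its sup norm is $\lesssim2^{-n\alpha}$ by \eqref{ass later t}, which beats the weight $|x|^{\beta-2}\approx2^{(2-\beta)n}$, so the dyadic sum converges. The pointwise kernel bound is used only in the far field $|z|\geq4|x|$. There $|x-\mathcal{X}^i_t(z)|\approx|z|$ and there is no diagonal, so Cauchy--Schwarz with \eqref{D m} gives $\lesssim\sqrt{E_t}$ uniformly in $x$.
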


\subsection{Bound on $E_t^1$ and $E_t^2$}
By the incompressibility of the flow, we can rewrite \begin{align*}
E_t^1=\int_{B_\frac{1}{2}(0)}\eta(x,x)|\Psi_R^1(t,x)-\Psi_R^2(t,x)|^2d^1(t,x)^{\beta-4}\dd x,
\end{align*}
which also used that $(\mathcal{X}_t^1)^{-1}$ maps $\supp \eta$ into $B_\frac{1}{2}(0)^2$ by \eqref{mod1}.

We then estimate \begin{align*}
\mel\left|\dfrac\int_{B_\frac{1}{2}(0)}\eta(x,x)|\Psi_R^1(t,x)-\Psi_R^2(t,x)|^2d^1(t,x)^{\beta-4}\dd x\right|\\
&\lesssim 
\int_{B_\frac{1}{2}(0)}\eta(x,x)\biggl(\frac{|\de_t d^1(t,x)|}{d^1(t,x)}|\Psi_R^1(t,x)-\Psi_R^2(t,x)|^2\\
&\qquad+\left|\de_t\Psi_R^1(t,x)-\de_t\Psi_R^2(t,x)\right|\left|\Psi_R^1(t,x)-\Psi_R^2(t,x)\right|\biggr)d^1(t,x)^{\beta-4}\dd x.
\end{align*}
The first summand is $\lesssim E_t$ by \eqref{d est 3} and \eqref{d mod S1}.

The second summand is $\lesssim E_t\log(2+\frac{1}{E_t})$ by Lemma \ref{deri dpsi} above and Cauchy-Schwarz, showing the desired bound on this part of the time derivative.

Clearly $E_t^2$ can be estimated in the same way.

\subsection{Bound on $E_t^3$}

We compute \begin{align}
\begin{aligned}\label{deri 1}
&\dfrac E_t^3=\dfrac \int_{B_{S_2}(0)} \eta(\mathcal{X}_t^1(x),\mathcal{X}_t^2(x))\left|d^1(t,\mathcal{X}_t^1(x))-d^2(t,\mathcal{X}_t^2(x))\right|^2\mathfrak{d}_t(x)^{\beta-6}\dd x\\
&\lesssim \int_{B_{S_2}(0)} \left|\nabla \eta(\mathcal{X}_t^1(x),\mathcal{X}_t^2(x))\cdot\binom{\bar u_t^1(\mathcal{X}_t^1(x))}{\bar u_t^2(\mathcal{X}_t^2(x))}\right|\left|d^1(t,\mathcal{X}_t^1(x))-d^2(t,\mathcal{X}_t^2(x))\right|^2\mathfrak{d}_t(x)^{\beta-6}\\
&\:  +\eta(\mathcal{X}_t^1(x),\mathcal{X}_t^2(x))\left|d^1(t,\mathcal{X}_t^1(x))-d^2(t,\mathcal{X}_t^2(x))\right|^2\mathfrak{d}_t(x)^{\beta-6} \frac{\max(|(\de_t d^1)(t,\mathcal{X}_t^1(x))|,|(\de_t d^2)(t,\mathcal{X}_t^2(x))|)}{\mathfrak{d}_t(x)}\\
&\:+\eta(\mathcal{X}_t^1(x),\mathcal{X}_t^2(x))\left|(\de_t d^1)(t,\mathcal{X}_t^1(x))-(\de_t d^2)(t,\mathcal{X}_t^2(x))\right|\left|d^1(t,\mathcal{X}_t^1(x))-d^2(t,\mathcal{X}_t^2(x))\right|\mathfrak{d}_t(x)^{\beta-6}\dd x,
\end{aligned}\end{align}
which crucially uses the formula \eqref{id flow}.


For the first term, we use that $|\bar u_t^1|+|\bar u_t^2|+|d^1(t,\cdot)|^{-1}+|d^2(t,\cdot)|^{-1}\lesssim 1$ on $\supp\nabla\eta$ by \eqref{uni bd u} and \eqref{d mod S1} and that the gradient of the cutoff function is only non-zero for $x\in B_{\frac{1}{2}S_1}(0)\backslash B_{\frac{1}{20}S_1}(0)$  by \eqref{mod} to estimate \begin{align*}
  \mel  \left|\nabla \eta(\mathcal{X}_t^1(x),\mathcal{X}_t^2(x))\cdot\binom{\bar u_t^1(\mathcal{X}_t^1(x))}{\bar u_t^2(\mathcal{X}_t^2(x))}\right|\left|d^1(t,\mathcal{X}_t^1(x))-d^2(t,\mathcal{X}_t^2(x))\right|^2\mathfrak{d}_t(x)^{\beta-6}\\
  &\lesssim \mathds{1}_{B_{\frac{1}{2}S_1}(0)\backslash B_{\frac{1}{20}S_1}(0)}(x) \left|d^1(t,\mathcal{X}_t^1(x))-d^2(t,\mathcal{X}_t^2(x))\right|^2\mathfrak{d}_t(x)^{\beta-6}\\
  &\lesssim \mathds{1}_{B_{\frac{1}{2}S_1}(0)\backslash B_{\frac{1}{20}S_1}(0)}(x)|x|^{\beta-6}\Bigl(\left|d^2(t,\mathcal{X}_t^1(x))-d^2(t,\mathcal{X}_t^2(x))\right|^2\\
  &\quad+|x|^2\left|e^{2\pi\Psi_R^1(t,\mathcal{X}_t^1(x))}-e^{2\pi\Psi_R^2(t,\mathcal{X}_t^1(x))}\right|^2\Bigr)\\
 & \lesssim \mathds{1}_{B_{\frac{1}{2}S_1}(0)\backslash B_{\frac{1}{20}S_1}(0)}(x)\left(|\mathcal{X}_t^1(x)-\mathcal{X}_t^2(x)|^2+|\Psi_R^1(t,\mathcal{X}_t^1(x))-\Psi_R^2(t,\mathcal{X}_t^1(x))|^2\right)
\end{align*}
where we have also used \eqref{mfrakd}, \eqref{linfty psir}, the identity \eqref{psir id} and that $d^2(t,\cdot)$ is uniformly $W^{1,\infty}$ by \eqref{d est 2}.

Applying \eqref{D m}, \eqref{deri dpsi2} and using that all $x$-dependent weights are $\approx 1$ on $B_{\frac{1}{2}S_1}(0)\backslash B_{\frac{1}{20}S_1}(0)$, we therefore see that \begin{align*}
\int_{B_{S_2}(0)} \left|\nabla \eta(\mathcal{X}_t^1(x),\mathcal{X}_t^2(x))\cdot\binom{\bar u_t^1(\mathcal{X}_t^1(x))}{\bar u_t^2(\mathcal{X}_t^2(x))}\right|\left|d^1(t,\mathcal{X}_t^1(x))-d^2(t,\mathcal{X}_t^2(x))\right|^2\mathfrak{d}_t(x)^{\beta-6}\dx\lesssim E_t.
\end{align*}

\noindent The second term in \eqref{deri 1} is trivially bounded by $E_t$ thanks to the estimates on $d^1$ and $d^2$ in \eqref{d est 3} and \eqref{d mod S1}.

The third term on the other hand is estimated as \begin{align}
\begin{aligned}\label{3e3}
\mel \int_{B_{S_2}(0)} \eta(\mathcal{X}_t^1(x),\mathcal{X}_t^2(x))\left| d^1(t,\mathcal{X}_t^1(x))-d^2(t,\mathcal{X}_t^2(x))\right|\left|(\de_t d^1)(t,\mathcal{X}_t^1(x))-(\de_t d^2)(t,\mathcal{X}_t^2(x))\right|\mathfrak{d}_t(x)^{\beta-6}\dd x\\
&\lesssim \int_{B_{S_2}(0)} \eta(\mathcal{X}_t^1(x),\mathcal{X}_t^2(x))\left| d^1(t,\mathcal{X}_t^1(x))-d^2(t,\mathcal{X}_t^2(x))\right|\Bigl(\left|(\de_t d^1)(t,\mathcal{X}_t^1(x))-(\de_t d^1)(t,\mathcal{X}_t^2(x))\right|\\
&\qquad +\left|(\de_t d^1)(t,\mathcal{X}_t^2(x))-(\de_t d^2)(t,\mathcal{X}_t^2(x))\right|\Bigr)|x|^{\beta-6}\dd x,
\end{aligned}\end{align}
where we used \eqref{mfrakd}.

The first summand in this is directly estimated by Lemma \ref{det d}  as \begin{align*}
\mel\int_{B_{S_2}(0)} \eta(\mathcal{X}_t^1(x),\mathcal{X}_t^2(x))\left| d^1(t,\mathcal{X}_t^1(x))-d^2(t,\mathcal{X}_t^2(x))\right|\left|(\de_t d^1)(t,\mathcal{X}_t^1(x))-(\de_t d^1)(t,\mathcal{X}_t^2(x))\right||x|^{\beta-6}\dd x\\
&\lesssim \int \eta(\mathcal{X}_t^1(x),\mathcal{X}_t^2(x))\left| d^1(t,\mathcal{X}_t^1(x))-d^2(t,\mathcal{X}_t^2(x))\right|\left|\mathcal{X}_t^1(x)-\mathcal{X}_t^2(x)\right|\\
&\qquad \times \log\left(2+\frac{1}{\left|\mathcal{X}_t^1(x)-\mathcal{X}_t^2(x)\right|}\right)|x|^{\beta-4}\dd x\\
&\lesssim \sqrt{E_t}\left(\int_{B_{S_2}(0)}\eta(\mathcal{X}_t^1(x),\mathcal{X}_t^2(x)) \left|\mathcal{X}_t^1(x)-\mathcal{X}_t^2(x)\right|^2\log^2\left(2+\frac{1}{\left|\mathcal{X}_t^1(x)-\mathcal{X}_t^2(x)\right|}\right)|x|^{\beta-2}\dd x\right)^\frac{1}{2}
\end{align*}
where the last step uses Cauchy-Schwarz. 

This can be further estimated with Jensen's inequality, applied to the concave function $y\log(2+y^{-\frac{1}{2}})^2$ as \begin{align*}
  \mel\int_{B_{S_2}(0)}\eta(\mathcal{X}_t^1(x),\mathcal{X}_t^2(x)) \left|\mathcal{X}_t^1(x)-\mathcal{X}_t^2(x)\right|^2\log^2\left(2+\frac{1}{\left|\mathcal{X}_t^1(x)-\mathcal{X}_t^2(x)\right|}\right)|x|^{\beta-2}\dd x\\
  &\lesssim \int_{B_{S_2}(0)} \eta(\mathcal{X}_t^1(x),\mathcal{X}_t^2(x)) \left|\mathcal{X}_t^1(x)-\mathcal{X}_t^2(x)\right|^2|x|^{\beta-2}\dd x\\
&\quad\times\log\left(2+\left(A^{-1}\int_{B_{S_2}(0)} \eta(\mathcal{X}_t^1(x),\mathcal{X}_t^2(x)) \left|\mathcal{X}_t^1(x)-\mathcal{X}_t^2(x)\right|^2|x|^{\beta-2}\dd x\right)^{-\frac{1}{2}}\right)^2\\
&\lesssim E_t\log\left(2+\frac{1}{E_t}\right)^2,
\end{align*}
where $A=\int_{B_{S_2}(0)}\eta(\mathcal{X}_t^1(x),\mathcal{X}_t^2(x))|x|^{\beta-2}\dd x\approx 1$ is renormalisation factor to ensure that \newline $\mathds{1}_{B_{S_2}(0)}\eta(\mathcal{X}_t^1(x),\mathcal{X}_t^2(x))|x|^{\beta-2}$ is a probability measure and the last step uses \eqref{D m}.

%
For the other summand in \eqref{3e3}, we recall the formula \eqref{psir id}, and see that \begin{align*}
&|(\de_t d^1)(t,\mathcal{X}_t^2(x))-(\de_t d^2)(t,\mathcal{X}_t^2(x))|\\
&\quad=2\pi\left|\mathcal{X}_t^2(x)\right|\Bigl|e^{2\pi\Psi_R^1(t,\mathcal{X}_t^2(x))}(\de_t \Psi_R^1)(t,\mathcal{X}_t^2(x))-e^{2\pi\Psi_R^2(t,\mathcal{X}_t^2(x))}(\de_t\Psi_R^2)(t,\mathcal{X}_t^2(x))\Bigr|\\
&\quad\lesssim |x|\Bigl(\left|\Psi_R^1(t,\mathcal{X}_t^2(x))-\Psi_R^2(t,\mathcal{X}_t^2(x))\right|\left|(\de_t\Psi_R^1)(t,\mathcal{X}_t^2(x))\right|+\left|(\de_t\Psi_R^1)(t,\mathcal{X}_t^2(x))-(\de_t\Psi_R^2)(t,\mathcal{X}_t^2(x))\right|\Bigr)\\
&\quad\lesssim|x|\Bigl(\left|d^1(t,\mathcal{X}_t^2(x))-d^2(t,\mathcal{X}_t^2(x))\right||x|+\left|(\de_t\Psi_R^1)(t,\mathcal{X}_t^2(x))-(\de_t\Psi_R^2)(t,\mathcal{X}_t^2(x))\right|\Bigr)
\end{align*}
in $B_{S_1}(0)$, which also uses \eqref{linfty psir}, \eqref{det d 1} and \eqref{d mod S1}, and the bound \eqref{mod}. This yields that \begin{align*}
& \int_{B_{S_2}(0)} \eta(\mathcal{X}_t^1(x),\mathcal{X}_t^2(x))\left| d^1(t,\mathcal{X}_t^1(x))-d^2(t,\mathcal{X}_t^2(x))\right|\left|(\de_t d^1)(t,\mathcal{X}_t^2(x))-(\de_t d^2)(t,\mathcal{X}_t^2(x))\right||x|^{\beta-6}\dd x\\
&\lesssim \int_{B_{S_2}(0)} \eta(\mathcal{X}_t^1(x),\mathcal{X}_t^2(x))\left| d^1(t,\mathcal{X}_t^1(x))-d^2(t,\mathcal{X}_t^2(x))\right||(\de_t\Psi_R^1)(t,\mathcal{X}_t^2(x))-(\de_t\Psi_R^2)(t,\mathcal{X}_t^2(x))||x|^{\beta-5}\\
&\quad + \eta(\mathcal{X}_t^1(x),\mathcal{X}_t^2(x))\left| d^1(t,\mathcal{X}_t^1(x))-d^2(t,\mathcal{X}_t^2(x))\right||x|^{\beta-4}\dd x.
\end{align*}
The first term can be estimated by $E_t\log(2+\frac{1}{E_t})$ in the same manner as the term $E_t^1$ after using Cauchy-Schwarz, the second term is trivially $\lesssim E_t$. 

Putting all these estimates back together again, we obtain the desired estimate \begin{align*}
\left|\dfrac E_t^3\right|\lesssim E_t\log\left(2+\frac{1}{E_t}\right).
\end{align*}

\subsection{The term $E_t^4$}

We estimate \begin{align*}
\mel\left|\dfrac E_t^4\right|= \left|\dfrac \int_{B_{S_2}(0)} \eta(\mathcal{X}_t^1(x),\mathcal{X}_t^2(x))\mathfrak{d}_t(x)^\beta |\mathcal{X}_t^1(x)_\theta-\mathcal{X}_t^2(x)_\theta|^2\dd x\right|\\
& \lesssim  \int_{B_{S_2}(0)} \left|\nabla \eta(\mathcal{X}_t^1(x),\mathcal{X}_t^2(x))\cdot \binom{\bar u_t^1(\mathcal{X}_t^1(x))}{\bar u_t^2(\mathcal{X}_t^2(x))}\right|
\mathfrak{d}_t(x)^\beta |\mathcal{X}_t^1(x)_\theta-\mathcal{X}_t^2(x)_\theta|^2\\
&\: + \eta(\mathcal{X}_t^1(x),\mathcal{X}_t^2(x))\frac{\max(|(\de_t d^1)(t,\mathcal{X}_t^1(x))|,|(\de_t d^2)(t,\mathcal{X}_t^2(x))|)}{\mathfrak{d}_t(x)}\mathfrak{d}_t(x)^\beta |\mathcal{X}_t^1(x)_\theta-\mathcal{X}_t^2(x)_\theta|^2\\
&\: +\eta(\mathcal{X}_t^1(x),\mathcal{X}_t^2(x))\mathfrak{d}_t(x)^{\beta} |\mathcal{X}_t^1(x)_\theta-\mathcal{X}_t^2(x)_\theta|\left|\bar u_t^1(\mathcal{X}_t^1(x))\cdot \frac{\mathcal{X}_t^1(x)^\perp}{|\mathcal{X}_t^1(x)|^2}-\bar u_t^2(\mathcal{X}_t^2(x))\cdot \frac{\mathcal{X}_t^2(x)^\perp}{|\mathcal{X}_t^2(x)|^2}\right|\dd x,
\end{align*}
where in the last term, we have used the identity $\de_t f_\theta=\de_t f\cdot \frac{f^\perp}{|f|^2}$.

The first two terms are again directly estimated against $E_t$ in the same manner as above.

For the third term, we use 
the estimates \eqref{ll2}, \eqref{mod1} and \eqref{mfrakd} to see that \begin{align}
\mel\int_{B_{S_2}(0)} \eta(\mathcal{X}_t^1(x),\mathcal{X}_t^2(x))\mathfrak{d}_t(x)^{\beta} |\mathcal{X}_t^1(x)_\theta-\mathcal{X}_t^2(x)_\theta|\left|\bar u_t^1(\mathcal{X}_t^1(x))\cdot \frac{\mathcal{X}_t^1(x)^\perp}{|\mathcal{X}_t^1(x)|^2}-\bar u_t^2(\mathcal{X}_t^2(x))\cdot \frac{\mathcal{X}_t^2(x)^\perp}{|\mathcal{X}_t^2(x)|^2}\right|\dd x\nonumber\\
&\lesssim \int_{B_\frac{1}{2}(0)} |x|^{\beta/2-1} \left(k_t(x)\log\left(2+\frac{1}{k_t(x)}\right)+|\bar u_t^1(\mathcal{X}_t^1(x))-\bar u_t^2(\mathcal{X}_t^1(x))|\right)D(\mathcal{X}_t^1(x),\mathcal{X}_t^2(x))\dd x.\label{et4 0}
\end{align}
For the first summand, we use Cauchy-Schwarz to see that \begin{align*}
\mel\int_{B_\frac{1}{2}(0)} |x|^{\beta/2-1} k_t(x)\log\left(2+\frac{1}{k_t(x)}\right)D(\mathcal{X}_t^1(x),\mathcal{X}_t^2(x))\dd x\\
&\lesssim \sqrt{E_t} \left(\int_{B_\frac{1}{2}(0)} |x|^{\beta-2}k_t(x)^2\log\left(2+\frac{1}{k_t(x)}\right)^2\dx\right)^\frac{1}{2}.
\end{align*}
We can then estimate, using Jensen's inequality \begin{align*}
\mel\int_{B_\frac{1}{2}(0)} |x|^{\beta-2}k_t(x)^2\log\left(2+\frac{1}{k_t(x)}\right)^2\dx\\
&\lesssim \int_{B_\frac{1}{2}(0)} |x|^{\beta-2}k_t(x)^2\dx\log\left(2+\frac{1}{(A'\int_{B_\frac{1}{2}(0)}|x|^{\beta-2}k_t(x)^2\dx)^\frac{1}{2}} \right)^2\\
&\lesssim \int_{B_\frac{1}{2}(0)} D(\mathcal{X}_t^1(x),\mathcal{X}_t^2(x))^2\dd x\left(\log\left(2+\frac{1}{\int_{B_\frac{1}{2}(0)} D(\mathcal{X}_t^1(x),\mathcal{X}_t^2(x))^2\dd x}\right) \right)^2\\
&\lesssim E_t\log\left(2+\frac{1}{E_t}\right)^2
\end{align*}
where $A'\approx 1$ is a renormalisation factor  that ensures that $\mathds{1}_{B_\frac{1}{2}(0)}|x|^{\beta-2}$ is a probability measure and the penultimate step used  \eqref{D m}. \allowdisplaybreaks
Hence we have
\begin{align}
    \int_{B_\frac{1}{2}(0)} |x|^{\beta/2-1} k_t(x)\log\left(2+\frac{1}{k_t(x)}\right)D(\mathcal{X}_t^1(x),\mathcal{X}_t^2(x))\dd x\lesssim E_t\log\left(2+\frac{1}{E_t}\right).\label{et4 1}
\end{align}

\noindent For the second summand in \eqref{et4 0}, we split $\bar u_t^1-\bar u_t^2=v_t^1(\cdot)-v_t^1(0)-v_t^2(\cdot)+v_t^2(0)$ and estimate the corresponding summands separately. For the first one, we have by the incompressibility and \eqref{mod1} 
 \begin{align}
 \begin{aligned}
\mel\int_{B_\frac{1}{2}(0)} |x|^{\beta/2-1} \left|v_t^1(\mathcal{X}_t^1(x))-v_t^2(\mathcal{X}_t^1(x))\right|D(\mathcal{X}_t^1(x),\mathcal{X}_t^2(x))\dd x\\
&\lesssim \left(\int_{B_1(0)} |x|^{\beta-2}\left|v_t^1(x)-v_t^2(x)\right|^2\dx\right)^\frac{1}{2}\sqrt{E_t}\lesssim E_t,\label{et4 2}
\end{aligned}\end{align}
where the last step uses Lemma \ref{weighted l2}.

The other summand on the other hand can be estimated by Cauchy-Schwarz as  \begin{align}
\mel \int_{B_\frac{1}{2}(0)} |x|^{\beta/2-1} |v_t^1(0)-v_t^2(0)|D(\mathcal{X}_t^1(x),\mathcal{X}_t^2(x))\dd x\lesssim |v_t^1(0)-v_t^2(0)|\sqrt{E_t} \lesssim E_t\label{et4 3}
\end{align}
since $ |x|^{\beta-2}$ is integrable and where the last estimate is again  Lemma \ref{weighted l2}.

Combining \eqref{et4 0}, \eqref{et4 1}, \eqref{et4 2} and \eqref{et4 3} again, we see that \begin{align*}
    \left|\dfrac E_t^4\right|\lesssim E_t\log\left(2+\frac{1}{E_t}\right).
\end{align*}

\subsection{$\bold{E_t^5}$}
We compute \begin{align}\label{et5 0}\begin{aligned}
 \frac{1}{2}\dfrac E_t^5\leq \int_{B_{S_2}(0)}&\left|\nabla\eta(\mathcal{\Chi}_t^1(x),\mathcal{\Chi}_t^2(x))\cdot \binom{\bar u_t^1(\mathcal{\Chi}^1(x))}{\bar u_t^2(\mathcal{\Chi}^2(x))}\right||\mathcal{\Chi}_t^1(x)-\mathcal{\Chi}_t^2(x)|^2\\
&+(1-\eta)(\mathcal{\Chi}_t^1(x),\mathcal{\Chi}_t^2(x))\left|\scalar{\bar u_t^1(\mathcal{\Chi}_t^1(x))-\bar u_t^2(\mathcal{\Chi}_t^2(x))}{\mathcal{\Chi}_t^1(x)-\mathcal{\Chi}_t^2(x)}\right|\dd x.
\end{aligned}\end{align}
The first term is directly estimated through $\int_{B_{S_2}(0)} |\mathcal{\Chi}_t^1(x)-\mathcal{\Chi}_t^2(x)|^2\dd x\lesssim E_t$ as $\bar u_t^1$ and $\bar u_t^2$ are uniformly bounded away from $0$ by \eqref{uni bd u}.

The second summand in \eqref{et5 0} on the other hand proceeds analogously to the classical uniqueness proof. We can estimate \begin{align*}
\mel\int_{B_{S_2}(0)}(1-\eta)(\mathcal{\Chi}_t^1(x),\mathcal{\Chi}^2(x))\left|\scalar{\bar u_t^1(\mathcal{\Chi}_t^1(x))-\bar u_t^2(\mathcal{\Chi}_t^2(x))}{\mathcal{\Chi}_t^1(x)-\mathcal{\Chi}_t^2(x)}\right|\dd x\\
&\leq\int_{B_{S_2}(0)}(1-\eta)(\mathcal{\Chi}_t^1(x),\mathcal{\Chi}_t^2(x))\left|\bar u_t^1(\mathcal{\Chi}_t^1(x))-\bar u_t^1(\mathcal{\Chi}_t^2(x))\right|\left|\mathcal{\Chi}_t^1(x)-\mathcal{\Chi}_t^2(x)\right|\dd x\\
&\quad+\int_{B_{S_2}(0)}(1-\eta)(\mathcal{\Chi}_t^1(x),\mathcal{\Chi}_t^2(x))\left|\bar u_t^1(\mathcal{\Chi}_t^2(x))-\bar u_t^2(\mathcal{\Chi}_t^2(x))\right|\left|\mathcal{\Chi}_t^1(x)-\mathcal{\Chi}_t^2(x)\right|\dd x\\
&=:I+II.
\end{align*}
%

\noindent The term $I$ is estimated through $\bar u_t^1$ being log-Lipschitz on $\supp 1-\eta$ by Lemma \ref{ll lemma}, yielding, together with Cauchy-Schwarz and Jensen's inequality that \begin{align*}
\mel I\lesssim \left(\int_{B_{S_2}(0)\backslash \Omega} k_t(x)^2\log\left(2+\frac{1}{k_t(x)}\right)^2\dd x\right)^\frac{1}{2}\sqrt{E_t}\\
&\lesssim \left(\int_{B_{S_2}(0)\backslash \Omega} k_t(x)^2 \dd x\right)^\frac{1}{2}\log\left(2+\frac{1}{\left(\fint_{B_{S_2}(0)\backslash \Omega} k_t(x)^2 \dd x\right)^\frac{1}{2}}\right)\sqrt{E_t}\\
&\lesssim \left(\int_{B_{S_2}(0)\backslash \Omega} k_t(x)^2 \dd x\right)^\frac{1}{2}\log\left(2+\frac{1}{\int_{B_{S_2}(0)\backslash \Omega} k_t(x)^2 \dd x}\right)\sqrt{E_t}\\
&\lesssim E_t\log\left(2+\frac{1}{E_t}\right),
\end{align*}
where the last step uses \eqref{D m}.

The term $II$ on the other hand is estimated by Cauchy-Schwarz as \begin{align*}
\mel II\lesssim \sqrt{E_t}\left(\int_{B_{S_2}(0)}|\bar u_t^1(\mathcal{X}_t^2(x))-\bar u_t^2(\mathcal{X}_t^2(x))|^2\dd x\right)^\frac{1}{2}\lesssim \sqrt{E_t}\left(\int_{B_{2S_2}(0)}|\bar u_t^1(x)-\bar u_t^2(x)|^2\dd x\right)^\frac{1}{2},
\end{align*}
where the last step uses the incompressibility and \eqref{mod1}.

We further have \begin{align}
\int_{B_{2S_2}(0)}|\bar u_t^1(x)-\bar u_t^2(x)|^2\dd x\leq 2\int_{B_{2S_2}(0)}|v_t^1(x)-v_t^2(x)|^2+|v_t^1(0)-v_t^2(0)|^2\dd x.\label{velo diff}
\end{align}
We now utilize Lemma \ref{l2 lemma} and the fact that $\varpi_0$ is supported in $B_{S_2}(0)$ to estimate  \begin{align*}
    \int_{B_{S_2}(0)}|v_t^1(x)-v_t^2(x)|^2\dd x\lesssim \norm{\mathcal{X}_t^1-\mathcal{X}_t^2}_{L^2(B_{S_2}(0))}^2\lesssim E_t,
\end{align*}
where the last step uses \eqref{D m} yet again.
For the second term in \eqref{velo diff}, we can directly use \eqref{vt 0} to estimate it with $E_t$, which yields that \begin{align*}
II\lesssim E_t.
\end{align*}
This shows the desired differential estimate $|\dfrac E_t^5|\lesssim E_t\log(2+\frac{1}{E_t})$
and finishes the proof of \eqref{dest}. \vphantom{a}\hfill\qedsymbol

\section{Proof of Lemmas \ref{deri dpsi}-\ref{weighted l2}}\label{S4}
\subsection{Proof of Lemma \ref{deri dpsi}}
We first show \eqref{deri dpsi1}.

We use the definition \eqref{def psir} and integrate by parts to see that \begin{align}
\mel 2\pi\left|\de_t\Psi_R^1(t,x)-\de_t\Psi_R^2(t,x)\right|=\left|\dfrac \int_{\R^2}\left(\log|x-z|-\log|z|+\frac{x\cdot z}{|z|^2}\right) (\varpi_t^1(z)-\varpi_t^2(z))\dd z\right|\nonumber\\
&=\biggl|\int_{\R^2}\left(\log|x-z|-\log|z|+\frac{x\cdot z}{|z|^2}\right) \Bigl(\bar u_t^1(z)\cdot\nabla_z(\varpi_t^1(z)-\varpi_t^2(z))\nonumber\\
&\qquad+(\bar u_t^1(z)-\bar u_t^2(z))\cdot\nabla \varpi_t^2(z)\dd z\biggr|\nonumber\\
&\leq \left|\int_{\R^2}\nabla_z(\log|x-z|-\log|z|+\frac{x\cdot z}{|z|^2})\cdot \bar u_t^1(z)(\varpi_t^1(z)-\varpi_t^2(z))\dd z\right|\nonumber\\
&\quad+\left|\int_{\R^2}\nabla_z(\log|x-z|-\log|z|+\frac{x\cdot z}{|z|^2})\cdot(\bar u_t^1(z)-\bar u_t^2(z))(\varpi_t^2(z)-\tilde{\eta}(z)\varpi_t^2(0))\dd z\right|\nonumber\\
&\quad+\left|\int_{\R^2}\nabla_z(\log|x-z|-\log|z|+\frac{x\cdot z}{|z|^2})\cdot(\bar u_t^1(z)-\bar u_t^2(z))\tilde{\eta}(z)\varpi_t^2(0)\dd z\right|\nonumber\\
&=:P(x)+Q(x)+R(x) \label{def pqr}
\end{align}
where $P, Q$ and $R$ depend on time through the $\bar u$ and $\varpi$ and $\tilde{\eta}$ is a smooth, radially symmetric and nonnegative function supported in $B_2(0)$ which equals $1$ on $B_1(0)$.

Let us treat the term $P$ first.
In it we split the integral into the contributions of $\varpi^1$ and $\varpi^2$ and substitute in the corresponding flows to see that \begin{align}\begin{aligned}\label{def P}
&P(x)\leq \Biggl|\int_{\R^2}\biggl(\left(\frac{(x-\mathcal{X}_t^1(z))}{|x-\mathcal{X}_t^1(z)|^2}-\frac{\mathcal{X}_t^1(z)}{|\mathcal{X}_t^1(z)|^2}+\frac{x|\mathcal{X}_t^1(z)|^2-(x\cdot \mathcal{X}_t^1(z))\mathcal{X}_t^1(z)}{|\mathcal{X}_t^1(z)|^4}\right)\cdot  \bar u_t^1(\mathcal{X}_t^1(z))\\
&-\biggl(\frac{(x-\mathcal{X}_t^2(z))}{|x-\mathcal{X}_t^2(z)|^2}-\frac{\mathcal{X}_t^2(z)}{|\mathcal{X}_t^2(z)|^2}+\frac{x|\mathcal{X}_t^2(z)|^2-(x\cdot \mathcal{X}_t^2(z))\mathcal{X}_t^2(z)}{|\mathcal{X}_t^2(z)|^4}\biggr)\cdot  \bar u_t^1(\mathcal{X}_t^2(z))\biggr)(\varpi_0(z)-\tilde{\eta}(z)\varpi_0(0))\dd z\Biggr|\\
&\qquad+\left|\int_{\R^2}\nabla_z(\log|x-z|-\log|z|+\frac{x\cdot z}{|z|^2})\bar u_t^1(z)\left(\tilde{\eta}\circ (\mathcal{X}_t^1)^{-1}(z)+\tilde{\eta}\circ (\mathcal{X}_t^2)^{-1}(z)\right)\varpi_0(0)\dd z \right|\\
&=:P'(x)+P''(x).
\end{aligned}\end{align}
We introduce the standard dyadic annuli \begin{align}
A_n:=B_{2^{-n}}(0)\backslash B_{2^{-(n+1)}}(0). \label{def annuli}
\end{align}
We also introduce the fattened annuli and the inner and outer regions \begin{align}I_n:=\{0\}\cup\!\!\bigcup_{m\geq n+2} A_m,\quad\bar{A}_n:=A_{n+1}\cup A_n\cup A_{n-1},\quad \tilde{A}_n:=\bigcup_{i=-2}^2 A_{n+i},\quad  O_n:=\bigcup_{-\lceil \log_2 S_2\rceil\leq m\leq n-2} A_m.\label{def regions}
\end{align}
%

\noindent We now realize that the point vortex velocity is orthogonal to the $x$-independent terms and obtain that the integrand in $P'$ can be rewritten as \allowdisplaybreaks \begin{align*}
&\biggl(\left(\frac{(x-\mathcal{X}_t^1(z))}{|x-\mathcal{X}_t^1(z)|^2}-\frac{\mathcal{X}_t^1(z)}{|\mathcal{X}_t^1(z)|^2}+\frac{x|\mathcal{X}_t^1(z)|^2-(x\cdot \mathcal{X}_t^1(z))\mathcal{X}_t^1(z)}{|\mathcal{X}_t^1(z)|^4}\right)\cdot  \bar u_t^1(\mathcal{X}_t^1(z))\\
&\quad-\left(\frac{(x-\mathcal{X}_t^2(z))}{|x-\mathcal{X}_t^2(z)|^2}-\frac{\mathcal{X}_t^2(z)}{|\mathcal{X}_t^2(z)|^2}+\frac{x|\mathcal{X}_t^2(z)|^2-(x\cdot \mathcal{X}_t^2(z))\mathcal{X}_t^2(z)}{|\mathcal{X}_t^2(z)|^4}\right)\cdot  \bar u_t^1(\mathcal{X}_t^2(z)))\biggr)(\varpi_0(z)-\tilde{\eta}(z)\varpi_0(0))\\
&=\biggl(\frac{(x-\mathcal{X}_t^1(z))}{|x-\mathcal{X}_t^1(z)|^2}-\frac{(x-\mathcal{X}_t^2(z))}{|x-\mathcal{X}_t^2(z)|^2}\biggr)\cdot \bar u_t^1(\mathcal{X}_t^1(z))(\varpi_0(z)-\tilde{\eta}(z)\varpi_0(0))\\
&\quad+\frac{(x-\mathcal{X}_t^2(z))}{|x-\mathcal{X}_t^2(z)|^2}\cdot\bigl(\bar u_t^1(\mathcal{X}_t^1(z))-\bar u_t^1(\mathcal{X}_t^2(z))\bigr)(\varpi_0(z)-\tilde{\eta}(z)\varpi_0(0))\\
&\quad +\left(\frac{x|\mathcal{X}_t^1(z)|^2-(x\cdot \mathcal{X}_t^1(z))\mathcal{X}_t^1(z)}{|\mathcal{X}_t^1(z)|^4}-\frac{x|\mathcal{X}_t^2(z)|^2-(x\cdot \mathcal{X}_t^2(z))\mathcal{X}_t^2(z)}{|\mathcal{X}_t^2(z)|^4}\right)\cdot  \bar u_t^1(\mathcal{X}_t^1(z)) (\varpi_0(z)-\tilde{\eta}\varpi_0(0))\\
&\quad +\frac{x|\mathcal{X}_t^2(z)|^2-(x\cdot \mathcal{X}_t^2(z))\mathcal{X}_t^2(z)}{|\mathcal{X}_t^2(z)|^4}\cdot\bigl(\bar u_t^1(\mathcal{X}_t^1(z))-\bar u_t^1(\mathcal{X}_t^2(z))\bigr)(\varpi_0(z)-\tilde{\eta}(z)\varpi_0(0))\\
&\quad -\left(\frac{\mathcal{X}_t^1(z)}{|\mathcal{X}_t^1(z)|^2}-\frac{\mathcal{X}_t^2(z)}{|\mathcal{X}_t^2(z)|^2}\right)\cdot (v_t^1(\mathcal{X}_t^1(z))-v_t^1(0))(\varpi_0(z)-\tilde{\eta}(z)\varpi_0(0))\\
&\quad -\frac{\mathcal{X}_t^2(z)}{|\mathcal{X}_t^2(z)|^2}\cdot \left(v_t^1(\mathcal{X}_t^1(z))-v_t^1(\mathcal{X}_t^2(z))\right)(\varpi_0(z)-\tilde{\eta}(z)\varpi_0(0))\\
&=:\sum_{i=1}^6 p_i(x,z).
\end{align*}
%

We will estimate the weighted $L^2$ norm of $P'$ as follows: \begin{align}\begin{aligned}\label{splitting p'}
\mel\norm{|x|^{\beta/2-2}P'}_{L^2(B_\frac{1}{2}(0))}\\
&\lesssim \sum_{i=1}^6 \norm{\sum_{m\geq 1}\mathds{1}_{A_m}(x)|x|^{\beta/2-2}\int_{\bar{A}_m}p_i(x,z)\dd z}_{L^2(\R^2)}+\norm{\sum_{m\geq 1}\mathds{1}_{A_m}(x)|x|^{\beta/2-2}\int_{I_m}p_i(x,z)\dd z}_{L^2(\R^2)}\\
&\quad+\norm{\sum_{m\geq 1}\mathds{1}_{A_m}(x)|x|^{\beta/2-2}\int_{O_m}(p_1+p_3+p_5)(x,z)\dd z}_{L^2(\R^2)}\\
&\quad+\norm{\sum_{m\geq 1}\mathds{1}_{A_m}(x)|x|^{\beta/2-2}\int_{O_m}(p_2+p_4+p_6)(x,z)\dd z}_{L^2(\R^2)},
\end{aligned}\end{align}
which uses that both $\varpi_0$ and $\tilde{\eta}$ are $0$ outside of $B_{S_2}(0)\subset I_m\cup \bar{A}_m\cup O_m$.
Before estimating these terms, let us point out that a pointwise estimate of the type \begin{align}
    \left|\int_{\text{$\bar{A}_n$ or $I_n$}}p_i(x,z)\dd z \right|\lesssim |x|\sqrt{E_t}\log\left(2+\frac{1}{E_t}\right)\label{app pw est}
\end{align}
for $x\in A_n$ implies that the weighted $L^2$-norm of the corresponding summand has the desired bound (i.e. $\norm{\sum_{m\geq 1}\mathds{1}_{A_m}(x)|x|^{\beta/2-2}\int_{\text{$\bar{A}_m$ or $I_m$}}p_i(x,z)\dd z}_{L^2(\R^2)}\lesssim \sqrt{E_t}\log(2+1/E_t)$), as one sees directly from summing up. The same also holds for the integrals over $O_m$.

We now estimate these $14$ terms separately. 
We first note that on $I_n$ and $\bar{A}_n$ it holds $\tilde{\eta}=1$.
We rewrite, using the substitution rule \begin{align*}
\left|\int_{\bar{A}_n}p_1(x,z)\dd z\right|=\left|\int_{\R^2}\frac{(x-z)}{|x-z|^2}\cdot \left(\widetilde{\varpi}(n)\circ (\mathcal{X}_t^1)^{-1}(z)-\widetilde{\varpi}(n)\circ (\mathcal{X}_t^2)^{-1}(z)\right)\dd z\right|
\end{align*}
where $\widetilde{\varpi}(n)(y):=\mathds{1}_{\bar{A}_n}\bar u_t^1(\mathcal{X}_t^1(y))(\varpi_0(y)-\varpi_0(0))$. 

We may now recognize that this convolution is the Biot-Savart law and apply the $L^2$-estimate from Lemma \ref{l2 lemma} to this to see that \begin{align*}
\norm{\int_{\R^2}\frac{(x-z)}{|x-z|^2}\cdot \left(\tilde{\varpi}(n)\circ (\mathcal{X}_t^1)^{-1}-\tilde{\varpi}(n)\circ (\mathcal{X}_t^2)^{-1}\right)\dd z}_{L_x^2(A_n)}\lesssim \norm{\mathcal{X}_t^1-\mathcal{X}_t^2}_{L^2(\tilde{A}_n)}\norm{\tilde{\varpi}(n)}_{L^\infty},
\end{align*}
where we have also used that by \eqref{mod1}, the preimage of $\bar{A}_n$ under both $\mathcal{X}_t^1$ and $\mathcal{X}_t^2$ is contained in $\tilde{A}_n$.

We have \begin{align*}
 \norm{\widetilde{\varpi}(n)}_{L^\infty} \lesssim  2^{-(\alpha-1)n}
\end{align*}
because $|\bar u_t^1(\mathcal{X}_t^1(z))|\lesssim |z|^{-1}$ by \eqref{uni bd u} and \eqref{mod1} and because of the estimate \eqref{ass later t}.

In sum, we obtain that \begin{align*}\begin{aligned}
\mel \norm{\sum_{m\geq 1} \left|\mathds{1}_{x\in A_m}|x|^{-2+\frac{\beta}{2}}\int_{\bar{A}_m}p_1(x,z)\dd z\right|}_{L^2}^2\lesssim \sum_{m\geq 1} 2^{(4-\beta)m}\norm{\mathcal{X}_t^1-\mathcal{X}_t^2}_{L^2(\tilde{A}_m)}^2\norm{\tilde{\varpi}(m)}_{L^\infty}^2\\
&\lesssim \sum_{m\geq -1} 2^{-2(\alpha-2)m}\norm{D(\mathcal{X}_t^1,\mathcal{X}_t^2)}_{L^2(A_n)}^2  
\lesssim E_t,
\end{aligned}\end{align*}
where the penultimate step used \eqref{D m} and the last one used that $\alpha>2$.

On the region $I_n$, we can simply estimate the integral of $p_1$ pointwise. For $x\in A_n$, we have \begin{align*}
\left|\int_{I_n}p_1(x,z)\dd z\right|\lesssim \left|\int_{I_n} \frac{|\mathcal{X}_t^1(z)-\mathcal{X}_t^2(z)|}{|x|^2}|\bar u_t^1(\mathcal{X}_t^1(z))||\varpi_0(z)-\varpi_0(0)|\dd z\right|
\end{align*}
because $\dist(x,\mathcal{X}_t^1(I_n))\approx \dist(x,\mathcal{X}_t^2(I_n))\approx x$ by definition and \eqref{mod1}. Since we also have \begin{align*}
|\bar u_t^1(\mathcal{X}_t^1(z))||\varpi_0(z)-\varpi_0(0)|\lesssim |z|^{\alpha-1}
\end{align*}
by \eqref{ass later t}, \eqref{uni bd u} and \eqref{mod1}, we see that \begin{align*}
\mel\int_{I_n} \frac{|\mathcal{X}_t^1(z)-\mathcal{X}_t^2(z)|}{|x|^2}|\bar u_t^1(\mathcal{X}_t^1(z))||\varpi_0(z)-\varpi_0(0)|\dd z\lesssim \int_{I_n} \frac{|\mathcal{X}_t^1(z)-\mathcal{X}_t^2(z)|}{|x|^2}|z|^{\alpha-1}\dd z\\
&\lesssim \sqrt{E_t}|x|^{\frac{2\alpha-\beta-2}{2}},
\end{align*}
where the last step uses Cauchy-Schwarz, that $I_n$ is a ball of radius $|x|$, and \ref{D m}. Upon noting that $\frac{2\alpha-\beta-2}{2}>1$, we note that the pointwise estimate \eqref{app pw est} holds for this term.


\noindent We proceed to the next term, by \eqref{mod1}, \eqref{ass later t} and \eqref{ll1}, we have, for $x\in A_n$,  \begin{align*}
\mel\left|\int_{\bar{A}_n}p_2(x,z)\dd z\right|\lesssim \int_{\bar{A}_n} \frac{1}{|x-\mathcal{X}_t^2(z)|}k_t(z)\log\left(2+\frac{1}{k_t(z)}\right)|z|^{\alpha-2}\dd z\\
&\approx |x|^{\alpha-1-\beta/2}\int_{(\mathcal{X}_t^2)^{-1}\bar{A}_n} \frac{1}{|x-z|}\left(k_t\log\left(2+\frac{1}{k_t}\right)\right)\circ (\mathcal{X}_t^2)^{-1}(z)|z|^{\beta/2-1}\dd z,
\end{align*}
where the last step uses the substitution rule and that $|z|\approx |x|$ on this region.

To estimate this integral, we use Young's convolution inequality to see that \begin{align*}
\mel\norm{\int_{(\mathcal{X}_t^2)^{-1}\bar{A}_n} \frac{|z|^{\beta/2-1}}{|x-z|}\left(k_t\log\left(2+\frac{1}{k_t}\right)\right)\circ (\mathcal{X}_t^2)^{-1}(z)\dd z}_{L_x^2(A_n)}\\
&\lesssim \norm{\mathds{1}_{B_{2^{-(n-5)}(0)}}\frac{1}{|\cdot|}}_{L^1}\norm{|\cdot|^{\beta/2-1}k_t\log\left(2+\frac{1}{k_t}\right)\circ (\mathcal{X}_t^2)^{-1}}_{L^2(\bar{A}_n)}\\
&\lesssim2^{-n}\norm{|\cdot|^{\beta/2-1}k_t\log\left(2+\frac{1}{k_t}\right)}_{L^2(\tilde{A}_n)},
\end{align*}
where we have used that $(\mathcal{X}_t^2)^{-1}\bar{A}_n\subset \tilde{A}_n$ by \eqref{mod1} and that for $z\in \tilde{A}_n$ and $x\in \bar{A}_n$ it holds that $|x-z|\leq 2^{-(n-5)}$.

To further estimate this $L^2$-norm, we apply Jensen's inequality with the concave function $y\log(2+y^{-\frac{1}{2}})^2$ and the probability measure $|\tilde{A}_n|^{-1}\mathds{1}_{\tilde{A}_n}$ to see that \begin{align}\begin{aligned}
\mel\norm{|\cdot|^{\beta/2-1}k_t\log\left(2+\frac{1}{k_t}\right)}_{L^2(\tilde{A}_n)}\lesssim \left(\int_{\tilde{A_n}}k_t(z)^2|z|^{\beta-2}\dd z\right)^\frac{1}{2}\log\left(2+\frac{1}{\left(\fint_{\tilde{A}_n}k_t(z)^2|z|^{\beta-2}\dd z\right)^\frac{1}{2}}\right)\\
&\lesssim \sqrt{E_t}\log\left(2+\frac{2^{n}}{\sqrt{E_t}}\right)\lesssim \sqrt{E_t}\left(n+\log\left(2+\frac{1}{E_t}\right)\right),\label{jensens}
\end{aligned}\end{align}
where we have used \eqref{D m}.

Combining the previous estimates, we see that \begin{align*}
 \mel   \norm{\sum_{m\geq 1}\mathds{1}_{A_m}(x)|x|^{\beta/2-2}\left|\int_{\bar{A}_m}p_2(x,z)\dd z\right|}_{L_x^2}^2\lesssim \sum_{m\geq 0} 2^{-2(\alpha-\beta/2-2+\beta/2)m}E_t\left(n^2+\log\left(2+\frac{1}{E_t}\right)^2\right)\\
    &\lesssim E_t \log\left(2+\frac{1}{E_t}\right)^2.
\end{align*}

\noindent In the inner region we can estimate pointwise, for $x\in A_n$, using the bounds \eqref{uni bd u}, \eqref{ass later t} and \eqref{mod1} and the fact that by \eqref{mod1} and the definition of $I_n$ we have that $|x-\mathcal{X}_t^1(z)|\approx |x|$ \begin{align*}
   \mel \left|\int_{I_n}p_2(x,z)\dd z\right|\lesssim \int_{I_n} |x|^{-1}|z|^{\alpha-2} k_t(z)\log\left(2+\frac{1}{k_t(z)}\right)\dd z\\
   &\lesssim |x|^{\alpha-2-\frac{\beta}{2}}\int_{I_n} k_t(z)\log\left(2+\frac{1}{k_t(z)}\right)|z|^{\frac{\beta}{2}-1}\dd z\\
   &\lesssim |x|^{\alpha-1-\frac{\beta}{2}}\left(\int_{I_n} k_t(z)^2\log\left(2+\frac{1}{k_t(z)}\right)^2|z|^{\beta-2}\dd z\right)^\frac{1}{2},
\end{align*}
where the last step is Cauchy-Schwarz combined with the fact that $|I_n|\approx |x|^2$. 
Applying Jensen's inequality as in \eqref{jensens} above, we see that \begin{align*}
    \left(\int_{I_n} k_t(z)^2\log\left(2+\frac{1}{k_t(z)}\right)^2|z|^{\beta-2}\dd z\right)^\frac{1}{2}\lesssim E_t\left(|\log|x||+\log\left(2+\frac{1}{E_t}\right)\right),
\end{align*}
giving us the desired bound of the type \eqref{app pw est} over this region since $|x|^{\alpha-1-\frac{\beta}{2}}|\log|x||\lesssim |x|$.

We move on to the term $p_3$ for $x\in A_n$, here one can treat the regions $I_n$ and $\bar{A}_n$ simultaneously.
Estimating the difference with the derivative and using that $|\mathcal{X}_t^1(z)|\approx |\mathcal{X}_t^2(z)|\approx |z|$ by \eqref{mod1}, we see that \begin{align*}
  \mel  \biggl|\frac{x|\mathcal{X}_t^1(z)|^2-(x\cdot \mathcal{X}_t^1(z))\mathcal{X}_t^1(z)}{|\mathcal{X}_t^1(z)|^4}-\frac{x|\mathcal{X}_t^2(z)|^2-(x\cdot \mathcal{X}_t^2(z))\mathcal{X}_t^2(z)}{|\mathcal{X}_t^2(z)|^4}\biggr|\lesssim|x||z|^{-3}|\mathcal{X}_t^1(z)-\mathcal{X}_t^2(z)|\\
    &\lesssim  |x||z|^{-3}k_t(z).
\end{align*}
Together with \eqref{uni bd u}, \eqref{ass later t}, \eqref{mod1} and \eqref{D m}, this implies that \begin{align*}
    |p_3(x,z)|\lesssim |x||z|^{\alpha-4}k_t(z)\lesssim |x||z|^{\alpha-3-\frac{\beta}{2}}D(\mathcal{X}_t^1(z),\mathcal{X}_t^2(z)).
\end{align*}
Since $\alpha-3-\frac{\beta}{2}>\frac{\beta}{2}-1$, we conclude by Cauchy-Schwarz and the fact that $I_n\cup \bar{A}_n$ is a ball of radius $\approx |x|$ that \begin{align*}
\int_{I_n\cup \bar{A}_n}|p_3(x,z)|\dd z\lesssim |x|\sqrt{E_t},
\end{align*}
which is a pointwise bound of the type \eqref{app pw est}.

We treat $p_4$ next, again for $x\in A_n$, for which $I_n$ and $\bar{A}_n$ can be treated together too. Using yet again \eqref{ll1} and \eqref{mod1}, we see that we have \begin{align*}
    |p_4(x,z)|\lesssim |x||z|^{\alpha-4} k_t(z)\log \left(2+\frac{1}{k_t(z)}\right),
\end{align*}
and hence we can estimate \begin{align*}
\mel \left|\int_{I_n\cup \bar{A}_n}p_4(x,z)\dd z\right|\lesssim \int_{I_n\cup \bar{A}_n}|x||z|^{\alpha-4} k_t(z)\log \left(2+\frac{1}{k_t(z)}\right)\dd z\\
& \lesssim|x|^{\alpha-1-\beta/2}\left(\int_{I_n\cup \bar{A}_n} k_t(z)^2\log \left(2+\frac{1}{k_t(z)}\right)^2|z|^{\beta-2}\dd z\right)^\frac{1}{2}
\end{align*}
where the second step consists of applying Cauchy-Schwarz and estimating $|z|\lesssim |x|$.

This can be estimated by $\lesssim |x|^{\alpha-1-\beta/2}\sqrt{E_t}\left(|\log|x||+\log\left(2+\frac{1}{E_t}\right)\right)$ by the same argument as in \eqref{jensens} and in particular has a pointwise bound of the type \eqref{app pw est} as $\alpha-1-\beta/2>1$.

We move on to the $x$-independent terms $p_5$ and $p_6$, for which one can handle the regions $I_n\cup \bar{A}_n$ together as well for $x\in A_n$. Using that by \eqref{mod1}, we have \begin{align*}
\left|\frac{\mathcal{X}_t^1(z)}{|\mathcal{X}_t^1(z)|^2}-\frac{\mathcal{X}_t^2(z)}{|\mathcal{X}_t^2(z)|^2}\right|\lesssim|z|^{-2}|\mathcal{X}_t^1(z)-\mathcal{X}_t^2(z)|\lesssim  |z|^{-2}k_t(z),
\end{align*}
we can estimate, using \eqref{ass later t} and the bound \eqref{est v} on $v_t^1$ \begin{align*}
   \int_{I_n\cup \bar{A}_n} |p_5(x,z)|\dd z\lesssim \int_{I_n\cup \bar{A}_n}k_t(z)|z|^{\alpha-1}\dd z\lesssim |x|^{\alpha-\frac{\beta}{2}}\int_{I_n\cup \bar{A}_n}k_t(z)|z|^{\frac{\beta}{2}-1}\dd z\lesssim |x|^{\alpha-\frac{\beta}{2}}\sqrt{E_t}
\end{align*}
where the last step is Cauchy-Schwarz, combined with \eqref{D m}. This has the desired pointwise bound \eqref{app pw est}.

For the last summand $p_6$, we use that $v_t$ is log-Lipschitz by Lemma \ref{ll lemma app} and use the bounds \eqref{ass later t} and \eqref{mod1} to estimate \begin{align*}
    \int_{I_n\cup\bar{A}_n}|p_6(x,z)|\dd y\lesssim \int_{I_n\cup\bar{A}_n} |z|^{\alpha-1}k_t(z)\log\left(2+\frac{1}{k_t(z)}\right)\dd z.
\end{align*}
This term can be estimated similarly as the one in \eqref{jensens}, yielding a bound of the type \eqref{app pw est} (in fact, it is even an order smaller).

\subsubsection{The integrals over $O_n$ in $P'$}
We now proceed to the outer $O_n$ in the estimate of $P$, here we do split things differently in order to recover the second-order zero of $\Psi_R^1$. We regroup \begin{align}\begin{aligned}\label{rewr p1p3}
\mel p_1(x,z)+p_3(x,z)+p_5(x,z)=\biggl(\frac{(x-\mathcal{X}_t^1(z))}{|x-\mathcal{X}_t^1(z)|^2}-\frac{\mathcal{X}_t^1(z)}{|\mathcal{X}_t^1(z)|^2}+\frac{x|\mathcal{X}_t^1(z)|^2-(x\cdot \mathcal{X}_t^1(z))\mathcal{X}_t^1(z)}{|\mathcal{X}_t^1(z)|^4}\\
&\quad\!-\frac{(x-\mathcal{X}_t^2(z))}{|x-\mathcal{X}_t^2(z)|^2}+\frac{\mathcal{X}_t^2(z)}{|\mathcal{X}_t^2(z)|^2}-\frac{x|\mathcal{X}_t^2(z)|^2-(x\cdot \mathcal{X}_t^2(z))\mathcal{X}_t^2(z)}{|\mathcal{X}_t^2(z)|^4}\biggr)  \cdot \bar u_t^1(\mathcal{X}_t^1(z))\left(\varpi_0(z)-\tilde{\eta}\varpi_0(0)\right)
\end{aligned}\end{align}
and \begin{align}\label{rewr p2p4}
\begin{aligned}p_2(x,z)+p_4(x,z)+p_6(x,z)=& \left(\frac{(x-\mathcal{X}_t^2(z))}{|x-\mathcal{X}_t^2(z)|^2}-\frac{\mathcal{X}_t^2(z)}{|\mathcal{X}_t^2(z)|^2}+\frac{x|\mathcal{X}_t^2(z)|^2-(x\cdot \mathcal{X}_t^2(z))\mathcal{X}_t^2(z)}{|\mathcal{X}_t^2(z)|^4}\right)\\
&\cdot \left(\bar u_t^1(\mathcal{X}_t^1(z))-\bar u_t^1(\mathcal{X}_t^2(z))\right)\left(\varpi_0(z)-\tilde{\eta}(z)\varpi_0(0)\right).
\end{aligned}\end{align}

%
\noindent In the first group, we claim that for $x\in A_n$ and $z\in O_n$ it holds that \begin{align}\begin{aligned}
\mel \biggl|\frac{(x-\mathcal{X}_t^1(z))}{|x-\mathcal{X}_t^1(z)|^2}-\frac{\mathcal{X}_t^1(z)}{|\mathcal{X}_t^1(z)|^2}+\frac{x|\mathcal{X}_t^1(z)|^2-(x\cdot \mathcal{X}_t^1(z))\mathcal{X}_t^1(z)}{|\mathcal{X}_t^1(z)|^4}\\
&\qquad-\frac{(x-\mathcal{X}_t^2(z))}{|x-\mathcal{X}_t^2(z)|^2}+\frac{\mathcal{X}_t^2(z)}{|\mathcal{X}_t^2(z)|^2}-\frac{x|\mathcal{X}_t^2(z)|^2-(x\cdot \mathcal{X}_t^2(z))\mathcal{X}_t^2(z)}{|\mathcal{X}_t^2(z)|^4}\biggr|\lesssim |x||z|^{\alpha-3}k_t(z).\label{claim p1p3}
\end{aligned}\end{align}
We will need to distinguish the cases $|\mathcal{X}_t^1(z)-\mathcal{X}_t^2(z)|\leq \frac{1}{100}|z|$ and $|\mathcal{X}_t^1(z)-\mathcal{X}_t^2(z)|\geq \frac{1}{100}|z|$. 

In the first case, we may estimate, using the shorthand $g(p,q):=\log(|p-q|^2)$ \begin{align*}
\mel \biggl|\frac{(x-\mathcal{X}_t^1(z))}{|x-\mathcal{X}_t^1(z)|^2}-\frac{\mathcal{X}_t^1(z)}{|\mathcal{X}_t^1(z)|^2}+\frac{x|\mathcal{X}_t^1(z)|^2-(x\cdot \mathcal{X}_t^1(z))\mathcal{X}_t^1(z)}{|\mathcal{X}_t^1(z)|^4}\\
&\qquad-\frac{(x-\mathcal{X}_t^2(z))}{|x-\mathcal{X}_t^2(z)|^2}+\frac{\mathcal{X}_t^2(z)}{|\mathcal{X}_t^2(z)|^2}-\frac{x|\mathcal{X}_t^2(z)|^2-(x\cdot \mathcal{X}_t^2(z))\mathcal{X}_t^2(z)}{|\mathcal{X}_t^2(z)|^4}\biggr|\\
&= \Bigl| \nabla_q g(x,\mathcal{X}_t^1(z))-\nabla_q g(0,\mathcal{X}_t^1(z))-\nabla_{pq}^2(0,\mathcal{X}_t^1(z))\cdot x\\
&\quad-\left(\nabla_q g(x,\mathcal{X}_t^2(z))-\nabla_q g(0,\mathcal{X}_t^2(z))-\nabla_{pq}^2(0,\mathcal{X}_t^2(z))\cdot x\right)\Bigr|\\
&=\biggl|\int_0^1\int_0^1\left( \nabla_{pqq}^3 g(s_1\frac{x}{|x|},(1-s_2)\mathcal{X}_t^1(z)+s_2\mathcal{X}_t^2(z))-\nabla_{pqq}^3g(0,(1-s_2)\mathcal{X}_t^1(z)+s_2\mathcal{X}_t^2(z))\right)\\
&\quad:x\otimes (\mathcal{X}_t^1(z)-\mathcal{X}_t^2(z))\dd s_1\dd s_2\biggr|
\end{align*}
by applying the fundamental theorem of calculus twice. 
We now note that the line between $\mathcal{X}_t^1(z)$ and $\mathcal{X}_t^2(z)$ lies outside of $B_{\frac{3}{4}|z|}(0)$ thanks to \eqref{mod1} and the assumption in the case distinction, in particular it has a distance $\approx |z|$ from $B_{|x|}(0)$ and therefore we can estimate \begin{align*}
\mel\biggl|\int_0^1\int_0^1\left( \nabla_{pqq}^3 g(s_1\frac{x}{|x|},(1-s_2)\mathcal{X}_t^1(z)+s_2\mathcal{X}_t^2(z))-\nabla_{pqq}^3g(0,(1-s_2)\mathcal{X}_t^1(z)+s_2\mathcal{X}_t^2(z))\right)\\
&\quad:x\otimes (\mathcal{X}_t^1(z)-\mathcal{X}_t^2(z))\dd s_1\dd s_2\biggr|\\
&\lesssim |x||z|^{-3}k_t(z). 
\end{align*}
The other case $|\mathcal{X}_t^1(z)-\mathcal{X}_t^2(z)|\geq \frac{1}{100}|z|$ proceeds similarly, except we only use the fundamental theorem of calculus once to estimate \begin{align*}
   & \Bigl| \nabla_q g(x,\mathcal{X}_t^1)-\nabla_q g(0,\mathcal{X}_t^1)-\nabla_{pq}^2(0,\mathcal{X}_t^1)\cdot x-\left(\nabla_q g(x,\mathcal{X}_t^2)-\nabla_q g(0,\mathcal{X}_t^2)-\nabla_{pq}^2(0,\mathcal{X}_t^2)\cdot x\right)\Bigr|\\
   &\lesssim \int_0^1\left| \nabla_{pq}^2 g(s_1\frac{x}{|x|},\mathcal{X}_t^1(z))-\nabla_{pq}^{2}g(0,\mathcal{X}_t^1(z))\right||x|+\left| \nabla_{pq}^2 g(s_1\frac{x}{|x|},\mathcal{X}_t^2(z))-\nabla_{pq}^{2}g(0,\mathcal{X}_t^2(z))\right||x|\dd s\\
   &\lesssim |x||z|^{-2}\lesssim |x||z|^{-3}k_t(z) 
\end{align*} 
where we have again used that $\mathcal{X}_t^1(z),\mathcal{X}_t^2(z)$ are outside of $B_{|x|}(0)$ and used the assumption in the case distinction in the last step.
This shows the claim \eqref{claim p1p3}.

We have that $|\varpi_0(z)-\tilde{\eta}(z)\varpi_0(0)|\lesssim |z|^\alpha$ by \eqref{ass later t} and since $\tilde{\eta}=1$ near 0.
Combining this with the rewriting \eqref{rewr p1p3} and the bounds \eqref{claim p1p3}, \eqref{uni bd u} and \eqref{mod1} shows that \begin{align*}
\left|\int_{O_n} p_1(x,z)+p_3(x,z)+p_5(x,z)\dd z\right|\lesssim |x|\left|\int_{O_n}k_t(z)|z|^{\alpha-4}\dd z \right|\lesssim \sqrt{E_t}\
\end{align*}
where the second step uses \eqref{D m} and that $\alpha-2-\beta>0$.
This has a pointwise bound of the type \eqref{app pw est}.

We continue with the second group $p_2(x,z)+p_4(x,z)+p_6(x,z)$. Here we can estimate, for $x\in A_n$ and $z\in O_n$, similarly as in \eqref{claim p1p3} above \begin{align*}
\left|\frac{(x-\mathcal{X}_t^2(z))}{|x-\mathcal{X}_t^2(z)|^2}-\frac{\mathcal{X}_t^2(z)}{|\mathcal{X}_t^2(z)|^2}+\frac{x|\mathcal{X}_t^2(z)|^2-(x\cdot \mathcal{X}_t^2(z))\mathcal{X}_t^2(z)}{|\mathcal{X}_t^2(z)|^4}\right|\lesssim |x||z|^{-2},
\end{align*}
which, together with the log-Lipschitz property \eqref{ll1} of $\bar u_t^1$, \eqref{ass later t}, \eqref{mod1} and the rewritten form \eqref{rewr p2p4} yields  \begin{align*}
\mel \left|\int_{O_n} p_2(x,z)+p_4(x,z)+p_6(x,z)\dd z\right|\leq |x|\int_{O_n} k_t(z)\log\left(2+\frac{1}{k_t(z)}\right)|z|^{\alpha-4}\dd z\\
&\lesssim |x|^{\alpha-1-\beta}\left(\int_{O_n} k_t(z)^2\log\left(2+\frac{1}{k_t(z)}\right)^2|z|^{\beta-2}\dd z\right)^\frac{1}{2},
\end{align*}
where the second step uses Cauchy-Schwarz and that $|z|\lesssim 1$. Jensen's inequality then shows that
\begin{align*}
    \left|\int_{O_n} p_2(x,z)+p_4(x,z)+p_6(x,z)\dd z\right|\lesssim |x|\sqrt{E_t}\log\left(2+\frac{1}{E_t}\right),
\end{align*}
which is a bound of the desired type \eqref{app pw est}.

Coming back to the splitting \eqref{splitting p'}, all these estimates combined show that \begin{align}\label{fin est p'}
    \norm{|\cdot|^{\beta/2-2}P'}_{L^2(B_\frac{1}{2}(0))}\lesssim \sqrt{E_t}\log\left(2+\frac{1}{E_t}\right).
\end{align}

\subsubsection{The term $P''$}
Let us note that by \eqref{mod1}, the difference $\tilde{\eta}\circ (\mathcal{X}_t^1)^{-1}-\tilde{\eta}\circ (\mathcal{X}_t^2)^{-1}$ in the definition \eqref{def P} of $P''$ can only be nonzero if $|z|\in (\frac{3}{4},3)$. We note that for $|x|\leq \frac{1}{2}$ and $|z|\in (\frac{3}{4},3)$ we have \begin{align*}
    \left|\nabla_z(\log|x-z|-\log|z|+\frac{x\cdot z}{|z|^2})\right|+\left|\nabla_z^2(\log|x-z|-\log|z|+\frac{x\cdot z}{|z|^2})\right|\lesssim |x|^2,
\end{align*}
uniformly in $x$ and $z$ as one sees from the fundamental theorem of calculus. We also have that $u_t^1$ is log-Lipschitz on that region by \eqref{ll1}.

We can hence take a  function $h_t=h_t(x,z)$ which agrees with $\nabla_z(\log|x-z|-\log|z|+\frac{x\cdot z}{|z|^2})\cdot u_t^1(z)$ on $B_{\frac{1}{2}}(0)\times \left(B_3(0)\backslash B_{\frac{3}{4}}(0)\right)$ and which enjoys the bound \begin{align*}|h_t(x,z)|+\frac{|h_t(x,z)-h_t(x,z')|}{|z-z'|\log(2+\frac{1}{|z-z'|})}\lesssim |x|^2
\end{align*}
on all of $B_1(0)\times B_3(0)$, uniformly in $t$. We therefore see that \begin{align*}
\mel P''(x)=\left|\int_{\R^2}h_t(x,z)\left(\tilde{\eta}\circ (\mathcal{X}_t^1)^{-1}(z)-\tilde{\eta}\circ (\mathcal{X}_t^2)^{-1}(z)\right)\varpi_0(0)\dd z \right|\\
&=\left|\int_{\R^2}\left(h_t(x,\mathcal{X}_t^1(z))-h_t(x,\mathcal{X}_t^2(z))\right)\tilde{\eta}(z)\varpi_0(0)\dd z \right|\\
&\lesssim |x|^2\int_{\R^2}\left|\mathcal{X}_t^1(z)-\mathcal{X}_t^2(z)\right|\log\left(2+\frac{1}{\left|\mathcal{X}_t^1(z)-\mathcal{X}_t^2(z)\right|}\right)\tilde{\eta}(z)\dd z.
\end{align*}
Using Jensen's inequality with the concave function $y\log(2+y^{-1})$ and Cauchy-Schwarz, we see that \begin{align*}
    P''(x)\lesssim |x|^2\sqrt{E_t}\log\left(2+\frac{1}{E_t}\right),
\end{align*}
yielding the desired bound \begin{align}
    \norm{|\cdot|^{\beta/2-2}P''}_{L^2(B_{\frac{1}{2}}(0))}\lesssim \sqrt{E_t}\log\left(2+\frac{1}{E_t}\right).\label{fin est p''}
\end{align}

\subsubsection{The term $Q$}
We move on to the analysis of the term $Q$.
We split the integral into the contributions from $I_n\cup \bar{A}_n$ and $O_n$. For $x\in A_n$ with $n\geq 1$ we have $\tilde{\eta}=1$ on $I_n\cup \bar{A}_n$ and can hence estimate, using \eqref{ass later t} and that $|z|\lesssim |x|$ on that region \begin{align}
\mel\left|\int_{I_n\cup \bar{A}_n}\nabla_z(\log|x-z|-\log|z|+\frac{x\cdot z}{|z|^2})\left(v_t^1(z)-v_t^1(0)-v_t^2(z)+v_t^2(0)\right)\left(\varpi_t^2(z)-\tilde{\eta}(z)\varpi_t^2(0)\right)\dd z\right|\nonumber\\
&\lesssim \int_{I_n\cup \bar{A}_n} \left(\frac{1}{|x-z|}+\frac{1}{|z|}+\frac{|x|}{|z|^{2}}\right)\left|v_t^1(z)-v_t^2(z)\right||z|^\alpha\dd z\nonumber\\
&\quad+|v_t^1(0)-v_t^2(0)|\int_{I_n\cup \bar{A}_n} \left(\frac{1}{|x-z|}+\frac{1}{|z|}+\frac{|x|}{|z|^{2}}\right)|z|^\alpha\dd z\nonumber\\
&\begin{aligned}&\lesssim |x|^\alpha\int_{I_n\cup \bar{A}_n} \frac{1}{|x-z|}\left|v_t^1(z)-v_t^2(z)\right|\dd z+|x|^{\alpha-1}\int_{I_n\cup \bar{A}_n}|v_t^1(z)-v_t^2(z)|\dd z\\
&\quad +\left|v_t^1(0)-v_t^2(0)\right|\int_{I_n\cup \bar{A}_n} \left(\frac{1}{|x-z|}+\frac{1}{|z|}+\frac{|x|}{|z|^{2}}\right)|z|^\alpha\dd z.\label{est q}\end{aligned}
\end{align}
%
%
By e.g.\ Young's convolution inequality, we see that \begin{align*}
\mel\norm{|x|^{\alpha-2+\frac{\beta}{2}}\int_{I_n\cup \bar{A}_n} \frac{1}{|x-z|}|v_t^1(z)-v_t^2(z)|\dd z}_{L_x^2(A_n)}\lesssim 2^{-(\alpha-2+\beta/2)n}\norm{v_t^1-v_t^2}_{L^2(I_n\cup \bar{A}_n)}\\
&\lesssim 2^{-(\alpha-2+\frac{\beta}{2})n}\norm{v_t^1-v_t^2}_{L^2(B_2(0))}\\
&\lesssim 2^{-(\alpha-2-\frac{\beta}{2})n}\sqrt{E_t}
\end{align*}
where the last step used Lemma \ref{l2 lemma} and \eqref{D m}.
By summing up the square of this over all dyadic annuli we see that \begin{align}
\norm{\sum_{m\geq 1}\mathds{1}_{A_m}(x)|x|^{\alpha-2+\frac{\beta}{2}}\int_{I_n\cup \bar{A}_n} \frac{1}{|x-z|}|v_t^1(z)-v_t^2(z)|\dd z}_{L^2(\R^2)}\lesssim \sqrt{E_t}\label{1st q}
\end{align}

\noindent Similarly, we may estimate the second integral in \eqref{est q} by \begin{align}
|x|^{\alpha-1}\int_{I_n\cup \bar{A}_n}|v_t^1(z)-v_t^2(z)|\dd z\lesssim |x|^{\alpha-1}\sqrt{E_t}\label{2nd q}
\end{align}
by Lemma \ref{l2 lemma} and \eqref{D m}, which has the desired weighted $L^2$ bound by integrating in $x$.

The third integral in \eqref{est q} can be estimated by $E_t$ through e.g.\ \eqref{vt 0} as \begin{align}\begin{aligned}\label{3rd q}
   \mel|v_t^1(0)-v_t^2(0)|\int_{I_n\cup \bar{A}_n} \left(\frac{1}{|x-z|}+\frac{1}{|z|}+\frac{|x|}{|z|^{2}}\right)|z|^\alpha\dd z\lesssim \sqrt{E_t} \int_{I_n\cup \bar{A}_n} \left(\frac{1}{|x-z|}+\frac{1}{|z|}+\frac{|x|}{|z|^{2}}\right)|z|^\alpha\dd z\\
   &\lesssim \sqrt{E_t}|x|^{\alpha+1},
\end{aligned}\end{align}
since $I_n\cup \bar{A}_n$ is a ball of radius $\approx |x|$.
This has the desired $L^2$-bound after integrating in $x$.

In the integral over the outer region $O_n$ we note that $|\varpi_t^2(z)-\tilde{\eta}(z)\varpi_t^2(0)|\lesssim |z|^{\alpha}$ by \eqref{ass later t} and because $\tilde{\eta}=1$ near $0$, and therefore, for $x\in A_n$, we have  \begin{align*}
\mel\left|\int_{O_n}\nabla_z\left(\log|x-z|-\log|z|+\frac{x\cdot z}{|z|^2}\right)\left(v_t^1(z)-v_t^1(0)-v_t^2(z)+v_t^2(0)\right)(\varpi_t^2(z)-\tilde{\eta}(z)\varpi_t^2(0))\dd z\right|\\
&\lesssim \int_{O_n} \left|\frac{(x-z)}{|x-z|^2}-\frac{z}{|z|^2}+\frac{x|z|^2-(x\cdot z)z}{|z|^4}\right||v_t^1(z)-v_t^2(z)||z|^\alpha\dd z\\
&\qquad+|v_t^1(0)-v_t^2(0)|\int_{O_n} \left|\frac{(x-z)}{|x-z|^2}-\frac{z}{|z|^2}+\frac{x|z|^2-(x\cdot z)z}{|z|^4}\right||z|^\alpha\dd z.
\end{align*}
Since $|\frac{(x-z)}{|x-z|^2}-\frac{z}{|z|^2}+\frac{x|z|^2-(x\cdot z)z}{|z|^4}|\lesssim |x||z|^{-2}$ for $|z|\geq 2|x|$, we see that for $x\in A_n$ we have  \begin{align}\begin{aligned}
    \mel\left|\int_{O_n}\nabla_z(\log|x-z|-\log|z|+\frac{z\cdot x}{|z|^2})\left(v_t^1(z)-v_t^1(0)-v_t^2(z)+v_t^2(0)\right)(\varpi_t^2(z)-\varpi_t^2(0))\dd z\right|\\
    &\lesssim |x|\int_{O_n} |v_t^1(z)-v_t^2(z)|\dd z+|v_t^1(0)-v_t^2(0)|\int_{O_n} |x|\dd z\lesssim |x|\sqrt{E_t}\label{4th q}
\end{aligned}\end{align}
where the last step uses Lemma \ref{l2 lemma} and \eqref{D m} for the first integral and \eqref{vt 0} for the second difference, as well as $|x|, |z|\lesssim 1$.

Putting \eqref{1st q}, \eqref{2nd q}, \eqref{3rd q} and \eqref{4th q} together, we see that \begin{align}
    \norm{|\cdot|^{\beta/2-2}Q}_{L^2(B_\frac{1}{2}(0))}\lesssim \sqrt{E_t}.\label{fin est q}
\end{align}

\subsubsection{The term $R$.} 
We partially integrate to see that \begin{align*}
R(x)=|\varpi_0(0)|\left|\int_{\R^2}(\log|x-z|-\log|z|+\frac{x\cdot z}{|z|^2})\left(\bar u_t^1(z)-\bar u_t^2(z)\right)\nabla\tilde{\eta}(z)\dd z\right|.
\end{align*}
Note that by the fundamental theorem of calculus we have \begin{align*}
    \left|\log|x-z|-\log|z|+\frac{x\cdot z}{|z|^2}\right|\lesssim |x|^2
\end{align*}
for $|z|\in (1,2)$ and $|x|\leq \frac{1}{2}$, which in particular contains the support of $\nabla \tilde{\eta}$. Therefore, for $|x|\leq \frac{1}{2}$, we have \begin{align*}
    R(x)\lesssim \int_{\R^2} |x|^2\left|\bar u_t^1(z)-\bar u_t^2(z)\right||\nabla\tilde{\eta}(z)|\dd z\lesssim |x|^2\left(\norm{v_t^1-v_t^2}_{L^2(B_2(0)}+|v_t^1(0)-v_t^2(0)|\right).
\end{align*}
By Lemma \ref{l2 lemma}, \eqref{D m} and \eqref{vt 0} we therefore see that \begin{align*}
    R(x)\lesssim  |x|^2\sqrt{E_t},
\end{align*}
and hence also that \begin{align*}
\norm{|\cdot|^{\beta/2-2}R}_{L^2(B_{\frac{1}{2}}(0))}\lesssim \sqrt{E_t}.
\end{align*}
Combining this estimate and \eqref{fin est p'}, \eqref{fin est p''} and  \eqref{fin est q} again shows \eqref{deri dpsi1}.

\subsubsection{Proof of \eqref{deri dpsi2}.}
Note that by definition it holds that $\Psi_R^1(t,0)-\Psi_R^2(t,0)=0$ and \begin{align*}\nabla_x^\perp (\Psi_R^1(t,x)-\Psi_R^2(t,x))=v_t^1(x)-v_t^1(0)-v_t^2(x)-v_t^2(0).\end{align*}
We may hence use Hardy's inequality \cite[Formula (1.3.3)]{mayza} to estimate \begin{align*}
    \int_{B_\frac{1}{2}(0)}\left|\Psi_R^1(t,x)-\Psi_R^2(t,x)\right|^2|x|^{\beta-4}\dx\lesssim \int_{B_\frac{1}{2}(0)}\left|v_t^1(x)-v_t^1(0)-v_t^2(x)-v_t^2(0)\right|^2|x|^{\beta-2}\dx\lesssim E_t,
\end{align*}
where the last step used Lemma \ref{weighted l2}.\hfill\qedsymbol

\subsection{Proof of Lemma \ref{det d}}
Let us first note that \eqref{det d 4} is a direct consequence of \eqref{det d 1}-\eqref{det d 3}, the identity \eqref{psir id}, \eqref{linfty psir} and the discrete product rule.

Indeed, using these, we have \begin{align*}
\mel \left|(\de_t d^1)(t,x)-(\de_t d^1)(t,y)\right|=\left||x|e^{2\pi \Psi_R^1(t,x)}\de_t\Psi_R^1(t,x)-|y|e^{2\pi \Psi_R^1(t,y)}\de_t\Psi_R^1(t,y)\right|\\
&\lesssim \left||x|-|y|\right|\left|e^{2\pi \Psi_R^1(t,x)}\de_t\Psi_R^1(t,x)\right|+|y|\left|e^{2\pi \Psi_R^1(t,x)}-e^{2\pi \Psi_R^1(t,y)}\right|\left|\de_t\Psi_R^1(t,x)\right|\\
&\quad+|y|\left|\de_t\Psi_R^1(t,x)-\de_t\Psi_R^1(t,y)\right|\\
&\lesssim |x|^2|x-y|\log\left(2+\frac{1}{|x-y|}\right),
\end{align*}
which also used the assumption that $|x|\approx |y|$.

To prove the estimates \eqref{det d 1}-\eqref{d est 3}, we fix an $x\in A_n$ and again use the regions $I_n$, $\bar{A}_n$ and $O_n$, as defined in \eqref{def regions}.

\subsubsection{The estimate \eqref{det d 1}} 
We use the explicit form of the Biot-Savart law, the governing equations and partial integration to see that \begin{align}\begin{aligned}
\mel 2\pi \dfrac\Psi_R^1(t,x)=\dfrac \int_{\R^2} \left(\log|x-z|-\log|z|+\frac{z\cdot x}{|z|^2}\right)\varpi_t^1(z)\dd z\\
&=\int_{B_{S_2}(0)} \bar u_t^1(z)\cdot\nabla_z\left(\log|x-z|-\log|z|+\frac{z\cdot x}{|z|^2}\right)\left(\varpi_t^1(z)-\tilde{\eta}(z)\varpi_t^1(0)\right)\dd z\\
&\quad -\int_{\R^2} \left(\log|x-z|-\log|z|+\frac{z\cdot x}{|z|^2}\right)\bar u_t^1(z)\cdot\nabla_z\tilde{\eta}(z)\varpi_t^1(0)\dd z
\label{det psir1}\end{aligned}\end{align}
here $\tilde{\eta}$ is again a smooth, radially symmetric and nonnegative cutoff function, not depending on the other quantities, which equals $1$ on $B_1(0)$ and which is supported in $B_2(0)$.

The second integral here is trivially $\lesssim |x|^2$ since $|\log|x-z|-\log|z|+\frac{z\cdot x}{|z|^2}|\lesssim |x|^2$ and $|\bar u_t^1(z)|\lesssim 1$ for $z\in \supp \nabla \tilde{\eta}$ by \eqref{uni bd u} and the assumption that $|x|\leq S_1\leq \frac{1}{2}$.

We therefore only need to estimate the first integral in \eqref{det psir1}.
Over the region $I_n$, we estimate all terms directly with the triangle inequality and, using that the point vortex velocity is orthogonal to $z$ and that $\tilde{\eta}=1$ here, we see that \begin{align}
\mel\left|\int_{I_n} \bar u_t^1(z)\cdot\nabla_z\left(\log|x-z|-\log|z|+\frac{z\cdot x}{|z|^2}\right)\left(\varpi_t^1(z)-\tilde{\eta}(z)\varpi_t^1(0)\right)\dd z\right|\nonumber\\
&=\biggl|\int_{I_n}\left( \frac{(x-z)\cdot \bar u_t^1(z)}{|x-z|^2}-\frac{z\cdot (v_t^1(z)-v_t(0))}{|z|^2}+\frac{\bar u_t^1(z)\cdot x}{|z|^2}-\frac{(z\cdot x)(v_t^1(z)-v_t(0))\cdot z}{2|z|^4}\right)\nonumber\\
&\quad\times \left(\varpi_t^1(z)-\varpi_t^1(0)\right)\dd z\biggr|\nonumber\\
&\lesssim \int_{I_n} \left(|x|^{-1}|\bar u_t^1(z)|+|v_t^1(z)-v_t^1(0)||z|^{-1}+|\bar u_t^1(z)||x||z|^{-2}+|v_t^1(z)-v_t^1(0)||x||z|^{-2}\right)|z|^\alpha\dd z\nonumber\\
&\lesssim \int_{I_n} |x|^{-1}|z|^{\alpha-1}+|z|^{\alpha}+|x||z|^{\alpha-3}\dd z\lesssim |x|^2\label{in}
\end{align} 
here we have also used that $|x-z|\approx |x|$ on $I_n$ and the estimates \eqref{est v} and \eqref{uni bd u} on the velocity, as well as \eqref{ass later t} and that $I_n$ is a ball of radius $\approx |x|$ by definition.

Over the region $\bar{A}_n$, we directly estimate everything with the triangle inequality to see that\begin{align}
\mel\left|\int_{\bar{A}_n}\nabla_z \left(\log|x-z|-\log|z|+\frac{x\cdot z}{|z|^2}\right)\cdot \bar u_t^1(z)(\varpi_t^1(z)-\tilde{\eta}(z)\varpi_t^1(0))\dd z\right|\nonumber\\
&\lesssim \int_{\bar{A}_n}\left(\frac{1}{|x-z|}+\frac{1}{|z|}+\frac{|x|}{|z|^2}\right)|\bar u_t^1(z)||z|^\alpha\dd z\nonumber\\
&\lesssim \int_{\bar{A}_n} \frac{|x|^{\alpha-1}}{|x-z|}+|x|^{\alpha-2}\dd z\nonumber\\
&\lesssim |x|^2+|x|\int_{B_{10|x|}(0)}\frac{1}{|z|}\dd z\lesssim |x|^2\label{an}
\end{align}
by the estimate \eqref{uni bd u} on $\bar u_t^1$ and the fact that $|x|\approx |z|$ here.

Over the third region $O_n$, we use the elementary estimate \begin{align*}
\left|\nabla_z\left(\log|x-z|-\log|z|+\frac{z\cdot x}{|z|^2}\right)\right|\lesssim |x|^2|z|^{-3}, 
\end{align*}
which holds for all $z$ and $x$ with $|z|\geq 2|x|$ (so in particular for all $z$ in $O_n$) and can be easily shown with the fundamental theorem of calculus.
Together with \eqref{uni bd u} and \eqref{ass later t}, this yields that \begin{align}
    \left|\int_{O_n}\bar u_t^1(z)\cdot\nabla_z\left(\log|x-z|-\log|z|+\frac{z\cdot x}{|z|^2}\right)\left(\varpi_t^1(z)-\tilde{\eta}(z)\varpi_t^1(0)\right)\dd z\right|\lesssim \int_{O_n}|x|^2 |z|^{\alpha-4}\dd z \lesssim |x|^2.\label{on}
\end{align}
Putting \eqref{in}, \eqref{an} and \eqref{on} together shows \eqref{det d 1}.

\subsubsection{The estimate \eqref{det d 2}}
We may rewrite \begin{align*}
\mel 2\pi(\Psi_R^1(t,x)-\Psi_R^1(t,y))=\int_{\R^2} \left(\log|x-z|-\log|y-z|+\frac{z\cdot (x-y)}{|z|^2}\right)\varpi_t^1(z)\dd z\\
&=\int_{B_{S_2}(0)} \left(\log|x-z|-\log|y-z|+\frac{z\cdot (x-y)}{|z|^2}\right)(\varpi_t^1(z)-\tilde{\eta}(z)\varpi_t^1(0))\dd z\\
&\quad+\int_{\R^2} \left(\log|x-z|-\log|y-z|\right)\tilde{\eta}(z)\varpi_t^1(0))\dd z
\end{align*}
where the last step used the antisymmetry of $\frac{z\cdot (x-y)}{|z|^2}$.

We first analyze the second integral. It can be estimated by $|x-y|\max(|x|,|y|)$ since $\tilde{\eta}*\log|\cdot|$ is a $C^2$ function near $0$ with a critical point at $0$ (since it is radially symmetric).

We split the first integral into the three regions $I_n,\bar{A}_n, O_n$ again. Let us note that by the assumption on $y$, we have $y\in \bar{A}_n$ and $\dist(y,\de\bar{A}_n)\gtrsim |x|$.

We have \begin{align}
    \mel\left|\int_{I_n} \left(\log|x-z|-\log|y-z|+\frac{z\cdot (x-y)}{|z|^2}\right)(\varpi_t^1(z)-\tilde{\eta}\varpi_t^1(0))\dd z\right|\lesssim\int_{I_n} \frac{|x-y|}{|z|}|z|^\alpha\dd z\nonumber\\
    &\lesssim |x-y|\max(|x|,|y|)^3\label{in2}
\end{align}
which used that $|x-z|\approx |y-z|\approx |x|\gtrsim |z|$ on this region, as well as \eqref{ass later t}.

On the region $\bar{A}_n$ we use that $|\log|x-z|-\log|y-z||\lesssim |x-y||z|^{-1}$ for $x,y,z\in \bar{A}_n$, as one can see by estimating the Lipschitz constant in $\bar{A}_n$ with the supremum of the derivative.

This yields by \eqref{ass later t} that \begin{align}
   \mel \left|\int_{\bar{A}_n} \left(\log|x-z|-\log|y-z|+\frac{z\cdot (x-y)}{|z|^2}\right)\left(\varpi_t^1(z)-\tilde{\eta}(z)\varpi_t^1(0)\right)\dd z\right|\nonumber\\
    &\lesssim \int_{\bar{A}_n} |x-y||z|^{\alpha-1}\dd z\lesssim |x-y|\max(|x|,|y|)^3.\label{an2}
\end{align}

\noindent We move on to the region $O_n$. Here we estimate the first bracket with the fundamental theorem of calculus as \begin{align}\begin{aligned}
\mel \left|\log|x-z|-\log|y-z|+\frac{z\cdot (x-y)}{|z|^2}\right|=\left|\int_0^1\left(\frac{(1-s)x+sy-z}{|(1-s)x+sy-z|^2}+\frac{z}{|z|^2}\right)\cdot(x-y) \dd s\right|\\
&\lesssim (|x|+|y|)|z|^{-2}|x-y|.\label{fc}
\end{aligned}\end{align}
This yields that \begin{align}
&\left|\int_{O_n} \left(\log|x-z|-\log|y-z|+\frac{z\cdot (x-y)}{|z|^2}\right)(\varpi_t^1(z)-\tilde{\eta}(z)\varpi_t^1(0))\dd z\right|\nonumber\\
&\lesssim  (|x|+|y|)|x-y|\int_{O_n}|z|^{\alpha-2}\dd z\lesssim  \max(|x|,|y|)|x-y|.\label{on2}
\end{align}
Putting \eqref{in2}, \eqref{an2} and \eqref{on2} together yields \eqref{det d 2}.

\subsubsection{The estimate \eqref{det d 3}}

We use the governing equations and partial integration to rewrite \begin{align*}
\mel 2\pi\left(\de_t \Psi_R^1(t,x)-\de_t \Psi_R^1(t,y)\right)\\
&=\int_{B_{S_2}(0)} \bar u_t^1(z)\cdot\nabla_z\left(\log|x-z|-\log|y-z|+\frac{z\cdot (x-y)}{|z|^2}\right) (\varpi_t^1(z)-\tilde{\eta}(z)\varpi_t^1(0))\dd z\\
&\quad+\int_{B_{S_2}(0)} \left(\log|x-z|-\log|y-z|+\frac{z\cdot (x-y)}{|z|^2}\right) \bar u_t^1(z)\cdot\nabla\tilde{\eta}(z)\varpi_t^1(0)\dd z.
\end{align*}
The second integral is again bounded by $|x-y|\max(|x|,|y|)$ since the first bracket has this bound on $\supp\nabla \tilde{\eta}$.

This first integral is split into the three regions $I_n, \bar{A}_n, O_n$ yet again. We have \begin{align}
 \mel   \left|\int_{I_n} \bar u_t^1(z)\cdot\nabla_z\left(\log|x-z|-\log|y-z|+\frac{z\cdot (x-y)}{|z|^2}\right) \left(\varpi_t^1(z)-\tilde{\eta}(z)\varpi_t^1(0)\right)\dd z\right|\nonumber\\
 &\lesssim   \int_{I_n} \left|\frac{x-z}{|x-z|^2}-\frac{y-z}{|y-z|^2}+\nabla_z\frac{z\cdot (x-y)}{|z|^2}\right| |z|^{\alpha-1}\dd z\nonumber\\
&    \lesssim \int_{I_n} |x-y||z|^{\alpha-3}\dd z\nonumber\\
&\lesssim |x-y|\max(|x|,|y|)\nonumber
\end{align}
by \eqref{uni bd u}, \eqref{ass later t} and because $|x-z|\approx |y-z|\approx |x|\gtrsim |z|$ on this region.

On the region $\bar{A}_n$, we  split the bracket to see that \begin{align*}
     \mel\left|\int_{\bar{A}_n} \nabla_z\left(\log|x-z|-\log|y-z|+\frac{z\cdot (x-y)}{|z|^2}\right)\bar u_t^1(z)(\varpi_t^1(z)-\tilde{\eta}(z)\varpi_t^1(0))\dd z\right|\\
     &\lesssim |h(x)-h(y)|+|x-y|\int_{\bar{A}_n} \frac{1}{|z|^2}|\bar u_t^1(z)|\left|\varpi_t^1(z)-\varpi_t^1(0)\right|\dd z\\
     &\lesssim |h(x)-h(y)|+|x-y|\int_{\bar{A}_n} |z|^{\alpha-3}\dd z,
\end{align*}
where $h$ is a shorthand for $((\bar u_t^1(\cdot)(\varpi_t^1(\cdot)-\varpi_t^1(0))\mathds{1}_{\bar{A}_n})*\log|\cdot|$. To estimate this, we note that, thanks to Lemma \ref{ll lemma app}, $h$ is log-Lipschitz with modulus $\lesssim \norm{\bar u_t^1(\cdot)(\varpi_t^1(\cdot)-\varpi_t^1(0))\mathds{1}_{\bar{A}_n}}_{L^\infty}$. This norm in turn is $\lesssim \max(|x|,|y|)$ by \eqref{ass later t} and \eqref{uni bd u}. Hence we see that \begin{align*}
    \mel\left|\int_{\bar{A}_n} \nabla\left(\log|x-z|-\log|y-z|+\frac{z\cdot (x-y)}{|z|^2}\right)\bar u_t^1(z)(\varpi_t^1(z)-\varpi_t^1(0))\dd z\right|\\
    &\lesssim \max(|x|,|y|)|x-y|\log\left(2+\frac{1}{|x-y|}\right).
\end{align*}
On the region $O_n$, we estimate with the fundamental theorem of calculus, similarly as in \eqref{fc} above \begin{align*}
    \left|\nabla_z\left(\log|x-z|-\log|y-z|+\frac{z\cdot (x-y)}{|z|^2}\right)\right|\lesssim (|x|+|y|)|z|^{-3}|x-y|,
\end{align*}
which holds whenever $|z|\geq 2\max(|x|,|y|)$, as it is the case on $O_n$.

This yields that \begin{align*}
\mel\left|\int_{O_n} \nabla_z\left(\log|x-z|-\log|y-z|+\frac{z\cdot (x-y)}{|z|^2}\right)\bar u_t^1(z)(\varpi_t^1(z)-\tilde{\eta}(z)\varpi_t^1(0))\dd z\right|\\
&\lesssim (|x|+|y|)|x-y|\int_{O_n}|z|^{\alpha-4}\dd z\\
&\lesssim  \max(|x|,|y|)|x-y|.
\end{align*}
Combining these estimates shows \eqref{det d 3}.

\hfill\qedsymbol

\subsection{Proof of Lemma \ref{weighted l2}}
We begin with \eqref{weighted l2 est}.
We again use the dyadic annuli $A_n$ and balls $I_n$ as defined in \eqref{def regions}, and split \begin{align*}
   \mel \int_{B_1(0)} |x|^{\beta-2}|v_t^1(x)-v_t^2(x)|^2\dx\\
   &\leq \sum_{n=0}^\infty \int_{A_n} |x|^{\beta-2}\bigl|\BS\Bigr[(\mathds{1}_{I_{n-4}}(\varpi_0-\varpi_0(0))\circ (\mathcal{X}_t^1)^{-1}-(\mathds{1}_{I_{n-4}}(\varpi_0-\varpi_0(0)))\circ (\mathcal{X}_t^2)^{-1}\Bigr]\Bigr|^2\dx\\
   &\quad+\int_{A_n} |x|^{\beta-2}\Bigl|\BS\Bigl[(\mathds{1}_{B_{\frac{1}{2}S_2}(0)\backslash I_{n-4}}(\varpi_0-\varpi_0(0)))\circ (\mathcal{X}_t^1)^{-1}\\
   &\qquad-(\mathds{1}_{B_{\frac{1}{2}S_2}(0)\backslash I_{n-4}}(\varpi_0-\varpi_0(0)))\circ (\mathcal{X}_t^2)^{-1}\Bigr]\Bigr|^2\dx\\
   &\quad+\int_{B_{1}(0)} |x|^{\beta-2}\bigl|\BS\bigr[(\mathds{1}_{B_{\frac{1}{2}S_2}(0)}\varpi_0(0))\circ (\mathcal{X}_t^1)^{-1}-(\mathds{1}_{B_{\frac{1}{2}S_2}(0)}\varpi_0(0))\circ (\mathcal{X}_t^2)^{-1}\bigl]\bigr|^2\dx\\
    &=:I+II+III,
\end{align*}
where we have used that $\varpi_0$ is supported in $B_{\frac{1}{2}S_2}(0)$ by definition. We estimate each term individually.

For the term $I$, we use Lemma \ref{l2 lemma} and that $\norm{\mathds{1}_{I_{n-4}}(\varpi_0-\varpi_0(0))}_{L^\infty}\lesssim 2^{-n\alpha}$ by \eqref{ass later t}, yielding the bound \begin{align*}
    I\lesssim \sum_{n=1}^\infty 2^{2n(2-\alpha-\beta)}\int_{B_2(0)} \left|(\mathcal{X}_t^1)(x)-(\mathcal{X}_t^2)(x)\right|^2\dx\lesssim \int_{B_2(0)} \left|(\mathcal{X}_t^1)(x)-(\mathcal{X}_t^2)(x)\right|^2\dx
\end{align*}
which also used that $|x|^{\beta-2}\approx 2^{-(2-\beta)n}$ on each $A_n$ and that the preimage of $B_1(0)$ is contained in $B_2(0)$ by \eqref{mod1}.

This is bounded by $E_t$,  thanks to \eqref{D m}.

The term $II$ on the other hand uses the explicit form of the Biot-Savart law.
It holds that\begin{align*}
   \mel\left|\BS\left[(\mathds{1}_{B_{\frac{1}{2}S_2(0)}\backslash I_{n-4}}(\varpi_0-\varpi_0(0)))\circ (\mathcal{X}_t^1)^{-1}-\mathds{1}_{B_{\frac{1}{2}S_2(0)}\backslash I_{n-4}}(\varpi_0-\varpi_0(0)))\circ (\mathcal{X}_t^2)^{-1}\right]\right|(x)\\
   &\lesssim \int_{B_{\frac{1}{2}S_2(0)}\backslash I_{n-4}} \left|\frac{x-\mathcal{X}_t^1(z)}{|x-\mathcal{X}_t^1(z)|^2}-\frac{x-\mathcal{X}_t^2(z)}{|x-\mathcal{X}_t^2(z)|^2}\right| \left| \varpi_0(z)-\varpi_0(0)\right|  \dd z\\
   &\lesssim \int_{B_{S_2}(0)\backslash I_{n-4}} \frac{|\mathcal{X}_t^1(z)-\mathcal{X}_t^2(z)|}{|z|^2}|z|^\alpha \dd z, 
\end{align*}
where we have used that for $z\in \R^2\backslash I_{n-4}$, it holds that $|x-\mathcal{X}_t^1(z)|\approx |x-\mathcal{X}_t^2(z)|\approx |z|$ by \eqref{mod1}
and also used \eqref{ass later t}. By Cauchy-Schwarz and \eqref{D m}, this can now be estimated by \begin{align*}
    \int_{B_{S_2}(0)\backslash I_{n-4}} \frac{|\mathcal{X}_t^1(z)-\mathcal{X}_t^2(z)|}{|z|^2}|z|^\alpha \dd z  \lesssim \left(\int_{B_{S_2}(0)}k_t(z)^2 z^{\beta-2}\dd z \right)^\frac{1}{2}\left(\int_{B_{S_2}(0)}|z|^{2\alpha-2+\beta}\dd z\right)^\frac{1}{2}\lesssim \sqrt{E_t}.
\end{align*}
Using this to estimate $II$, we see that \begin{align*}
II\lesssim \sum_{n=1}^\infty E_t \int_{A_n} |x|^{\beta-2}\dd x\lesssim E_t
\end{align*}
For the term $III$, we note that $((\mathds{1}_{B_{\frac{1}{2}S_2}(0)}\varpi_0(0))\circ (\mathcal{X}_t^1)^{-1}-(\mathds{1}_{B_{\frac{1}{2}S_2}(0)}\varpi_0(0))\circ (\mathcal{X}_t^2)^{-1})(z)=0$, unless $(\mathcal{X}_t^1)^{-1}(z)$ lies in $B_{\frac{1}{2}S_2}(0)$ and $(\mathcal{X}_t^2)^{-1}(z)$ does not (or vice versa). By \eqref{mod1}, this can only happen for  $x\in B_{\frac{3}{4}S_2}(0)\backslash B_{\frac{1}{4}S_2}(0)$. 
We can therefore take a $C^1$-function $h_x$ which agrees with $\frac{x-z}{|x-z|^2}$ outside of $B_{\frac{1}{4}S_2}(0)$ and whose $C^1$-norm is bounded uniformly in $x\in B_1(0)$, 
and write 
\begin{align*}
\mel \left|\BS\left[(\mathds{1}_{B_{\frac{1}{2}S_2}(0)}\varpi_0(0))\circ (\mathcal{X}_t^1)^{-1}-(\mathds{1}_{B_{\frac{1}{2}S_2}(0)}\varpi_0(0))\circ (\mathcal{X}_t^2)^{-1}\right](x)\right|\\
& =\left|\int_{\R^2}h_x(z) \left(\mathds{1}_{B_{\frac{1}{2}S_2}(0)}\varpi_0(0))\circ (\mathcal{X}_t^1)^{-1}-(\mathds{1}_{B_{\frac{1}{2}S_2}(0)}\varpi_0(0))\circ (\mathcal{X}_t^2)^{-1}\right)(z)\dd z\right|\\
&\leq \int_{B_{S_2}(0)}\left|h_x(\mathcal{X}_t^1(z))-h_x(\mathcal{X}_t^2(z))\right||\varpi_0(0)| \dd z\\
&\lesssim \int_{B_{S_2}(0)} \left|\mathcal{X}_t^1(z)-\mathcal{X}_t^2(z)\right|\dd z\lesssim \sqrt{E_t}.
\end{align*}
Since $|x|^{\beta-2}$ is integrable, this immediately yields the desired bound on $III$.

\subsubsection{The estimate \eqref{vt 0}.}
We compute \begin{align}\begin{aligned}
  \mel 2\pi \left|v_t^1(0)-v_t^2(0)\right|=\left|\int_{\R^2} \frac{x^\perp}{|x^2|}\left(\varpi_t^1(x)-\varpi_t^2(x)\right)\dd x\right|\\
 & \leq \int_{\R^2} |\varpi_0(x)-\mathds{1}_{B_{\frac{1}{2}S_2}(0)}\varpi_0(0)|\left|\frac{\Chi_t^1(x)}{|\Chi_t^1(x)|^2}-\frac{\Chi_t^2(x)}{|\Chi_t^2(x)|^2}\right|\dd x\\
 &\quad +\left|\int_{\R^2}\frac{x^\perp}{|x|^2}\left(\mathds{1}_{B_{\frac{1}{2}S_2}(0)}\varpi_0(0)\circ (\mathcal{X}_t^1)^{-1}(x)-\mathds{1}_{B_{\frac{1}{2}S_2}(0)}\varpi_0(0)\circ (\mathcal{X}_t^2)^{-1}(x)\right)\right|\label{est vt0}
\end{aligned}\end{align}
We can estimate the first integral here as 
\begin{align*} 
\mel \int_{\R^2} \left|\varpi_0(x)-\mathds{1}_{B_{\frac{1}{2}S_2}(0)}\varpi_0(0)\right|\left|\frac{\Chi_t^1(x)}{|\Chi_t^1(x)|^2}-\frac{\Chi_t^2(x)}{|\Chi_t^2(x)|^2}\right|\dd x\\
&\lesssim\int_{\R^2}  \left|\varpi_0(x)-\mathds{1}_{B_{\frac{1}{2}S_2}(0)}\varpi_0(0)\right||x|^{-2}|\Chi_t^1(x)-\Chi_t^2(x)|\dd x\\
&\lesssim \int_{B_{S_2}(0)} |\Chi_t^1(x)-\Chi_t^2(x)||x|^{\alpha-2}\dd x\lesssim \sqrt{E_t}
\end{align*}
where we have used \eqref{mod1}, Cauchy-Schwarz and \eqref{D m}.

For the second integral in \eqref{est vt0}, we argue similarly as for above and use that by \eqref{mod1}, the first difference can only be non-zero if $|x|\geq \frac{1}{4}$ and that we can again replace $\frac{x}{|x|^2}$ by a $C^1$-function $h$ (not depending on any of the other quantities) which agrees with $\frac{x}{|x|^2}$ on this region to see that \begin{align*}
    \mel\left|\int_{\R^2}\frac{x^\perp}{|x|^2}\left(\mathds{1}_{B_{\frac{1}{2}S_2}(0)}\varpi_0(0)\circ (\mathcal{X}_t^1)^{-1}(x)-\mathds{1}_{B_{\frac{1}{2}S_2}(0)}\varpi_0(0)\circ (\mathcal{X}_t^2)^{-1}(x)\right)\dd x\right|\\
&=\left|\int_{B_{\frac{1}{2}S_2}(0)}h(\mathcal{X}_t^1(x))-h(\mathcal{X}_t^2(x))\dd x\right|\\
&\lesssim \norm{h}_{C^1}\int_{B_{\frac{1}{2}S_2}(0)}\left|\mathcal{X}_t^1(x))-\mathcal{X}_t^2(x)\right|\dd x\lesssim \sqrt{E_t},
\end{align*}
where the last step uses Cauchy-Schwarz and \eqref{D m}. Combining the estimates yields the statement.\hfill\qedsymbol

\textbf{Statement on competing interests:} The author declares that he has no competing interests.

\textbf{Statement on AI usage:} Generative AI (ChatGPT 5.6 Luna and Claude Sonnet 5) was used solely for proofreading.
No part of this work was written by AI.
All scientific content was developed by the author alone, who takes full responsibility for this work.

\appendix
\section{Classical estimates}

\begin{lemma}\label{l2 lemma}
Let $\Phi_1,\Phi_2:\R^2\rightarrow \R^2$ be volume-preserving, invertible and measureable. Let $f\in L^1(\R^2)\cap L^\infty(\R^2)$, then it holds that \begin{align*}
    \norm{\BS[f\circ \Phi_1]-\BS[f\circ \Phi_2]}_{L^2(\R^2)}\lesssim \norm{f}_{L^\infty(\R^2)}\norm{\Phi_1^{-1}-\Phi_2^{-1}}_{L^2(\supp f)}.
\end{align*}
\end{lemma}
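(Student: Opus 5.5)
This is the classical Loeper-type stability estimate for the Biot--Savart law, and the plan is a duality argument. Write $g_i:=f\circ\Phi_i$ and $w:=\BS[g_1]-\BS[g_2]$. First reduce to smooth $f$: both sides are stable under approximating $f$ in $L^1$ by functions with $\norm{f_k}_{L^\infty}\le\norm{f}_{L^\infty}$ and $\supp f_k\subset \supp f$ up to null sets. Nothing is lost here because $\Phi_i$ are volume preserving, so $g_i$ are bounded in $L^1\cap L^\infty$ and the velocity fields converge locally in $L^2$.

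Next, fix a test vector field $\varphi\in C_c^\infty(\R^2;\R^2)$. Since $\BS=\nabla^\perp\Delta^{-1}$, we have
\begin{align*}
\int_{\R^2} w\cdot\varphi\dx=-\int_{\R^2}(g_1-g_2)\,\psi\dx,\qquad \psi:=\Delta^{-1}\operatorname{curl}\varphi .
\end{align*}
Here $\nabla\psi$ is a Calderón--Zygmund transform of $\varphi$, so $\norm{\nabla\psi}_{L^2}\lesssim\norm{\varphi}_{L^2}$. Changing variables with the volume-preserving maps $\Phi_i^{-1}$ gives
\begin{align*}
\int (g_1-g_2)\psi\dx=\int f(y)\bigl(\psi(\Phi_1^{-1}(y))-\psi(\Phi_2^{-1}(y))\bigr)\dd y .
\end{align*}
Write the increment along the segment $\gamma_s(y)=(1-s)\Phi_1^{-1}(y)+s\Phi_2^{-1}(y)$:
\begin{align*}
\psi(\Phi_1^{-1}(y))-\psi(\Phi_2^{-1}(y))=\int_0^1\nabla\psi(\gamma_s(y))\cdot(\Phi_1^{-1}-\Phi_2^{-1})(y)\dd s .
\end{align*}
By Cauchy--Schwarz on $\supp f$, the right-hand side is bounded by $\norm{f}_{L^\infty}\norm{\Phi_1^{-1}-\Phi_2^{-1}}_{L^2(\supp f)}\int_0^1\norm{\nabla\psi\circ\gamma_s}_{L^2(\supp f)}\dd s$.

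The main obstacle is the last factor: the interpolated maps $\gamma_s$ are not volume preserving, so $\norm{\nabla\psi\circ\gamma_s}_{L^2}$ cannot simply be replaced by $\norm{\nabla\psi}_{L^2}$. I would follow Loeper's standard device. Instead of the pointwise segment, interpolate the densities: set $\rho_s:=(\gamma_s)_\#(\mathds 1_{\supp f}\,\dd y)$. These measures have densities bounded by $1$, by displacement convexity of the $L^\infty$ bound in the Wasserstein geodesic, or by choosing the optimal coupling between $\mathds 1_{\supp f}\circ\Phi_1$ and $\mathds 1_{\supp f}\circ\Phi_2$, whose transport cost is at most the coupling $\Phi_1^{-1}\leftrightarrow\Phi_2^{-1}$. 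Then $\int|\nabla\psi|^2\circ\gamma_s=\int|\nabla\psi|^2\rho_s\le\norm{\nabla\psi}_{L^2}^2$, which yields
\begin{align*}
\Bigl|\int w\cdot\varphi\Bigr|\lesssim\norm{f}_{L^\infty}\norm{\Phi_1^{-1}-\Phi_2^{-1}}_{L^2(\supp f)}\norm{\varphi}_{L^2}.
\end{align*}
In practice I would first try to carry this out for $f\ge0$ (splitting $f=f^+-f^-$ otherwise), and use the $W_2$ bound for the Loeper estimate: for densities bounded by $M$, $\norm{\nabla\Delta^{-1}(\mu-\nu)}_{L^2}\le \sqrt{M}\,W_2(\mu,\nu)$, with $W_2$ controlled by the coupling through $\Phi_1^{-1},\Phi_2^{-1}$. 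Taking the supremum over $\varphi$ finishes the proof.
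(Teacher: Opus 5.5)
Your last paragraph gives a correct proof, but the chain displayed before it contains a false step and should be dropped. The claim that $\rho_s=(\gamma_s)_\#(\mathds{1}_{\supp f}\dd y)$ has density at most $1$ ``by displacement convexity'' is false. $L^\infty$ bounds are displacement convex along the McCann interpolation of an \emph{optimal} plan, not along the interpolation of an arbitrary coupling. Your $\gamma_s$ interpolates along the coupling $y\mapsto(\Phi_1^{-1}(y),\Phi_2^{-1}(y))$, whose segments can collide. Take $\Phi_1=\mathrm{id}$, $\Phi_2=-\mathrm{id}$ (rotation by $\pi$) and $f=\mathds{1}_{B_1(0)}$. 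Then $\gamma_{1/2}\equiv 0$, so $\rho_{1/2}=\pi\delta_0$, and $\int|\nabla\psi|^2\dd\rho_{1/2}$ is not controlled by $\norm{\nabla\psi}_{L^2}^2$.

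Switching to the optimal plan between $\mathds{1}_{\supp f}\circ\Phi_1$ and $\mathds{1}_{\supp f}\circ\Phi_2$ does not repair that computation. The integrand $f(y)\bigl(\psi(\Phi_1^{-1}(y))-\psi(\Phi_2^{-1}(y))\bigr)$ is tied to the $\Phi$-coupling, and an optimal plan for the indicator measures need not carry $f\circ\Phi_1$ onto $f\circ\Phi_2$. For signed $f$ there are no nonnegative measures to couple at all. The initial smoothing step is also superfluous: nothing uses smoothness of $f$, which is lost anyway after composing with merely measurable $\Phi_i$.

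The argument that does work is the one you sketch at the end. For $f\geq 0$, the measures $\mu_i:=f\circ\Phi_i$ have equal mass and densities at most $\norm{f}_{L^\infty}$. By volume preservation, $(\Phi_1^{-1},\Phi_2^{-1})_\#(f\dd y)$ is a coupling of them, so
$W_2(\mu_1,\mu_2)^2\leq\int f\,|\Phi_1^{-1}-\Phi_2^{-1}|^2\dd y\leq\norm{f}_{L^\infty}\norm{\Phi_1^{-1}-\Phi_2^{-1}}_{L^2(\supp f)}^2$.
Loeper's estimate $\norm{\nabla\Delta^{-1}(\mu_1-\mu_2)}_{L^2}\leq\norm{f}_{L^\infty}^{1/2}W_2(\mu_1,\mu_2)$ then gives the claim. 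Its proof is where the McCann interpolant and displacement convexity legitimately enter. Writing $f=f^+-f^-$ with $\supp f^\pm\subset\supp f$ costs a factor $2$.

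This is a genuinely different route from the paper. The paper also dualizes, but never interpolates. It applies the pointwise inequality $|h(x)-h(y)|\lesssim|x-y|\bigl(M|\nabla h|(x)+M|\nabla h|(y)\bigr)$ at $x=\Phi_1^{-1}(z)$, $y=\Phi_2^{-1}(z)$. The maximal function is then evaluated only at images of measure-preserving maps, so Cauchy--Schwarz, volume preservation and the $L^2$-boundedness of $M$ close the estimate. That handles signed $f$ in one stroke using only classical harmonic analysis. Your route trades this for Loeper's optimal-transport theorem and a positivity splitting, and in exchange gets an explicit constant.
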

\begin{proof}
We have that \begin{align*}
   \mel \norm{\BS[f\circ \Phi_1]-\BS[f\circ \Phi_2]}_{L^2(\R^2)}=\sup_{g\in C_c^\infty(\R^2),\norm{g}_{L^2}\leq 1}\scalar{g}{\BS[f\circ \Phi_1]-\BS[f\circ \Phi_2]}\\
   &=\sup_{g\in C_c^\infty(\R^2),\norm{g}_{L^2}\leq 1}\scalar{\BS[g]\circ \Phi_1^{-1}-\BS[g]\circ \Phi_2^{-1}}{f}.
\end{align*}
We now use that for any $H^1$ function $h$ it holds that $$|h(x)-h(y)|\lesssim |x-y|(M|\nabla h|(x)+M|\nabla h|(y)),$$ where $M$ is the classical Hardy-Littlewood maximal function, see e.g.\ \cite{aalto2009maximal} for more details. This yields that \begin{align*}
&\norm{\BS[f\circ \Phi_1]-\BS[f\circ \Phi_2]}_{L^2(\R^2)}\\
&\lesssim \sup_{g\in C_c^\infty(\R^2),\norm{g}_{L^2}\leq 1}\int_{\R^2} \left|\Phi_1^{-1}(x)- \Phi_2^{-1}(x)\right| (M|\nabla \BS[g]|(\Phi_1^{-1}(x))+M|\nabla \BS[g]|(\Phi_2^{-1}(x))) |f(x)|\dd x\\
&\lesssim \norm{f}_{L^\infty(\R^2)}\norm{\Phi_1^{-1}-\Phi_2^{-1}}_{L^2(\supp f)}\sup_{g\in C_c^\infty(\R^2),\norm{g}_{L^2}\leq 1}\norm{M|\nabla\BS[g]|}_{L^2(\R^2)}.
\end{align*}
The last supremum is $\lesssim 1$ since $M$ maps $L^2$ to $L^2$ boundedly, see e.g.\ \cite[Thm.\ 2.1.6]{grafakos2008classical} and $\norm{\BS[g]}_{\dot{H}^1}=\norm{g}_{L^2}$ by Plancherel.
\end{proof}

\begin{lemma}\label{ll lemma app}
Let $f\in L^1(\R^2)\cap L^\infty(\R^2)$ and let $x,y\in \R^2$, then it holds that \begin{align*}
    \left|\BS[f](x)-\BS[f](y)\right|\lesssim |x-y|\log\left(2+\frac{1}{|x-y|}\right)(\norm{f}_{L^\infty(\R^2)}+\norm{f}_{L^1(\R^2)}).
\end{align*}
\end{lemma}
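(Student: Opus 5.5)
The plan is to prove the classical log-Lipschitz estimate for the Biot--Savart law by writing
\begin{align*}
\BS[f](x)-\BS[f](y)=\frac{1}{2\pi}\int_{\R^2}\left(\frac{(x-z)^\perp}{|x-z|^2}-\frac{(y-z)^\perp}{|y-z|^2}\right)f(z)\dd z
\end{align*}
and splitting the $z$-integration according to the distance to the midpoint of $x$ and $y$. Set $r:=|x-y|$. If $r\geq 1$, the claim is trivial: Young's inequality (splitting the kernel into its part on $B_1(0)$, which is in $L^1$, and the rest, which is in $L^\infty$) gives $\norm{\BS[f]}_{L^\infty}\lesssim \norm{f}_{L^1}+\norm{f}_{L^\infty}$, so the difference is bounded by $2\norm{\BS[f]}_{L^\infty}\lesssim r\log(2+1/r)(\norm{f}_{L^1}+\norm{f}_{L^\infty})$. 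So I will assume $r<1$.

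For $r<1$ I will split $\R^2$ into three regions: the near region $N:=B_{2r}(x)$, the intermediate region $M:=B_2(x)\backslash B_{2r}(x)$, and the far region $F:=\R^2\backslash B_2(x)$. On $N$, the two kernels are estimated separately by the triangle inequality, using $\int_{B_{2r}(x)}|x-z|^{-1}\dd z\lesssim r$ and $\int_{B_{2r}(x)}|y-z|^{-1}\dd z\leq\int_{B_{3r}(y)}|y-z|^{-1}\dd z\lesssim r$. This yields a contribution $\lesssim r\norm{f}_{L^\infty}$.

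On $M\cup F$ I will use the mean value theorem. For $|z-x|\geq 2r$, every point $w$ on the segment $[x,y]$ satisfies $|w-z|\geq \frac12|x-z|$, and the kernel $K(w)=w^\perp/|w|^2$ has $|\nabla K(w)|\lesssim |w|^{-2}$. Hence the kernel difference is $\lesssim r|x-z|^{-2}$. On $M$ this gives
\begin{align*}
r\norm{f}_{L^\infty}\int_{2r\leq|x-z|\leq 2}|x-z|^{-2}\dd z\lesssim r\log(1/r)\norm{f}_{L^\infty},
\end{align*}
and on $F$ it gives $\lesssim r\norm{f}_{L^1}$, since $|x-z|^{-2}\leq \frac14$ there. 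Summing the three contributions produces the bound $r\log(2+1/r)(\norm{f}_{L^1}+\norm{f}_{L^\infty})$.

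The only point requiring care is the intermediate annulus. This is where the logarithm arises, and it is the reason the bound cannot be improved to a Lipschitz estimate; everything else is routine. I will also check that the geometric claim $|w-z|\geq\frac12|x-z|$ holds uniformly on the segment, which follows from $|w-x|\leq r\leq\frac12|x-z|$.
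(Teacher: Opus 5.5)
Your proposal is correct. It is essentially the classical argument the paper relies on: the paper gives no proof of its own but cites \cite[Lemma 8.1]{MajdaBertozzi}, which uses the same near/intermediate/far splitting with the mean value theorem on the kernel, the logarithm coming from the annulus $2r\leq|x-z|\leq 2$. One cosmetic point: you announce a splitting around the midpoint of $x$ and $y$ but then centre every region at $x$, which is what your estimates actually use and works as written.
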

See e.g.\ \cite[Lemma 8.1]{MajdaBertozzi} for a proof.

\bibliographystyle{abbrv}
  \bibliography{refsVW}

@book{marchioro2012mathematical,
  title={Mathematical theory of incompressible nonviscous fluids},
  author={Marchioro, Carlo and Pulvirenti, Mario},
  volume={96},
  year={2012},
  publisher={Springer Science \& Business Media}
}

@incollection{marchioro1991vortex,
  title={{On the vortex--wave system}},
  author={Marchioro, Carlo and Pulvirenti, Mario},
  booktitle={Mechanics, analysis and geometry: 200 years after Lagrange},
  pages={79--95},
  year={1991},
  publisher={Elsevier}
}

@article{glass2014motion,
  title={{On the motion of a small body immersed in a two-dimensional incompressible perfect fluid}},
  author={Glass, Olivier and Lacave, Christophe and Sueur, Franck},
  journal={Bulletin de la soci{\'e}t{\'e} math{\'e}matique de France},
  volume={142},
  number={3},
  pages={489--536},
  year={2014}
}

@article{lacave2009uniqueness,
  title={{Uniqueness for the vortex-wave system when the vorticity is constant near the point vortex}},
  author={Lacave, Christophe and Miot, Evelyne},
  journal={SIAM journal on mathematical analysis},
  volume={41},
  number={3},
  pages={1138--1163},
  year={2009},
  publisher={SIAM}
}

@article{flandoli2026early,
  title={{The early stage of the motion along the gradient of a concentrated vortex structure}},
  author={Flandoli, Franco and Palmieri, Matteo and Viviani, Milo},
  journal={Physica D: Nonlinear Phenomena},
  pages={135289},
  year={2026},
  publisher={Elsevier}
}

@article{glass2019dynamics,
    AUTHOR = {Glass, Olivier and Sueur, Franck},
     TITLE = {{Dynamics of several rigid bodies in a two-dimensional ideal fluid and convergence to vortex systems}},
   JOURNAL = {Bull. Soc. Math. France},
  FJOURNAL = {Bulletin de la Soci\'et\'e{} Math\'ematique de France},
    VOLUME = {154},
      YEAR = {2026},
    NUMBER = {1},
     PAGES = {1--106},
      ISSN = {0037-9484,2102-622X},
   MRCLASS = {35Q31 (74F10 76B10)},
  MRNUMBER = {5074192},
}

@article{nguyen2019inviscid,
  title={{The inviscid limit of Navier--Stokes equations for vortex-wave data on $\mathbb{R}^2$}},
  author={Nguyen, Toan T and Nguyen, Trinh T},
  journal={SIAM Journal on Mathematical Analysis},
  volume={51},
  number={3},
  pages={2575--2598},
  year={2019},
  publisher={SIAM}
}

@article{bjorland2011vortex,
  title={{The vortex-wave equation with a single vortex as the limit of the Euler equation}},
  author={Bjorland, Clayton},
  journal={Communications in mathematical physics},
  volume={305},
  number={1},
  pages={131--151},
  year={2011},
  publisher={Springer}
}

@article{cobb2025existence,
  title={{Existence and uniqueness for the SQG vortex-wave system when the vorticity is constant near the point-vortex}},
  author={Cobb, Dimitri and Donati, Martin and Godard-Cadillac, Ludovic},
  journal={SIAM Journal on Mathematical Analysis},
  volume={57},
  number={3},
  pages={2316--2362},
  year={2025},
  publisher={SIAM}
}

@article{lopes2011existence,
  title={{Existence of a Weak Solution in $L^p$ to the Vortex-Wave System}},
  author={Lopes Filho, Milton C and Miot, Evelyne and Nussenzveig Lopes, Helena J},
  journal={Journal of nonlinear science},
  volume={21},
  number={5},
  pages={685--703},
  year={2011},
  publisher={Springer}
}

@article{agrawal2025uniqueness,
  title={{Uniqueness of the 2D Euler equation on rough domains}},
  author={Agrawal, Siddhant and Nahmod, Andrea R},
  journal={Journal of the European Mathematical Society},
  year={2025},
  publisher={EMS Press}
}

@article{agrawal2020uniqueness,
doi = {10.1088/1361-6544/ac586a},
url = {https://doi.org/10.1088/1361-6544/ac586a},
year = {2022},
month = {may},
publisher = {IOP Publishing},
volume = {35},
number = {6},
pages = {2767},
author = {Agrawal, Siddhant and Nahmod, Andrea R},
title = {{Uniqueness of the 2D Euler equation on a corner domain with non-constant vorticity around the corner}},
journal = {Nonlinearity}
}

@article{ionescu2022axi,
  title={{Axi-symmetrization near point vortex solutions for the 2D Euler equation}},
  author={Ionescu, Alexandru and Jia, Hao},
  journal={Communications on Pure and Applied Mathematics},
  volume={75},
  number={4},
  pages={818--891},
  year={2022},
  publisher={Wiley Online Library}
}

@article{gallagher2005uniqueness,
  title={{Uniqueness for the two-dimensional Navier--Stokes equation with a measure as initial vorticity}},
  author={Gallagher, Isabelle and Gallay, Thierry},
  journal={Mathematische Annalen},
  volume={332},
  number={2},
  pages={287--327},
  year={2005},
  publisher={Springer}
}

@article{pausader2024stability,
  title={{Stability of a point charge for the repulsive Vlasov--Poisson system}},
  author={Pausader, Beno{\^\i}t and Widmayer, Klaus and Yang, Jiaqi},
  journal={Journal of the European Mathematical Society},
  year={2024},
  publisher={EMS Press}
}

@article{marchioro2011cauchy,
  title={{The Cauchy problem for the 3-D Vlasov--Poisson system with point charges}},
  author={Marchioro, Carlo and Miot, Evelyne and Pulvirenti, Mario},
  journal={Archive for rational mechanics and analysis},
  volume={201},
  number={1},
  pages={1--26},
  year={2011},
  publisher={Springer}
}

@article{meyer2025long,
  title={Long time confinement of multiple concentrated vortices},
  author={Meyer, David},
  journal={arXiv preprint arXiv:2506.01477},
  year={2025}
}

@article{vishik2018a,
  title={{Instability and non-uniqueness in the Cauchy problem for the Euler equations of an ideal incompressible fluid. Part I}},
  author={Vishik, Misha},
  journal={arXiv preprint arXiv:1805.09426},
  year={2018}
}

@article{vishik2018b,
  title={{Instability and non-uniqueness in the Cauchy problem for the Euler equations of an ideal incompressible fluid. Part II}},
  author={Vishik, Misha},
  journal={arXiv preprint arXiv:1805.09440},
  year={2018}
}

@article{grotto2022burst,
  title={{Burst of point vortices and non-uniqueness of 2D Euler equations}},
  author={Grotto, Francesco and Pappalettera, Umberto},
  journal={Archive for Rational Mechanics and Analysis},
  volume={245},
  number={1},
  pages={89--125},
  year={2022},
  publisher={Springer}
}

@article{brue2026flexibility,
  title={{Flexibility of two-dimensional Euler flows with integrable vorticity}},
  author={Brue, Elia and Colombo, Maria and Kumar, Anuj},
  journal={Duke Mathematical Journal},
  volume={175},
  number={9},
  pages={1593--1647},
  year={2026},
  publisher={Duke University Press}
}

@incollection{aalto2009maximal,
  title={{Maximal functions in Sobolev spaces}},
  author={Aalto, Daniel and Kinnunen, Juha},
  booktitle={Sobolev Spaces In Mathematics I: Sobolev Type Inequalities},
  pages={25--67},
  year={2009},
  publisher={Springer}
}

@book{grafakos2008classical,
  title={Classical fourier analysis},
  author={Grafakos, Loukas and others},
  volume={2},
  publisher={Springer}
}

@book {MajdaBertozzi,
    AUTHOR = {Majda, Andrew J. and Bertozzi, Andrea L.},
     TITLE = {Vorticity and incompressible flow},
    SERIES = {Cambridge Texts in Applied Mathematics},
    VOLUME = {27},
 PUBLISHER = {Cambridge University Press, Cambridge},
      YEAR = {2002},
     PAGES = {xii+545},
      ISBN = {0-521-63057-6; 0-521-63948-4},
   MRCLASS = {76-02 (35Q30 35Q35 76B03 76D03 76D05)},
  MRNUMBER = {1867882},
MRREVIEWER = {Yuxi\ Zheng},
}

@article{starovoitov1994uniqueness,
  title={Uniqueness of a solution to the problem of evolution of a point vortex},
  author={Starovoitov, Victor Nikolayevich},
  journal={Siberian Mathematical Journal},
  volume={35},
  number={3},
  pages={625--630},
  year={1994},
  publisher={Springer}
}

@article{galeati20262d,
  title={{The 2D Euler equations are well-posed for generic initial data in $L^2$}},
  author={Galeati, Lucio},
  journal={arXiv preprint arXiv:2604.14100},
  year={2026}
}

@article{glass2016motion,
  title={On the motion of a small light body immersed in a two dimensional incompressible perfect fluid with vorticity},
  author={Glass, Olivier and Lacave, Christophe and Sueur, Franck},
  journal={Communications in Mathematical Physics},
  volume={341},
  number={3},
  pages={1015--1065},
  year={2016},
  publisher={Springer}
}

@article{mcowen2022perfect,
  title={{Perfect fluid flows on $\mathbb{R}^d$ with growth/decay conditions at infinity}},
  author={McOwen, Robert and Topalov, Peter},
  journal={Mathematische Annalen},
  volume={383},
  number={3},
  pages={1451--1488},
  year={2022},
  publisher={Springer}
}

@article{cobb2026unbounded,
  title={{Unbounded Yudovich Solutions of the Euler Equations}},
  author={Cobb, Dimitri and Koch, Herbert},
  journal={Archive for Rational Mechanics and Analysis},
  volume={250},
  number={3},
  pages={45},
  year={2026},
  publisher={Springer}
}

@article{loeper2006uniqueness,
  title={{Uniqueness of the solution to the Vlasov--Poisson system with bounded density}},
  author={Loeper, Gr{\'e}goire},
  journal={Journal de math{\'e}matiques pures et appliqu{\'e}es},
  volume={86},
  number={1},
  pages={68--79},
  year={2006},
  publisher={Elsevier}
}

@article{elgindi2020symmetries,
  title={Symmetries and critical phenomena in fluids},
  author={Elgindi, Tarek M and Jeong, In-Jee},
  journal={Communications on Pure and Applied Mathematics},
  volume={73},
  number={2},
  pages={257--316},
  year={2020},
  publisher={Wiley Online Library}
}

@article{elgindi2023wellposedness,
  title={{Wellposedness and singularity formation beyond the Yudovich class}},
  author={Elgindi, Tarek M and Murray, Ryan W and Said, Ayman R},
  journal={arXiv preprint arXiv:2312.17610, to appear in JEMS},
  year={2023}
}

@article{jeong2024logarithmic,
  title={{Logarithmic spirals in 2D perfect fluids}},
  author={Jeong, In-Jee and Said, Ayman R},
  journal={Journal de l’{\'E}cole polytechnique—Math{\'e}matiques},
  volume={11},
  pages={655--682},
  year={2024}
}

@article{lacave2019euler,
  title={{The Euler Equations in Planar Domains with Corners}},
  author={Lacave, Christophe and Zlato{\v{s}}, Andrej},
  journal={Archive for Rational Mechanics and Analysis},
  volume={234},
  number={1},
  pages={57--79},
  year={2019},
  publisher={Springer}
}

@article{han2021euler,
  title={{Euler Equations on General Planar Domains}},
  author={Han, Zonglin and Zlato{\v{s}}, Andrej},
  journal={Annals of PDE},
  volume={7},
  number={2},
  pages={20},
  year={2021},
  publisher={Springer}
}

@article{crippa,
title = {{Lagrangian solutions to the Vlasov-Poisson system with a point charge}},
journal = {Kinetic and Related Models},
volume = {11},
number = {6},
pages = {1277-1299},
year = {2018},
issn = {1937-5093},
doi = {10.3934/krm.2018050},
author = {Gianluca Crippa and Silvia Ligabue and Chiara Saffirio}
}

@article{crippa2,
title = {Flows of vector fields with point singularities and the vortex-wave system},
journal = {Discrete and Continuous Dynamical Systems},
volume = {36},
number = {5},
pages = {2405-2417},
year = {2016},
issn = {1078-0947},
doi = {10.3934/dcds.2016.36.2405},
url = {https://www.aimsciences.org/article/id/820b66a3-4e25-4212-9795-3d297a6115bb},
author = {Gianluca  Crippa and Milton C. Lopes  Filho and Evelyne  Miot and Helena J. Nussenzveig  Lopes}
}

@book{mayza,
  title={Sobolev spaces},
  author={Maz'ya, Vladimir},
  year={2013},
  publisher={Springer}
}

\end{document}